\theoremstyle{plain}
\newtheorem{tpcl}{Tpcl}[section]
\newtheorem{lemma}[tpcl]{Lemma} 
\newtheorem{theorem}[tpcl]{Theorem} 
\newtheorem{proposition}[tpcl]{Proposition} 
\newtheorem{corollary}[tpcl]{Corollary} 
\theoremstyle{definition}
\newtheorem{definition}[tpcl]{Definition} 
\newtheorem{example}[tpcl]{Example} 
\newtheorem{examples}[tpcl]{Examples} 
\newtheorem{remark}[tpcl]{Remark}
\DeclareMathOperator{\Coin}{Coin}
\DeclareMathOperator{\lcm}{lcm}
\DeclareMathOperator{\ind}{ind}
\DeclareMathOperator{\Ind}{Ind}
\DeclareMathOperator{\im}{Im}
\DeclareMathOperator{\id}{id}
\newcommand{\Reid}{\mathcal R}
\newcommand{\R}{\mathbb R}
\newcommand{\Z}{\mathbb Z}
\title{A Nielsen theory for coincidences of iterates}
\author{Philip R. Heath\thanks{Department of Mathematics, Memorial University, St. John's, NL, Canada A1C 5S7} \and Christopher Staecker\thanks{Department of Mathematics and Computer Science, Fairfield University, Fairfield, CT, USA 06824}}
\begin{document}

\maketitle

\begin{abstract} 
As the title suggests, this paper gives a Nielsen theory of coincidences of iterates of two self maps 
$f, g : X \to X$ of a closed manifold $X$. The ideas is, as much as possible, to generalize Nielsen type periodic point theory, but there are many obstacles. Many times we get similar results to the 
 \lq \lq classical ones" 
in Nielsen periodic point theory, but with stronger hypotheses. 
\end{abstract} 
\section{Introduction} This paper seeks to generalize Nielsen periodic point
theory to iterates of coincidences. There is much written in the literature concerning 
both Nielsen coincidence theory, and Nielsen periodic point theory. 
Let $f, g : X \to Y$ be maps (continuous functions) of closed manifolds 
$X$ and $Y$ of the same dimension. We use the symbol
$\Phi(f,g)$ to 
denote the set of coincidences of $f$ and $g$, that is $\Phi(f,g)= \{ x \in X\mid f(x) = g(x) \}$. 
The aim of Nielsen coincidence theory is to define a lower bound (as sharp as possible), for the set $M\Phi(f,g)$,  which is defined to be 
$\min(\#\{\Phi(f_1, g_1)\mid f_1 \simeq f, g_1 \simeq g\})$, where $\simeq$ denotes homotopy, and $\#$ cardinality. In a similar setting for a fixed positive integer $n$, 
Jiang (\cite{bojujiang}) introduced two Nielsen type numbers $NP_n(f)$ and $N\Phi_n(f)$ where
$f$ is a self map of $X$. These two numbers are
homotopy invariant lower bounds respectively for the number 
$MP_n(f)$, which is the cardinality
of the smallest among the sets $P_n(f_1) = \Phi(f_1^n) - \bigcup_{m|n \ m\neq n}\Phi(f_1^m),$ 
as $f_1$ ranges over all maps homotopic to $f$, and of 
$M\Phi_n(f)=\min\{\#\Phi(f_1^n)\mid f_1\simeq f \}$ where 
$\Phi(f_1^n)= \{ x \in X\mid f_1^n(x) = x \}$
is the fixed point set of $f_1^n$. It is important in the definitions of $MP_n(f)$ and
$M\Phi_n(f)$, 
to note that we only allow homotopies of $f$,  not of $f^n$. 

In this paper then, we define 
two Nielsen type numbers $NP_n(f,g)$ and $N\Phi_n(f, g)$ (in the context that $X=Y$), 
both of which are homotopy invariant lower bounds for the cardinalities of 
appropriate sets. The first of these $MP_{n}(f,g)$, of which $NP_n(f,g)$ is a lower bound,
is a straight forward generalization of $MP_n(f)$. In fact 
$$ MP_{n}(f,g) = \min \# \{P_{n}(f_1,g_1) | f_1 \simeq f \mbox{ and } g_1 \simeq g\},$$ where  $P_{n}(f,g)$ denotes the set of points $x$ with $f^n(x) = g^n(x)$ but $f^m(x) \neq g^m(x)$
 for any $m\mid n$. We will often write that $x$ is a coincidence \emph{at level $n$} when $f^n(x) = g^n(x)$.

Before we discuss the number of which $N\Phi_n(f, g)$ is a lower bound, we ask the reader 
to observe the following equalities which hold automatically in periodic point theory
$$\Phi (f^n)= \bigcup_{m \mid n} P_m(f) = \bigcup_{m \mid n} \Phi(f^m).$$
Note also that the $P_m(f)$ are 
disjoint. When it comes
to coincidences of iterates, as we will see, the above equalities that we take for granted 
in periodic point theory do not universally hold true. 
We will show for coincidences of iterates, 
that there are examples that variously
illustrate the following possibilities:
\begin{equation}\label{unions}
\Phi (f^n, g^n)\neq \bigcup_{m \mid n} \Phi(f^m, g^m) =\bigcup_{m \mid n} P_m(f, g)
\neq \bigsqcup_{m \mid n} P_m(f, g), 
\end{equation}
where $\bigsqcup$ denotes disjoint union. We will see immediately 
an example where
$\Phi (f^n, g^n)$ and $\bigcup_{m \mid n} \Phi(f^m, g^m)$ can be very different. 
However it is not until
Section \ref{nphisection} that we will see an example where $\bigcup_{m \mid n} P_m(f, g)
\neq \bigsqcup_{m \mid n} P_m(f, g) $. To put this last statement
another way, we are saying  for
coincidences of iterates,
that the $P_m$ may not be disjoint. 
The middle equality of the display \eqref{unions} is of course, always true. 

The differences shown in the numbered display above 
give us three possibilities for defining the minimum number
$M\Phi_n(f,g)$ of which $N\Phi_n(f,g)$ is to be a homotopy invariant
a lower bound. 
Perhaps counter intuitively we define it to be: 
$$M\Phi_n(f,g):= \min \# \left\{ \bigsqcup_{m|n} P_{m}(f_1,g_1) \mid f_1 \simeq f \mbox{ and } g_1 \simeq g\right\}. $$
We will not discuss the technical reason
why we choose to minimize the disjoint union rather than the ordinary union
until Section \ref{nphisection}. However we feel it is instructive
to give  an example now, 
to show  why we need to  reject the other possible candidate
$$MC_n(f, g) := \min\{\#\Phi(f_1^n, g_1^n) \mid f_1\simeq f \mbox{ and } g_1\simeq g\}, $$ 
for the number  
 $M\Phi_n(f,g)$.

In fact, the example below also allows us to 
introduce the reader to some of the other 
hurdles one has to overcome in order to generalize
Nielsen periodic point theory to our context. It  illustrates a 
key difference in the behaviour 
of coincidences of iterates over that of periodic points. In particular,  if
$x$ is a periodic point of $f$, then so is $f^j(x)$ for any positive integer $j$. On 
the other hand, if $x$ is a 
coincidence of $f$ and $g$, there is no guarantee that it is a coincidence 
of $f^n$ and $g^n$ for for any $n>1$.
\begin{example} \label{mcexample}
Let $f$ be the self map of $S^1$ of degree $-1$, 
and let $g$ be a small rotation: $g(e^{i\theta}) = e^{i(\theta +\epsilon)}$ 
for some small $\epsilon \neq 0$. Since $N(f,g) = 2$, it is easy to see that
the pair $f,g$
has two nonremovable coincidence points at level 1. On the other hand
$\Phi(f^2,g^2) = \emptyset$, that is, 
there are no coincidence points of $f^2$ and $g^2$ at all. 

To see this, note that if $f^2(e^{i\theta}) = g^2(e^{i\theta})$, then since
$ f^2(e^{i\theta}) = e^{i\theta}$, and $g^2(e^{i\theta}) =
e^{i(\theta + 2\epsilon)}$, then $\theta
= \theta + 2 \epsilon + 2k \pi$, which is impossible for small epsilon.
Thus for this example 
$\Phi( f^2, g^2) = \emptyset$, while $ \bigcup_{m \mid 2} \Phi(f^m, g^m)= \Phi(f,g)$ has
two elements. Furthermore
$M\Phi_2(f,g) = 
\min \# \{ P_1(f_1, g_1) \sqcup P_2(f_1, g_1) \mid f_1 \simeq f, g_1 \simeq g \} = 2$ and
$MC_2(f,g) =\min \{ \# \Phi(f_1^2, g_1^2) \mid f_1 \simeq f, g_1 \simeq g \} = 0.$
As we will see when we have defined $N\Phi_n(f,g)$ this example shows that we can have 
$MC_n(f,g) < N\Phi_n(f,g)$.  The point then, is that 
$MC_n(f,g)$ can fail
to account for coincidences at levels other than $n$. 

This example also shows 
(for $\epsilon$  irrational) that the trajectories $\{x, f(x), f^2(x), \dots \}$ and $\{x, g(x), g^2(x), \dots \}$ of a
 coincidence point $x$ can be  infinite. 
\end{example}
At the risk of beating the point to death, the example 
also shows that 
$MC_2(f , 1) \neq M\Phi(f^2)$. This may appear at first sight to contradict the well 
known result of Brooks (\cite{broo71}) that, under mild conditions, 
any variance in the cardinality of a coincidence set can be obtained by varying only one of the maps. The apparent contradiction is resolved when we point out that Brooks' result 
does not apply to iterates of maps when the only homotopies of $f^n$ and $g^n$ that we allow 
come from level 1 homotopies of $f$ and $g$. 

As it turns out, if $f$ and $g$ commute with each other (i.e. $fg= gf$) then the distinctions
in the display \eqref{unions} both disappear. In particular in such cases we have that 
$\Phi( f^m, g^m ) \subset \Phi(f^{qm}, g^{qm})$ for all positive $q$ (see Lemma \ref{commuting1}). However 
restricting to commuting maps and their homotopies would be far too restricive. What 
we do need however (in order even
for our boosting functions to be well defined on 
Reidemeister sets), is that the two induced homomorphisms $f_*$ and $g_*$ commute at the level of the fundamental group $\pi_1(X)$ (see Definition \ref{boostingfunctions}). 
In fact in all our examples the homotopy class of the maps discussed contain representatives 
that commute with each other, and this is more than enough for our purposes. 

The lack of geometric boostings is just one of several roadblocks one needs to 
navigate in the process of generalizing periodic point theory to coincidences 
of iterates. Another  block 
we want to mention here is the fact that orbits don't work. In periodic point theory, 
orbits play an important role in certain examples, but not in fact,
in the vast majority of the spaces we use in our examples here (but see Section \ref{orbitsection}). 
We will discuss this point about 
orbits in more detail in Section \ref{reduciblesection}, but for now let $x\in \Phi(f^n)$ for some map $f$. Then the trajectory
(or orbit) $\{x, f(x), f^2(x), \dots \}$ of $x$ is finite and of length less than $n$. 
However as Example \ref{mcexample} shows, for coincidences, 
the trajectories $\{x, f(x), f^2(x), \dots \}$ and $\{x,g(x),g^2(x),\dots \}$ of a coincidence point $x \in \Phi(f^n,g^n)$
need not be finite, let alone less than or equal to its level $n$ (see also Example
\ref{noorbits2}). 
The net result of all of this is that our theory needs to be defined in terms of
classes, not in terms of depth of orbit, as in periodic point theory (see Definition \ref{NP}). 
For most of the examples we use, this is in fact no disadvantage since the
spaces are the equivalent of being essentially toral (the definition makes no sense for 
coincidences). We refer the reader to a related discussion in Section \ref{orbitsection}. 

We come next to the question of imitating some of the \lq \lq classical" results of
periodic point theory that hold true for all
maps on tori, and more generally on 
nil and certain maps on solvmanifolds (see \cite{hpy, hy, hk1}). These results 
compare $N\Phi_n(f)$ to combinations of the $N(f^m)$ for $m\mid n$, 
and when $N(f^n) \neq 0$, give 
M\"obius inversion type results for $NP_n(f)$. A key property that allows these periodic point 
results to go through is one that is called
essentially reducible to the GCD. In fact all maps on nil and solvmanifolds are essentially
reducible to the CGD (\cite[Corollary 4.12]{hk1}). On the other hand, the analogous property
(essentially coincidence reducible to the GCD) is not even universally 
true on $S^1$ (where the commuting property of
$f_*$ and $g_*$ is automatic). We refer the reader to Example \ref{counterexamplept5}. 

We do however have a new but weaker result, for all maps that 
are \lq \lq coincidence essentially reducible" (Theorem \ref{pseudomob}), and we investigate
conditions for which the full results hold. Our results include a 
characterization of \lq \lq coincidence essentially reducible to the GCD" for maps
on $S^1$ (Theorem \ref{circlegcd}), and show it also holds on all spaces where
(in addition to the 
commutativity of $f_*$ and $g_*$), we have that $g_*$ is 
invertible (Theorem \ref{gcdmatrix}). In particular this is true if the linearization of 
a self map $g$ of a torus is a 
matrix which is invertible over $\Z$. In addition, by giving examples on the Klein 
Bottle, we hint at the generalizations of our findings
to solvmanifolds. We hint,  rather than give full details, since that
is beyond the scope of the paper.

Our theory is of course applicable to the case where $g$ is the constant map, in other words
to roots of iterates. This subject has already been studied by Brown, Jiang and Schirmer in 
\cite{BJS}. One might suspect that many of the results of that paper could be obtained
simply by putting $g$ equal to a constant map in this one. 
This however is not the case. In fact the two 
theories are deeply incompatible. We will explore this in greater detail in Section \ref{rootsection}, 
but for now we wish to mention two things that emphasize this incompatibility. The first is 
that the
root theory in \cite{BJS} is heavily dependent on the various choices of
 the image of the 
constant map \lq \lq $g$". To put this another way, 
 the theory in \cite{BJS} is not homotopy invariant with respect to 
homotopies of $g$. And of course ours is. 
To describe the second incompatibility,
we want to say first, that it should be clear that the concept of
reducibility is foundational to both theories.  However, 
even with respect to 
this very basic concept, the two theories are not the same. To say it in just a few words,
in our work we consider reductions only for  $m\mid n$. In \cite{BJS} they consider 
reductions with $m< n$ but $m \nmid n$. 

We outline the paper as follows. Following this introduction we give a preliminary section
in which among other things, we establish our notation using  
the modified fundamental group approach. We briefly  recall
standard coincidence theory as applied to iterates of self maps.
 We include a short subsection on linearization of maps on tori, illustrating it
 with an example which will be useful later.  Next in Section \ref{iteratesection} 
we discuss the various relationships among the 
iterates,  separating the geometry and the algebra.
In the process we recall 
 some of the basic concepts of Nielsen periodic point theory, and show the
necessary detours and conditions we need to make in order to proceed with our generalization
to coincidences. In Section \ref{npsection}, we define our number $NP_n(f,g)$, give some of its
properties, show where direct generalizations fail,  and introduce conditions under which the 
\lq \lq classical" results can be recovered. In Section \ref{nphisection}, we do the same thing for 
$N\Phi_n(f)$. In Section \ref{invertiblesection} we study coincidence theory of iterates with the additional 
assumption that the map $g$ is invertible. We indicate for essentially toral spaces that with this
 invertibility
condition, our theory and Nielsen periodic point theory for the map $g^{-1}f$ coincide. 
In Section 7 we we discuss the relationship of our theory to
the theory of roots of iterates given in \cite{BJS}.   We close the main part 
of the paper with a 
short section where we discuss two open questions both related to Wecken type considerations. 
The final section of the paper is an appendix 
where we prove on $S^1$, with a pair of maps $f$ and $g$ which can be represented by 
integers $a$ and $b$ respectively, that $f$ and $g$ are coincidence reducible to the gcd 
if and only if $a$ and $b$ are relatively prime.  This property  is foundational to the main 
computational theorems in Sections \ref{npsection} and \ref{nphisection}. 

The authors would like to thank Nathan Jones for helping us 
with the final step in the proof of Theorem \ref{circlegcd}, and Jerzy Jezierski for bringing reference \cite{jg} to our attention.

\section{Preliminaries, Standard Nielsen coincidence theory of iterates of 
self maps}\label{prelim} 
In this section we review standard Nielsen coincidence theory as it applies to iterates of self maps. We use the modified fundamental group approach as 
in \cite{ gh} (see also \cite{hpy,hy}).  In this approach we 
separate the geometry from the algebra, and by assigning an index (or semi-index) to the
Reidemeister classes,  we are able to deal with the possibility of having different
empty classes, the supposed advantage of the covering space approach.  This section then
will establish our notation.  At the end of the section we
remind the reader of the concept of the linearization of a map on  a torus. We will 
later make an 
oblique reference to linearization on solvmanfiolds,  but since we 
consider only the Klein Bottle, we will not go into a lot of detail. 

The literature contains many papers on 
the Nielsen theory of coincidences. In many settings, 
fixed point theory is a special case of coincidence theory where $g$ is taken to be the identity map. At times however, this viewpoint is overly simplistic, and this  is certainly the case 
with  respect to perioidc point theory, and the theory of iterates of coincidences.  There are
places however,  where generalizations  are entirely straight forward. Some of these
appear below.

\subsection{Geometric classes of iterates of self maps.}
Throughout the paper all spaces $X$
will denote closed manifolds, and  
$f, g:X\to X$ will be self maps of $X$. 
In this subsection we remind the reader of standard coincidence theory, and make 
some straight forward applications of it to the iterates $f^n$ and $g^n$ of $f$ and $g$
respectively. 

We say that $x,y\in\Phi(f^n, g^n)$ are 
{\it Nielsen equivalent at level $n$} provided that there is a path 
$c$ from $x$ to $y$ so that (relative end points) $f^n(c)\simeq g^n(c)$.
For $n= 1$ this is the ordinary Nielsen coincidence relation. 
The set of equivalence classes thus generated 
will be denoted by $\Phi(f^n, g^n )/\sim$. We call this the 
set of {\it [geometric] Nielsen classes} for $f^n$ and $g^n$. 

Using either the standard coincidence 
index (see for example \cite{vic}) or the semi-index in case $X$ is
not orientable
(see \cite{dj93,heathfibresurvey}), we may for each 
geometric class ${\bf A}^n \in \Phi(f^n,g^n)/\sim$ associate an integer
denoted $\ind({\bf A}^n)$. 
The classes for which this integer is nonzero are called 
{\it essential} Nielsen classes.
For any positive integer $n$, 
the {\it Nielsen number} $N(f^n, g^n)$ of $f^n$ and $g^n$ is then the 
number of essential classes of $f^n$ and $g^n$. This number is a lower bound for 
$M\Phi(f^n, g^n)$ which should be carefully distinguished from 
$M\Phi_n(f,g)$. 
For $M\Phi(f^n, g^n)$ we consider homotopies that range over all maps $h$ and $k$ that are homotopic respectively to
$f^n$ and $g^n$. Of course this will include homotopies induced by
homotopies
$f\simeq f_1$ and $g \simeq g_1$ of $f$ and $g$ respectively. 
This last kind of homotopy, and this kind only,
is the kind we consider in the main body of the paper. 
When $g$ is the identity and we disallow homotopies of $g$, then the number of essential classes of $f$ denoted $N(f)$, is the ordinary Nielsen number of $f$. 
\subsection{Algebraic and geometric classes and their relationship. }
We come now to the algebraic side of the story. 
In what follows we shall not distinguish between a path and its path 
class 
in the fundamental groupoid $\pi_1(X)$.
Thus $c$ can denote both a path and a path class in $\pi_1(X)$.
In addition 
if $h: X \to X$ is a map, $h(c)$ will denote either a path or class.
If $c$ is a path from $a$ to $b$,
then $c^{-1}$ is the path from $b$ to $a$ defined by $c^{-1}(t) = c(1-t)$.

Choose
a base point $x_0\in X$. For simplicity we work with base point preserving maps
$f, \ g :X \to X$ with respect to this chosen base point. 
We can do this without loss of generality, since in
manifolds base points are always closed and non-degenerate. In particular
each homotopy class
has a representative that is base point preserving with respect to the chosen base point. 

In this way,  for each positive integer $n$
we have induced homomorphisms 
$f_*^n, \ g_*^n: \pi_1(X,x_0) \to \pi_1(X,x_0)$, and these in turn 
determine an equivalence relation on $\pi_1(X,x_0)$ 
(doubly-twisted conjugacy) defined by the rule that
$\alpha\sim\beta$ in $\pi_1(X,x_0)$ if and only if 
there exists $\gamma\in\pi_1(X,x_0)$ with 
$\alpha = g_*^{n}(\gamma)\beta f_*^{n}(\gamma^{-1})$. The
resulting
classes are called Reidemeister classes.
The Reidemeister class containing $\alpha$ is denoted by 
$[\alpha]^n$. 
The set of all Reidemeister classes is denoted by 
${\cal R}(f_*^{n}, g_*^{n})$, and its cardinality is the \emph{Reidemeister number} $R(f^n,g^n)$. There is an exact sequence of based sets,
\[
\pi_{1} (X, x_0) \longrightarrow
\pi_{1}(X, x_0) \stackrel{j_n}{\longrightarrow}
{\cal R}(f^{n}_*, g^{n}_* ) \to 1
\]
where the first function 
takes an element $\alpha$ to 
$g_*^{n}(\alpha) f_*^{n}(\alpha^{-1})$, and the second $j_n$ places an
element 
$\beta$ in its Reidemeister class $[\beta]^n$.
If $\pi_1(X,x_0)$ is abelian there is a canonical group structure on 
${\cal R}(f_*^{n}, g_*^{n})$,
moreover in this case 
the sequence consists of 
groups and homomorphisms.
All the above constructions are independent of the choice of
base point and path classes in the sense that there exists bijections
between the various Reidemeister sets 
(see \cite{gh}). When $\pi_1(X)$ is Abelian we write composition of functions
additively, and we have:
\begin{theorem} \label{abelianexactseq}
\label{1} (Guo Heath \cite{gh, heathfibresurvey} )
Let $f, \ g: X \to X$ be maps with 
$\pi_1(X,x_0)$ Abelian, then the sequence
\[
0 \to \Coin(f_*^{n}, g_*^{n}) \to \pi_1(X,x_0)
\stackrel{ g_*^{n} - f^{n}_*}{\longrightarrow} \pi_1(X,x_0)
\stackrel{j_n}{\to} {\cal R}(f_*^{n}, g_*^{n}))\to 0
\]
is an exact sequence of Abelian groups and homomorphisms, where 
\[ 
 \ \ \ \ \ \ \ \ \ \ \ \   \ \ \ \ \ \ \ \ \ \ \ \
\Coin(f_*^n, g_*^n) = \{ \alpha \in \pi_1(X,x_0) \mid f_*^n(\alpha) = g_*^n(\alpha) \}. 
 \ \ \ \ \ \ \ \ \ \ \ \  \ \ \ \ \ \ \ \ \ \ \ \
{\Box}
\]
\end{theorem}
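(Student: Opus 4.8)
The plan is to deduce this from the general based-set exact sequence already displayed just before the statement, by exploiting the fact that when $\pi_1(X,x_0)$ is Abelian all the objects involved become groups and all the maps homomorphisms. First I would recall that sequence: the composite $\pi_1 \to \pi_1 \to \Reid(f_*^n,g_*^n) \to 1$, where the first map is $\alpha \mapsto g_*^n(\alpha)f_*^n(\alpha^{-1})$ and the second is $j_n$. In the Abelian case, writing the group operation additively, the first map becomes $\alpha \mapsto g_*^n(\alpha) - f_*^n(\alpha) = (g_*^n - f_*^n)(\alpha)$, which is manifestly a group homomorphism; and the doubly-twisted conjugacy relation $\alpha \sim \beta \iff \alpha = g_*^n(\gamma) + \beta - f_*^n(\gamma)$ collapses to $\alpha - \beta \in \im(g_*^n - f_*^n)$, so $\Reid(f_*^n,g_*^n) = \pi_1(X,x_0)/\im(g_*^n - f_*^n)$ is a genuine quotient group and $j_n$ is the quotient homomorphism. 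That gives exactness at the right-hand $\pi_1$ and surjectivity of $j_n$ for free.

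Next I would identify the kernel of $g_*^n - f_*^n$. By definition $\ker(g_*^n - f_*^n) = \{\alpha \mid g_*^n(\alpha) - f_*^n(\alpha) = 0\} = \{\alpha \mid f_*^n(\alpha) = g_*^n(\alpha)\}$, which is precisely $\Coin(f_*^n,g_*^n)$ as displayed in the theorem. This is a subgroup of $\pi_1(X,x_0)$ because it is the kernel of a homomorphism between Abelian groups. So prepending $0 \to \Coin(f_*^n,g_*^n) \to \pi_1(X,x_0)$ with the inclusion makes the sequence exact at the first $\pi_1(X,x_0)$ as well. Exactness at the middle $\pi_1(X,x_0)$ — i.e.\ $\im(\text{inclusion into first factor's image}) = \ker j_n$ — is the statement $\ker j_n = \im(g_*^n - f_*^n)$, which is exactly what the quotient description of $\Reid(f_*^n,g_*^n)$ in the previous paragraph gives. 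Finally, $0$ on the right records the surjectivity of $j_n$, already noted. Stringing these four exactness facts together yields the claimed four-term exact sequence of Abelian groups and homomorphisms.

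The only genuine content beyond bookkeeping is checking that the constructions are compatible with the choice of base point and path classes, and that the pre-existing based-set sequence (cited from \cite{gh}) is correctly interpreted additively; but since the statement explicitly attributes the result to Guo--Heath and the based-set version is quoted in the excerpt immediately above, I would simply invoke it rather than reprove it. So I do not expect a hard step here: the argument is essentially ``an exact sequence of based sets whose maps happen to be homomorphisms onto Abelian targets is an exact sequence of Abelian groups,'' together with the two identifications $\ker(g_*^n-f_*^n)=\Coin(f_*^n,g_*^n)$ and $\Reid(f_*^n,g_*^n)=\pi_1/\im(g_*^n-f_*^n)$. If anything needs care it is making sure the indexing $n$ plays no role — it does not, since everything is functorially applied to the single pair of homomorphisms $(f_*^n, g_*^n)$ — so one could equally well prove the $n=1$ case and substitute $f_*^n, g_*^n$ for $f_*, g_*$ throughout.
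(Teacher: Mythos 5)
Your proposal is correct, and since the paper itself gives no proof of this theorem (it simply cites Guo--Heath and marks the statement with $\Box$), there is no internal argument to compare against. Your reconstruction---read off the based-set exact sequence already displayed in the paper, observe that in the Abelian case the map $\alpha \mapsto g_*^n(\alpha)f_*^n(\alpha^{-1})$ becomes the homomorphism $g_*^n - f_*^n$, identify its kernel with $\Coin(f_*^n,g_*^n)$ and note that twisted conjugacy collapses to cosets of its image so that $\Reid(f_*^n,g_*^n)$ is the quotient group and $j_n$ the quotient map---is exactly the standard argument and is what the cited source proves.
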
 

The algebraic and geometric components of the theory are related 
by an injective function $$\rho_n=\rho:\Phi(f^n, \ g^n)/\sim \ \to 
{\cal R}(f_*^{n}, g^n_*)$$ 
defined as follows:
Given $x\in{\bf A}^n$ we choose 
a path $c$ from the base point $x_0$ to $x$. We can then 
define $\rho_n({\bf A}^n)=[g^n(c)f^n(c^{-1})]^n$. 
This will be independent of $c$ and of the choice of $x$ within ${\bf A}^n$. 
An algebraic class $[\alpha]^n$ is said to be {\it nonempty} 
if it lies in the image of $\rho_n$. Following the modified fundamental group approach
as in (\cite{gh, heathfibresurvey, hpy, hy}), we next  assign an index (or semi-index for our
Klein Bottle examples)
to the Reidemeister classes.
The index (semi-index) $\Ind([\alpha]^n)$ of a class 
$[\alpha]^n \in {\cal R}(f_*^n, g_*^n)$ 
is defined as follows 
$$\Ind([\alpha]^n) = \left\{ \begin{array}{ll} 
\ind({\bf A}^n) & \mbox{if} \ \ [\alpha]^n = \rho_n({\bf A}^n), \\ 
0 & \mbox{otherwise,} 
\end{array} \right. 
$$ 
where $\ind({\bf A}^n)$ is the integer defined in the geometric section, the usual coincidence index, or the coincidence semi-index of \cite{dj93}.
As with the geometric classes, an algebraic class is {\it essential} provided 
it has nonzero index (semi-index). 
We denote the set of essential algebraic classes by
$ {\cal E}(f^n, g^n ) \subseteq {\cal R}(f_*^n, g_*^n)$. Clearly $N(f^n, g^n) = 
\#({\cal E}(f^n, g^n ))$. 

\subsection{Linearization and weakly Jiang maps.}
Although most of our examples in this paper will be on tori, we
we will be using the Klein Bottle as a kind of representative solvmanifold. We will not 
go into details of linearizations of maps on these spaces, but refer the reader to 
\cite{hk1, hk2, hk3} for the fixed point case, and to \cite{heathfibresurvey} for
some aspects of the coincidence case. Linearization of maps on tori are
very simple. If $f: T^r \to T^r$ is a map of an $r$ torus, then we can identify the linearization 
of $f$ with the induced homomorphism on $\pi_1(T^r) \cong \Z^r$. 
Using the standard basis for $\Z^r$ we can then identify this homomorphism with a matrix $F$. This same matrix can then be used to define a map in the homotopy class of $f$, namely that map 
which is induced from $F:\R^r \to \R^r$ defined by matrix multiplication on 
vectors. 
\begin{theorem}[Jezierski, \cite{jez90} Lemma 7.3]\label{toriformula}
Let $f, g: T^r \to T^r$ be maps of the $r$ torus with linearizations $F$ and $G$ respectively.
Then 
$N(f,g) = |\det(G - F)|$. If $\det(G - F) \neq 0$ then the linear maps
$F$ and $G$ have $\#(\Phi(F,G)) = N(f,g)$. \hfill ${\Box}$
\end{theorem}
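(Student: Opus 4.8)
The plan is to reduce the statement to the linear models $\bar F,\bar G\colon T^r\to T^r$ induced by the integer matrices $F$ and $G$. Every self-map of $T^r$ is homotopic to its linearization, and both $N(\cdot,\cdot)$ and the Reidemeister number are homotopy invariant, so it suffices to prove everything for $\bar F$ and $\bar G$. Writing $T^r=\R^r/\Z^r$ and using the group structure, I will work with the single self-map $\phi\colon T^r\to T^r$ induced by the integer matrix $G-F$; then $\Phi(\bar F,\bar G)=\phi^{-1}(0)$, and more generally $\Phi(\bar F,\bar G_v)=\phi^{-1}(-v)$, where $\bar G_v(x)=Gx+v$ is the translate of $\bar G$ by $v\in T^r$ (homotopic to $\bar G$ via the straight-line homotopy).

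First I would dispose of the degenerate case $\det(G-F)=0$. Since $G-F$ has integer entries, its image $V=(G-F)\R^r$ is a proper \emph{rational} subspace, so $\phi(T^r)=(V+\Z^r)/\Z^r$ is a proper, hence closed and nowhere dense, subtorus. Picking any $v\notin\phi(T^r)$ gives $\Phi(\bar F,\bar G_v)=\phi^{-1}(-v)=\emptyset$, so $N(f,g)=N(\bar F,\bar G_v)=0=|\det(G-F)|$.

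Now assume $\det(G-F)\neq 0$. Applying Theorem~\ref{abelianexactseq} with $n=1$ identifies $\mathcal R(f_*,g_*)$ with $\Z^r/(G-F)\Z^r$, so $R(f,g)=[\Z^r:(G-F)\Z^r]=|\det(G-F)|$. Geometrically, $\bar F(\bar x)=\bar G(\bar x)$ precisely when a lift $x$ satisfies $(G-F)x\in\Z^r$, so $\Phi(\bar F,\bar G)=L/\Z^r$ with $L=(G-F)^{-1}(\Z^r)\supseteq\Z^r$; since $G-F$ carries $L$ isomorphically onto $\Z^r$ and $\Z^r$ onto the index-$|\det(G-F)|$ sublattice $(G-F)\Z^r$, it follows that $\#\Phi(\bar F,\bar G)=[L:\Z^r]=|\det(G-F)|$. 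Combined with the equality $N(f,g)=|\det(G-F)|$ established below, this gives the second assertion of the theorem. In particular $\Phi(\bar F,\bar G)$ is finite, so every coincidence point is isolated. Since $\rho_1$ is injective with source and target both of cardinality $|\det(G-F)|$, it is a bijection: every Reidemeister class is nonempty and contains exactly one coincidence point. Finally, in lifted local coordinates near such a point the difference map $\phi$ is affine with invertible linear part $G-F$, so its local coincidence index is $\operatorname{sign}\det(G-F)=\pm 1\neq 0$; hence every class is essential and $N(f,g)=|\det(G-F)|$.

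The only step I expect to require genuine care is the last one: matching the chosen normalization of the index — here just the ordinary coincidence index, since $T^r$ is orientable, so no semi-index is needed — to the sign of $\det(G-F)$ for an isolated linear coincidence point. Everything else is homotopy invariance together with elementary lattice bookkeeping.
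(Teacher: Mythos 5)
The paper states this theorem as a citation (Jezierski, Lemma 7.3 of \cite{jez90}) and gives no proof of its own --- the $\Box$ follows the statement immediately --- so there is no internal argument to compare yours against. Your proof is a correct self-contained argument and, as far as I can tell, follows the standard route that Jezierski's original proof also takes: reduce to linear models, compute the Reidemeister number as the index $[\Z^r:(G-F)\Z^r]=|\det(G-F)|$ via the exact sequence of Theorem~\ref{abelianexactseq}, identify the geometric coincidence set with the finite quotient $L/\Z^r$ where $L=(G-F)^{-1}\Z^r$, observe that $[L:\Z^r]=[\Z^r:(G-F)\Z^r]$ because $G-F$ carries the inclusion $\Z^r\subset L$ isomorphically onto $(G-F)\Z^r\subset\Z^r$, and then use the local linearity to see each isolated coincidence has index $\pm 1$. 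The degenerate case, where $\det(G-F)=0$ makes the image a proper rational subtorus so a translate of $\bar G$ kills all coincidences, is also handled correctly. You rightly flag the one point requiring care --- matching the index normalization to $\operatorname{sign}\det(G-F)$ --- and since $T^r$ is orientable the ordinary coincidence index does indeed coincide with the local degree of the difference map, so that step is fine. No gaps.
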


For maps $f, g$ of tori, we will as above often 
identify $f$ with $F$ and $g$ with $G$, and write $N(F,G)$ for $N(f,g)$.
\begin{definition}\label{weak j}
We say that a pair $f$, $g$ is \emph{coincidence weakly Jiang} provided that either
$N(f, g)=0$ or else $N(f, g)= R(f, g)$. If all pairs of maps on a space are coincidence weakly Jiang, we say that
the space itself is coincidence weakly Jiang. 
\end{definition} 

Recall that a Jiang space is one for which the induced map
$x_0^* : \pi_1( X^X, 1_X) \to \pi_1(X, x_0)$ is surjective. These spaces have the property that if the Lefschetz number $L(f,g)=0$, then $N(f,g)=0$, and otherwise $N(f,g)=R(f,g)$. A Jiang space will be coincidence weakly Jiang, and in particular tori are coincidence weakly Jiang.
On the other hand, there are many pairs of maps that will be coincidence 
weakly Jiang, where the spaces are 
not actual Jiang spaces. Our primary example of this  phenomenon occurs 
on the Klein bottle (see Example \ref{KB}). 
\begin{example}\label{counterexamplept1} 
Let $f,g:S^1 \to S^1$ be maps of degree 6 and 2 respectively.
For maps on circles our fundamental groups will be $\Z$, and our maps
$f_*, g_*$ are multiplication by 6 and 2, respectively. By exactness in Theorem \ref{abelianexactseq} we have
$\Reid(f^n_*,g^n_*) \cong \Z_{|6^n-2^n|}$, and since $S^1$ is a Jiang space and 
$L(6^n, 2^n) \neq 0$ we have 
$${\cal R}(f_*,g_*) \cong \Z_4, \ {\cal R}(f_*^2, g_*^2) \cong \Z_{32},
\quad {\cal R}(f_*^3,g_*^3) \cong \Z_{208}, \mbox{ and } {\cal R}(f_*^6, g_*^6)
\cong \Z_{46592}, $$
with respective Nielsen numbers $4$, $32$, $208$ and $46592$. 
\end{example}
\section{Relations among iterates. Geometric and Algebraic reductions. }\label{iteratesection}
We will be giving the coincidence analogues of the foundational 
definitions in periodic point 
theory  in this section. When it is inscapable we will also remind the reader of
the original periodic point concepts.  This is necessary, since we 
want to compare them with the theory we develop here. Our point is 
to indicate
where
straightforward generalizations of periodic point theory to coincidences
fail and why they fail.  

In order to avoid giving too much detail, we are assuming that the reader has 
a basic familiarity with Nielsen periodic point theory.  In this regard, we would point the 
reader to the survey article  \cite{heathfibresurvey}, which we also use as our main
reference.  We will also,  at times,  
need Jiang's original ground breaking work \cite{bojujiang}, as well as early expositions and expansions of it (\cite{hpy, hy}). 
\subsection{Reducible and irreducible Nielsen classes}\label{reduciblesection}
In the preliminary section we outlined existing coincidence Nielsen theory as it applies 
to iterates of self maps
$f, \ g: X \to X$. However, just as Nielsen periodic point theory is much more than the study of the Nielsen numbers of iterates, so our work is more involved than the study of the 
Nielsen coincidence numbers of  iterates. As mentioned in the introduction, our work is
also complicated by a number of obstacles we encounter in our attempt to
generalize periodic point
theory. In Example \ref{mcexample} we saw a case where 
$$ \Phi (f^n, g^n) \neq \bigcup_{m \mid n} \Phi(f^m, g^m).$$ 
On the other hand, the corresponding equality in periodic point theory always holds. 
In fact, the equality 
holds for coincidences when the two maps commute, but we need to state this
in the following
slightly different form:
\begin{lemma}\label{commuting0} If the self maps $f, g$ of $X$ commute, then for all 
positive integers $q$ we have that
\[ \Phi(f^m, g^m) \subset \Phi(f^{qm}, g^{qm}). \]
In particular if $f(x)=g(x)$ for some $x$ then $f^n(x)=g^n(x)$ for all positive $n$.
\end{lemma}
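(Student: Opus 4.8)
The plan is to reduce the whole statement to an elementary induction, using the commutativity of $f$ and $g$ to slide one map past an iterate of the other.

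First I would record the auxiliary fact that commutativity propagates to iterates: if $fg = gf$, then $f^a g^b = g^b f^a$ for all non-negative integers $a,b$. This is a routine double induction — one first proves $fg^b = g^b f$ by inducting on $b$ (using $fg^b = (fg)g^{b-1} = (gf)g^{b-1} = g(fg^{b-1}) = g(g^{b-1}f) = g^b f$), and then fixes $b$ and inducts on $a$ via $f^a g^b = f(f^{a-1}g^b) = f(g^b f^{a-1}) = (fg^b)f^{a-1} = g^b f^a$.

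Next I would prove the displayed ``in particular'' assertion: if $f(x) = g(x)$, then $f^n(x) = g^n(x)$ for every positive $n$. This is by induction on $n$, the case $n=1$ being the hypothesis. Assuming $f^n(x) = g^n(x)$, compute
\[ f^{n+1}(x) = f\bigl(f^n(x)\bigr) = f\bigl(g^n(x)\bigr) = g^n\bigl(f(x)\bigr) = g^n\bigl(g(x)\bigr) = g^{n+1}(x), \]
where the third equality uses $fg^n = g^n f$ from the first step and the fourth uses $f(x) = g(x)$. Finally, for the inclusion $\Phi(f^m, g^m) \subset \Phi(f^{qm}, g^{qm})$ I would apply this to the pair $(f^m, g^m)$ in place of $(f,g)$: by the first step $f^m$ and $g^m$ commute, and any $x \in \Phi(f^m,g^m)$ satisfies $f^m(x) = g^m(x)$, so $(f^m)^q(x) = (g^m)^q(x)$, i.e.\ $f^{qm}(x) = g^{qm}(x)$, which says $x \in \Phi(f^{qm}, g^{qm})$.

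As for the main obstacle, there isn't a substantive one — the statement is essentially bookkeeping. The only point that genuinely needs attention is to establish the commutation relation $fg^n = g^n f$ (and more generally $f^a g^b = g^b f^a$) \emph{before} attempting the sliding step, since without it the identity $f(g^n(x)) = g^n(f(x))$ is unavailable and the induction does not close; everything else is then forced.
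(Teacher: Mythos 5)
Your proof is correct and takes essentially the same approach as the paper: an induction that exploits the propagation of commutativity to iterates so that one can slide $f^m$ (or $f$) past $g^m$ (or $g^n$). You are slightly more explicit than the paper in first establishing $f^a g^b = g^b f^a$ and in proving the ``in particular'' statement before deriving the general inclusion from it — the paper instead sketches the $q=2$ step directly and calls the rest straightforward — but these are only differences of presentation, not of substance.
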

\begin{proof}
The inductive step starts with the proof that $f^m(x) = g^m(x)$ implies that $f^{2m}(x) = g^{2m}(x)$. The rest is straightforward. 
So let $f^m(x) = g^m(x)$, 
then $f^{2m}(x) = f^m(f^m(x))= f^m(g^m(x))= g^m(f^m(x))= g^m(g^m(x))= g^{2m}(x)$.
\end{proof}

This respects Nielsen equivalence to give:
\begin{lemma}\label{commuting1}
If the self maps $f, g$ of $X$ commute, then
for all $m\mid n$, the inclusion map induces a function 
$\gamma_{m,n}: \Phi(f^m, g^m )/\sim \ \to \Phi(f^n, g^n )/\sim$ which takes the class of $x$ at the $m$th level to the class of $x$ at the $n$th level.
\end{lemma}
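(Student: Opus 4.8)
The plan is to construct $\gamma_{m,n}$ directly from the set-level inclusion $\Phi(f^m, g^m) \subseteq \Phi(f^n, g^n)$ provided by Lemma \ref{commuting0} (using that $m\mid n$, so $n = qm$ for some positive integer $q$), and then check that this inclusion descends to Nielsen classes. So first I would fix $x \in \Phi(f^m, g^m)$; by Lemma \ref{commuting0} we have $x \in \Phi(f^n, g^n)$, so it makes sense to speak of the class of $x$ at level $n$. I would define $\gamma_{m,n}$ on a class $\mathbf{A}^m \in \Phi(f^m, g^m)/\!\sim$ by picking any $x \in \mathbf{A}^m$ and declaring $\gamma_{m,n}(\mathbf{A}^m)$ to be the level-$n$ Nielsen class containing $x$.

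The substance of the proof is well-definedness: if $x, y \in \Phi(f^m, g^m)$ are Nielsen equivalent at level $m$, they must be Nielsen equivalent at level $n$. By definition of the level-$m$ relation there is a path $c$ from $x$ to $y$ with $f^m(c) \simeq g^m(c)$ rel endpoints. I need a path from $x$ to $y$ (I can reuse $c$) with $f^n(c) \simeq g^n(c)$ rel endpoints. Write $n = qm$. The idea is to apply $f^{(q-1)m}$ and $g^{(q-1)m}$ appropriately and use commutativity to identify $f^n(c)$ and $g^n(c)$ with iterated images of the two homotopic paths $f^m(c)$ and $g^m(c)$. Concretely, since $f$ and $g$ commute, $f^n = f^{(q-1)m}\circ f^m$ and also $f^n = g^{(q-1)m} \circ \cdots$ after suitable rearrangement; the cleanest route is to show by induction on $q$ that $f^{qm}(c) \simeq g^{qm}(c)$ rel endpoints whenever $f^m(c) \simeq g^m(c)$ rel endpoints. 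For the inductive step, from $f^{(q-1)m}(c) \simeq g^{(q-1)m}(c)$ and $f^m(c)\simeq g^m(c)$, apply the map $f^m$ (a continuous map preserves homotopies rel endpoints) to the first relation to get $f^m f^{(q-1)m}(c) \simeq f^m g^{(q-1)m}(c)$, i.e. $f^{qm}(c) \simeq f^m g^{(q-1)m}(c) = g^{(q-1)m} f^m(c)$ by commutativity; then apply $g^{(q-1)m}$ to the base relation $f^m(c) \simeq g^m(c)$ to get $g^{(q-1)m} f^m(c) \simeq g^{(q-1)m} g^m(c) = g^{qm}(c)$; concatenating the two homotopies gives $f^{qm}(c) \simeq g^{qm}(c)$ rel endpoints. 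One should double-check that all these homotopies genuinely fix endpoints: that holds because $c$ runs between coincidence points of $f^m$ and $g^m$, hence between coincidence points of all higher commuting iterates by Lemma \ref{commuting0}, so the endpoint images agree throughout.

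Finally I would note the ``takes the class of $x$ to the class of $x$'' clause is immediate from the construction, so there is nothing more to prove there. The main obstacle, such as it is, is bookkeeping with commutativity in the inductive step — making sure the powers of $f$ and $g$ are distributed so that one factor can absorb the base homotopy $f^m(c)\simeq g^m(c)$ and the other can absorb the inductive hypothesis $f^{(q-1)m}(c)\simeq g^{(q-1)m}(c)$; commutativity of $f$ and $g$ (and hence of all their iterates) is exactly what makes this rearrangement legal. Everything else is routine: functoriality of ``apply a continuous map'' on homotopy classes of paths rel endpoints, and concatenation of homotopies.
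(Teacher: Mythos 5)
Your proposal is correct and is precisely the verification the paper leaves implicit: the paper states Lemma \ref{commuting1} without proof, remarking only that the set-level inclusion from Lemma \ref{commuting0} ``respects Nielsen equivalence.'' Your induction on $q = n/m$ — applying $f^m$ to the inductive hypothesis, applying $g^{(q-1)m}$ to the base homotopy, and using commutativity of $f$ and $g$ to reconcile the middle terms — is exactly the well-definedness check the paper omits, and the endpoint bookkeeping via Lemma \ref{commuting0} is handled correctly.
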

\medskip

So unlike the fixed point case the $\gamma_{m,n}$ need not exist, and as Example \ref{mcexample} shows it is not enough that the induced homomorphisms
$f_*$ and $g_*$ commute 
(in that example we have $\pi_1(X) \cong \Z$ and so all maps commute in the algebra). 
This will complicate our discussion of the relationship between 
the algebra, which is homotopy invariant, and the geometry, which is not. 
As in the fixed point case the $\gamma_{m,n}$ need not be injective even when they exist
(replace $g$ in Example \ref{mcexample} with the identity, and consider $n=2$). 

There is another immediate obstacle to our attempt to generalize Nielsen 
periodic point theory. In particular Nielsen periodic point theory of a self map $f$
works with $f$ orbits of classes, rather than 
simply with classes. Recall that the period 
of $x \in \Phi(f^n)$, is the 
smallest positive integer $m\mid n$ such that 
$f^m(x) = x$. The list $\{ x, f(x), \cdots, f^{m-1}(x) \}$ is called {\it the orbit} of $x$. The algebraic orbit is the list $\{ \rho_n([x]^n), \rho_n([f(x)]^n), \cdots \}$, and its
length will divide $m$.
Nielsen periodic point theory counts depth of algebraic orbits, rather than classes. This is
because there can be Nielsen equivalences 
(at level $m$) between different
elements $f^i(x)$ and $f^j(x)$ in the above list. When this happens the algebraic length of the orbit (the number of classes in an orbit counted algebraically)
is shorter than the geometric length of {\it points}, and so counting classes gives an inadequate count of the actual minimum number of points present. This idea is
encapsulated in the following fundamental lemma from \cite[Proposition 1.3]{hpy}, and is
the primary reason we consider orbits in Nielsen periodic point theory rather than classes. 
The notation is taken from \cite{heathfibresurvey}, where the angle brackets 
denote $f_*$ orbits of 
classes. We will be discussing algebraic reductions of coincidence classes later in this section. 
\begin{lemma} \label{fundpplemma} (\cite[Proposition 1.3]{hpy})
In Nielsen periodic point theory, 
the (algebraic) length of an orbit $\langle [\alpha]^n \rangle$ divides its depth 
(the minimum integer to which $\langle [\alpha]^n \rangle$ reduces algebraically). If $\langle [\alpha]^n \rangle$
is essential and has depth $d$, then $\langle [\alpha]^n \rangle$ contains at least $d$ periodic points.
\end{lemma}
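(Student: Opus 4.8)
The plan is to argue entirely inside the Reidemeister sets $\Reid(f^n_*)$ of the iterates, using the three standard devices of Nielsen periodic point theory (see \cite{hpy, heathfibresurvey}). First, the \emph{shift} $f_*\colon\Reid(f^n_*)\to\Reid(f^n_*)$, $[\alpha]^n\mapsto[f_*(\alpha)]^n$, is well defined and $f^n_*=\id$ on $\Reid(f^n_*)$ (indeed $[f^n_*(\alpha)]^n=[\alpha]^n$ on taking the conjugating element to be $\alpha^{-1}$), so every $f_*$-orbit at level $n$ has length dividing $n$. Second, for $m\mid n$ the boosting maps $\gamma_{m,n}\colon\Reid(f^m_*)\to\Reid(f^n_*)$ are natural with respect to the shift, $\gamma_{m,n}\circ f_*=f_*\circ\gamma_{m,n}$, and a class (or orbit) at level $n$ reduces to level $m$ precisely when it lies in the image of $\gamma_{m,n}$; the depth is the least such $m$. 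Third, the shift preserves the index, so all classes in a single $f_*$-orbit share the same index, and an essential class is non-empty for every map homotopic to $f$. I would then treat the two assertions in turn.

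For the divisibility statement, let $d$ be the depth of $\langle[\alpha]^n\rangle$. Some element of the orbit is boosted from level $d$; replacing $[\alpha]^n$ by that element (which does not change the orbit) I may write $[\alpha]^n=\gamma_{d,n}([\beta]^d)$ for some $[\beta]^d\in\Reid(f^d_*)$. Let $\ell$ be the length of $\langle[\alpha]^n\rangle$ and $\ell'$ that of the $f_*$-orbit $\langle[\beta]^d\rangle$ at level $d$; since $f^d_*$ acts trivially on $\Reid(f^d_*)$ we have $\ell'\mid d$. Naturality gives $f_*^k([\alpha]^n)=\gamma_{d,n}(f_*^k([\beta]^d))$ for all $k$, so $f_*^{\ell'}([\beta]^d)=[\beta]^d$ forces $f_*^{\ell'}([\alpha]^n)=[\alpha]^n$, hence $\ell\mid\ell'$ and therefore $\ell\mid d$.

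For the counting statement, fix any $f_1\simeq f$ and let $C_0,C_1,\dots,C_{\ell-1}$ be the classes of the orbit, listed so that $C_i=f_*^i([\alpha]^n)$. Since the shift preserves index, all $C_i$ are essential, hence each contains a point of $\Phi(f_1^n)$. Choose such a point $y_i\in C_i$ and let $m_i$ be its $f_1$-period (a divisor of $n$). The algebraic orbit of $y_i$ starts at $\rho_n([y_i]^n)=C_i$, hence equals the whole orbit $\langle[\alpha]^n\rangle$ and has length $\ell$; since $f_1^{m_i}(y_i)=y_i$ this forces $\ell\mid m_i$. Moreover $\rho_n([y_i]^n)=\gamma_{m_i,n}(\rho_{m_i}([y_i]^{m_i}))$, so $\langle[\alpha]^n\rangle$ reduces to level $m_i$ and therefore $m_i\ge d$. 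Consequently the $f_1$-orbit of $y_i$ meets $C_i$ in the $m_i/\ell$ distinct points $y_i,f_1^{\ell}(y_i),\dots,f_1^{(m_i/\ell-1)\ell}(y_i)$, so $\#(C_i\cap\Phi(f_1^n))\ge m_i/\ell\ge d/\ell$. Summing over $i$ and using $\ell\mid d$ from the first part, the orbit accounts for at least $\ell\cdot(d/\ell)=d$ points of $\Phi(f_1^n)$.

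I do not expect any single step to be hard computationally; the work is in the definitions. The one genuinely geometric input is the identity $\rho_n([y]^n)=\gamma_{m,n}(\rho_m([y]^m))$ for a fixed point $y$ of $f_1^n$ of period $m$ — that such a point lies in a class boosted from level $m$ — together with the standard index facts (invariance under the shift, non-emptiness of essential classes). The point most in need of care is to fix at the outset exactly what ``depth of an orbit'' means and to confirm it is respected by the identifications used above, equivalently that reduction of classes is compatible with the shift $f_*$, so that one may pass freely between the orbit $\langle[\alpha]^n\rangle$ and its chosen preimage $\langle[\beta]^d\rangle$ at level $d$.
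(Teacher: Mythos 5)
The paper does not actually prove this lemma: it is imported verbatim from Heath--Piccinini--You \cite[Proposition 1.3]{hpy}, so there is no in-paper proof to compare against. Your argument is correct and is the standard one for this result: you use that the shift $f_*$ on $\Reid(f_*^n)$ has order dividing $n$, that the Reidemeister boosting functions commute with the shift, that the shift preserves index (so all classes in an essential orbit are essential), and that a periodic point of period $m_i$ forces its class to reduce to level $m_i$, whence $m_i\ge d$; disjointness of the $\ell$ classes then yields at least $\ell\cdot(d/\ell)=d$ points. Two cosmetic notes only: you write $\gamma_{m,n}$ for the boosting function on Reidemeister sets, while this paper reserves $\gamma_{m,n}$ for the geometric boost and uses $\iota_{m,n}$ for the algebraic one; and it is worth making explicit that the algebraic orbit of $\rho_n([y_i]^n)=C_i$ is all of $\langle[\alpha]^n\rangle$ simply because $C_i$ is already one of its classes, which is implicit but not stated in your counting step.
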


So then in Nielsen periodic point theory, the use
of orbits is seen to come into its own when the length of orbit is strictly less than its 
depth ($d$ say). At the risk of being repetitious what 
this means is that such orbits contains at least $d$ points, but the number of classes 
is strictly less than $d$. If in the definition of the first periodic point 
number (denoted $NP_n(f)$) we simply counted the number of irreducible essential 
classes, we would be defining a Nielsen number that, in general, had no chance of being
a sharp lower bound. This was the 
fundamental mistake that Halpern made in his famous innovative and useful, but 
unpublished preprint (\cite{halp}). 
The standard example, due to Jiang, comes from a self map of
$\R{\cal P}^3$ (see \cite{bojujiang, hpy}), where for certain $n$ a single irreducible class
(which turns out to be the entire orbit) contains $n$ periodic points. 

One implication of all of this in periodic 
point theory is that the length of an orbit at level 1 must necessarily be 
equal to 1. This can also be seen by the equation $\alpha = \alpha f_*(\alpha)f_*(\alpha^{-1})$,
which shows that $\alpha$ and $f_*(\alpha)$ are Reidemeister equivalent at level 1. 
This is not the case for
iterates of coincidence classes, and the
difficulty in producing a cohesive theory of orbits in a Nielsen theory of 
coincidences of iterates is revealed at this very first level (level 1).
To say more, let $x \in \Phi(f, g)$. Consider 
the list $\{ x, f(x), \cdots, f^{m-1}(x), \cdots \}$, or we could look at the list 
$\{ x, g(x), \cdots, g^{m-1}(x), \cdots \}$. We call these lists the \emph{trajectories} of $x$ under $f$ and $g$ respectively. By Lemma \ref{commuting1} these trajectories are 
the same when $f$ and $g$ commute. The trajectories are perhaps the obvious
candidates for orbits in any generalization of periodic point theory,
but they do not have the desired properties. 
In particular, even at the first level the algebraic trajectory length 
need not be 1. 
geometric points and classes with algebraic classes in the following example
(in fact each point is in its own class). 
\begin{example} \label{noorbits2} Let 
$X = S^1$, and define self-maps $f$ and $g$ of $X$ by
$f(e^{i\theta}) = e^{4i\theta}$, and $g(e^{i\theta}) = e^{-3i\theta}$. Then $f$ and $g$ commute. Now $\Phi(f, \ g) = \{e^{2k\pi i/7} \mid k = 0, 1, \cdots, 6 \}$, and the trajectory 
of $2\pi i/7$ is the set
$ \{ e^{2\pi i/7}, e^{8\pi i/7}, e^{4\pi i/7} \}$. 
\end{example}

Since we cannot use the notion of orbit to count coincidence points, we must fall 
back on counting appropriate classes. In light of the fundamental lemma for periodic points
(Lemma \ref{fundpplemma}) this may appear as a severe disadvantage. In fact for the vast 
majority of the spaces we use in our examples this is not the case. This is because
in our examples when the appropriate ordinary coincidence Nielsen numbers
are non-zero the classes can be made into singletons. Actually the only exception
is Example \ref{projsp} which we use to illustrate that we can define orbits when $g$ is invertible. Without going into too much detail, the point is that the orbit definition has 
no advantage over definitions that count classes when the spaces are tori or nil or solvmanifolds. 
The technical name for this in periodic point theory
is essential torality (see Section \ref{npsection}). This definition does not make sense in our theory, but
what we want to 
say is that for these spaces even if we could define orbits, it would not increase the 
number of coincidence points we could detect (see Remark \ref{weckenremark2}). 
In other words for these spaces there would be no advantage in using orbits.

\subsection{Reducible and irreducible Reidemeister  classes}\label{reduciblesection2}
We come now to the algebraic counterpart $\iota_{m,n}$ of the 
geometric ``boosting functions''
$\gamma_{m,n}$. As with the $\gamma_{m,n}$, we need conditions on $f$ and $g$ in order 
for the $\iota_{m,n}$ to be well defined on (Reidemeister) classes.
It is of course an algebraic condition, 
 and what we require here is only that the
induced homomorphisms 
$f_*, \ g_*:\pi_1(X) \to \pi_1(X)$ commute.  
\begin{definition}\label{boostingfunctions}
Suppose for given self maps $f, \ g:X \to X$ that the induced homomorphisms 
$f_*, \ g_*:\pi_1(X) \to \pi_1(X)$ commute. Let $m\mid n$ be integers, 
then the {\it $m$ to $n$ level coincidence boosting functions} (or simply boosting functions)
$\iota_{m,n}$ are defined by the equation
\begin{align*}
\iota_{m,n}(\alpha) &= \Pi_{\ell =0}^{\frac{n}{m}} g_*^{n-(\ell + 1)m}f_*^{\ell}(\alpha) 
\left(= \sum_{\ell =0}^{\frac{n}{m}} g_*^{n-(\ell + 1)m}f_*^{\ell}(\alpha) \mbox{ when $\pi_1$ abelian}\right ) \\
&= g_*^{n-m}(\alpha) g_*^{n-2m}f_*^{m}(\alpha)\cdots 
g_*^{m}f_*^{n-2m}(\alpha)f_*^{n -m}(\alpha).
\end{align*}
\end{definition} 
\begin{lemma} When  $f_*$ and $g_*$ commute, then the $\iota_{m,n}$ are  
 well defined on Reidemeister classes. \hfill ${\Box}$
\end{lemma}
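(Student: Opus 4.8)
The plan is to reduce the statement to a single algebraic identity and then verify that identity by a telescoping computation. Write $\iota = \iota_{m,n}$ and $k = n/m$, and recall from the expanded form in Definition \ref{boostingfunctions} that the $j$-th factor of $\iota(\alpha)$, for $j = 0, 1, \dots, k-1$, is $g_*^{\,n-(j+1)m} f_*^{\,jm}(\alpha)$. What I want to prove is that for all $\gamma, \beta \in \pi_1(X)$,
$$\iota\bigl(g_*^m(\gamma)\,\beta\,f_*^m(\gamma^{-1})\bigr) \;=\; g_*^{\,n}(\gamma)\;\iota(\beta)\;f_*^{\,n}(\gamma^{-1}).$$
Granting this, well-definedness on Reidemeister classes is immediate: $[\alpha]^m = [\beta]^m$ means $\alpha = g_*^m(\gamma)\beta f_*^m(\gamma^{-1})$ for some $\gamma$, and the identity then exhibits $\iota(\alpha)$ and $\iota(\beta)$ as doubly-twisted conjugate via $\gamma$ at level $n$, i.e. $[\iota(\alpha)]^n = [\iota(\beta)]^n$.

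To prove the identity I would substitute $\alpha = g_*^m(\gamma)\,\beta\, f_*^m(\gamma^{-1})$ into the $j$-th factor and simplify. Applying $f_*^{\,jm}$ and then $g_*^{\,n-(j+1)m}$, and using the hypothesis that $f_*$ and $g_*$ commute to move $f_*^{\,jm}$ past $g_*^m$ in the first sub-term, the $j$-th factor becomes
$$\underbrace{g_*^{\,n-jm} f_*^{\,jm}(\gamma)}_{A_j}\;\cdot\;\underbrace{g_*^{\,n-(j+1)m} f_*^{\,jm}(\beta)}_{B_j}\;\cdot\;\underbrace{\bigl(g_*^{\,n-(j+1)m} f_*^{\,(j+1)m}(\gamma)\bigr)^{-1}}_{A_{j+1}^{-1}},$$
where $B_j$ is precisely the $j$-th factor of $\iota(\beta)$. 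Multiplying over $j = 0, \dots, k-1$ in the given order, the trailing $A_{j+1}^{-1}$ of the $j$-th factor cancels the leading $A_{j+1}$ of the next, so the product telescopes to $A_0\,\iota(\beta)\,A_k^{-1}$; since $A_0 = g_*^{\,n}(\gamma)$ and $A_k = f_*^{\,n}(\gamma)$ (using $km = n$), this is exactly the right-hand side.

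The one genuine subtlety — and the reason the hypothesis is needed — is the commutation of $f_*$ and $g_*$, which is the only non-formal move in rewriting the $j$-th factor; without it the sub-terms do not line up into the $A_j$'s and $B_j$'s. Beyond that the work is purely bookkeeping, and the only care required is that $\pi_1(X)$ is in general nonabelian, so the factors must be kept in their left-to-right order throughout the telescoping and the $A_j$'s may not be commuted past the $B_j$'s; the parenthetical abelian version in Definition \ref{boostingfunctions} is just the same computation transcribed additively, where this caveat evaporates.
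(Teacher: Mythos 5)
Your proof is correct and complete. The paper leaves this lemma to the reader (the statement ends with $\Box$, no proof supplied), so there is no paper proof to compare against, but the telescoping argument you give is the natural and standard way to establish well-definedness of this kind of twisted-conjugacy ``norm'' map, and your computation checks out: the $j$-th factor of $\iota\bigl(g_*^m(\gamma)\beta f_*^m(\gamma^{-1})\bigr)$ does expand (using that $f_*^{jm}$ is a homomorphism, then sliding $f_*^{jm}$ past $g_*^m$) to $A_j B_j A_{j+1}^{-1}$ with $A_j = g_*^{n-jm}f_*^{jm}(\gamma)$, the product telescopes to $A_0\,\iota(\beta)\,A_k^{-1} = g_*^n(\gamma)\,\iota(\beta)\,f_*^n(\gamma^{-1})$, and that exhibits the required doubly-twisted conjugacy at level $n$. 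You were also right to work from the expanded product form of Definition \ref{boostingfunctions} rather than the compact $\Pi$ notation, which as printed has the exponent on $f_*$ as $\ell$ rather than $\ell m$ and an off-by-one upper limit; your indexing matches the expanded (correct) version. Your closing remark about maintaining left-to-right order because $\pi_1(X)$ need not be abelian, and about why commutativity of $f_*$ and $g_*$ is precisely the hypothesis needed (to move $f_*^{jm}$ past $g_*^m$), correctly identifies the only non-formal step.
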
 
We abuse notation and use $\iota_{m,n}$ to denote boosting function on both $\pi_1(X)$ and on Reidemeister 
classes. 
\begin{example}\label{counterexamplept3} Continuing Example 
\ref{counterexamplept1} we considered maps of $S^1$ of
degrees 6 and 2 which clearly commute at the level of $\pi_1(S^1)$.
From now on we will identify each of these maps with its 
respective integer. In addition in this example, the 
boosting functions $\iota$ can be represented by multiplication by
an integer (mod $(6^n - 2^n)$), and we will further abuse notation by identifying them
with the said integer. Thus 
\begin{align*}
&\iota_{1,6} = 6^5 + 2\cdot 6^4 + 2^2 \cdot 6^3 + 2^3 \cdot 6^2 + 2^4
\cdot 6 + 2^5 = 11648, \\
&\iota_{2,6} = 6^4 + 2^2 \cdot 6^2 + 2^4 = 1456,\\
\mbox{ and } \ \ \  &\iota_{3,6} = 6^3 + 2^3 = 224. 
\end{align*}
\end{example}

The proof of the following lemma is an easy generalization of the periodic point
case in
\cite[Lemma 3.1]{hhk2}.
\begin{lemma} {\bf On the nose boosting.} \label{noseboost} 
Let $[\alpha]^n\in \Reid(f^n,g^n)$ be a Reidemeister class 
that reduces to a class 
$[\beta]^m$ at level $m$. Then for any $\sigma \in [\alpha]^n$, there is a $\tau \in [\beta]^m$ for which
$\iota_{m,n}(\tau) =\sigma$ in $\pi_1(X)$. 
\hfill ${\Box}$
\end{lemma}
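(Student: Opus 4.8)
The plan is to unwind the definition of ``$[\alpha]^n$ reduces to $[\beta]^m$'' and then track a single representative through the algebraic boosting map $\iota_{m,n}$. Recall that reducibility means precisely that $\iota_{m,n}([\beta]^m) = [\alpha]^n$ as Reidemeister classes; that is, there is \emph{some} element $\beta_0 \in [\beta]^m$ with $\iota_{m,n}(\beta_0)$ lying in $[\alpha]^n$. The content of the lemma is the sharper ``on the nose'' statement: given an arbitrary chosen representative $\sigma \in [\alpha]^n$, we can adjust $\beta_0$ inside its class $[\beta]^m$ to a new representative $\tau$ so that $\iota_{m,n}(\tau) = \sigma$ holds as an honest equation in $\pi_1(X)$, not merely up to Reidemeister equivalence at level $n$.

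First I would fix $\beta_0 \in [\beta]^m$ with $\iota_{m,n}(\beta_0) \in [\alpha]^n$, and set $\alpha_0 = \iota_{m,n}(\beta_0)$. Since $\alpha_0$ and $\sigma$ lie in the same class in $\Reid(f_*^n, g_*^n)$, there is $\gamma \in \pi_1(X)$ with $\sigma = g_*^n(\gamma)\,\alpha_0\, f_*^n(\gamma^{-1})$. The candidate will be $\tau = g_*^m(\gamma)\,\beta_0\, f_*^m(\gamma^{-1})$; this manifestly lies in $[\beta]^m$, so the only thing to check is the identity $\iota_{m,n}(\tau) = \sigma$. The key step is therefore the algebraic computation that $\iota_{m,n}$ intertwines the level-$m$ twisted conjugation by $\gamma$ with the level-$n$ twisted conjugation by $\gamma$, i.e. $\iota_{m,n}\bigl(g_*^m(\gamma)\,\beta_0\, f_*^m(\gamma^{-1})\bigr) = g_*^n(\gamma)\,\iota_{m,n}(\beta_0)\, f_*^n(\gamma^{-1})$.

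To verify that intertwining identity I would write $\iota_{m,n}(\tau) = \prod_{\ell=0}^{n/m - 1} g_*^{\,n-(\ell+1)m} f_*^{\,\ell m}\bigl(g_*^m(\gamma)\,\beta_0\, f_*^m(\gamma^{-1})\bigr)$, expand each factor using that $f_*$ and $g_*$ are homomorphisms and commute with each other, and observe that the telescoping product collapses: the $f_*^{\,\ell m}$-image of the ``$f_*^m(\gamma^{-1})$'' tail of the $\ell$-th factor is $f_*^{(\ell+1)m}(\gamma^{-1})$, while the $g_*^{\,n-(\ell+1)m} f_*^{\,\ell m}$-image of the ``$g_*^m(\gamma)$'' head of the $\ell$-th factor is $g_*^{\,n-\ell m} f_*^{\,\ell m}(\gamma)$, which matches the previous tail $g_*^{\,n-\ell m} f_*^{\,\ell m}(\gamma^{-1})$ coming from factor $\ell-1$ (using commutativity to move the $g_*$ and $f_*$ powers past one another). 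All interior terms cancel in pairs, leaving only the head of the $\ell=0$ factor, namely $g_*^{\,n-m}(g_*^m(\gamma)) = g_*^n(\gamma)$, on the far left, and the tail of the $\ell = n/m - 1$ factor, namely $f_*^{\,n}(\gamma^{-1})$, on the far right, with $\iota_{m,n}(\beta_0)$ in between. Then $\iota_{m,n}(\tau) = g_*^n(\gamma)\,\alpha_0\, f_*^n(\gamma^{-1}) = \sigma$, as desired.

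The main obstacle is purely bookkeeping: keeping the exponents on $f_*$ and $g_*$ straight through the telescoping cancellation, and making sure the commutativity hypothesis on $f_*, g_*$ is invoked exactly where needed to reorder the composed homomorphisms so that consecutive factors share a common term. This is where the cited periodic-point argument of \cite[Lemma 3.1]{hhk2} does the analogous computation with $g_* = \id$, so the generalization is a matter of carrying the extra $g_*$-powers along; nothing genuinely new happens, which is why the lemma can be stated with its proof omitted.
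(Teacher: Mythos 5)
Your proposal is correct, and it is exactly the "easy generalization" of \cite[Lemma 3.1]{hhk2} that the paper alludes to without writing out: fix $\beta_0$ with $\iota_{m,n}(\beta_0)\in[\alpha]^n$, write $\sigma = g_*^n(\gamma)\,\iota_{m,n}(\beta_0)\,f_*^n(\gamma^{-1})$, take $\tau = g_*^m(\gamma)\,\beta_0\,f_*^m(\gamma^{-1})$, and verify the intertwining identity by the telescoping cancellation $g_*^{\,n-(\ell+1)m}f_*^{(\ell+1)m}(\gamma^{-1})\cdot g_*^{\,n-(\ell+1)m}f_*^{(\ell+1)m}(\gamma)=e$, leaving $g_*^n(\gamma)$ and $f_*^n(\gamma^{-1})$ at the ends. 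The commutativity of $f_*$ and $g_*$ is invoked precisely where you say, to regroup powers so consecutive head/tail terms are honest inverses even when $\pi_1(X)$ is nonabelian.
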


The following can be verified by an easy calculation from the definitions of the 
$\rho_k$, the geometric and algebraic boosts and Lemma \ref{commuting1} (see \cite[Proposition 1.14]{hpy} for the corresponding 
proof in periodic point theory).
\begin{lemma}\label{diagrammatic}  
Suppose that $f, \ g:X \to X$ induce commuting homomorphisms on $\pi_1(X)$. Let $k\mid m \mid n$ be integers, then we have that $\iota_{k,n}=\iota_{m,n}\iota_{k,m}$.
Furthermore if $f$ and $g$ commute as functions, then 
the following diagram exists 
\[ \begin{CD}
\Phi(f^m,g^m)/\sim @>\rho_m>> \Reid(f_*^m, g_*^m) \\
@V\gamma_{m,n}VV @VV\iota_{m,n}V \\
\Phi(f^n,g^n)/\sim @>\rho_n>> \Reid(f_*^n, g_*^n) ,  
\end{CD}
\]
and is commutative. \hfill ${\Box}$
\end{lemma}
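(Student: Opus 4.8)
The plan is to prove the two assertions of Lemma \ref{diagrammatic} separately, since the first is purely algebraic and the second mixes geometry with algebra. For the identity $\iota_{k,n}=\iota_{m,n}\iota_{k,m}$, I would work directly from the product formula in Definition \ref{boostingfunctions}. Writing everything multiplicatively (the abelian case is then a transcription), $\iota_{k,m}(\alpha)$ is a product of terms $g_*^{m-(\ell+1)k}f_*^{\ell k}(\alpha)$ for $\ell=0,\dots,m/k-1$, and applying $\iota_{m,n}$ to that means inserting each such term into the product $\prod_{j=0}^{n/m-1} g_*^{n-(j+1)m}f_*^{jm}(\,\cdot\,)$. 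Since $f_*$ and $g_*$ commute, each resulting summand has the shape $g_*^{a}f_*^{b}(\alpha)$ with $a+b = n-k$ and $b$ running over the multiples $0,k,2k,\dots$ exactly once; collecting the double product over $(j,\ell)$ and reindexing by $p=jm/k+\ell$ (so $p$ runs from $0$ to $n/k-1$) reproduces the single-product formula for $\iota_{k,n}$. I expect the only delicate point to be getting the exponents to match: one must check that $(n-(j+1)m) + (jm) + (m-(\ell+1)k) + (\ell k) = n-k$ identically in $j,\ell$, and that the bijection $(j,\ell)\mapsto jm/k+\ell$ between $\{0,\dots,n/m-1\}\times\{0,\dots,m/k-1\}$ and $\{0,\dots,n/k-1\}$ is order-preserving so the factors come out in the stated left-to-right order. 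Commutativity of $f_*,g_*$ is what makes the $g$-powers and $f$-powers separate cleanly so that this reindexing is legitimate.

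For the commutative square, I would unwind the definition of $\rho_k$. Fix $x\in\Phi(f^m,g^m)$ lying in a geometric class ${\bf A}^m$, and choose a path $c$ from $x_0$ to $x$; then $\rho_m({\bf A}^m)=[\,g^m(c)\,f^m(c^{-1})\,]^m$. When $f$ and $g$ commute as maps, Lemma \ref{commuting1} tells us $x\in\Phi(f^n,g^n)$ as well, so $\gamma_{m,n}({\bf A}^m)$ is the class ${\bf A}^n$ of $x$ at level $n$, and the \emph{same} path $c$ computes $\rho_n(\gamma_{m,n}({\bf A}^m)) = [\,g^n(c)\,f^n(c^{-1})\,]^n$. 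So the square commutes as soon as one verifies that $\iota_{m,n}$ applied to the element $g^m(c)f^m(c^{-1})$ of the fundamental groupoid (read off as a loop after the usual base-point identification) lands in the Reidemeister class of $g^n(c)f^n(c^{-1})$ at level $n$. This is the computational heart: one expands $g^n(c)f^n(c^{-1})$ by telescoping, inserting copies of $f^j g^k(x) = g^k f^j(x)$ and the path $c$ pushed forward, and regroups the result as $\iota_{m,n}$ of the level-$m$ loop times a doubly-twisted conjugacy correction of the form $g^n_*(\gamma)\,(\,\cdot\,)\,f^n_*(\gamma^{-1})$; that correction is exactly what allows us to land in the right Reidemeister class rather than on the nose. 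Here ``$f$ and $g$ commute as functions'' is used twice: once for Lemma \ref{commuting1} to even have $\gamma_{m,n}$, and once to permute the $f$'s past the $g$'s in the telescoping expansion so that the $\iota_{m,n}$ pattern emerges.

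The main obstacle I anticipate is bookkeeping rather than conceptual difficulty: organizing the telescoping expansion of $g^n(c)f^n(c^{-1})$ so that the boosting-function pattern $g_*^{n-(\ell+1)m}f_*^{\ell m}$ is visibly reproduced, while tracking the leftover path segments that must be absorbed into a single doubly-twisted conjugating element. For this I would lean on the cited periodic-point precedent, \cite[Proposition 1.14]{hpy} and \cite[Lemma 3.1]{hhk2}, transcribing the argument with $g_*$ inserted in place of the identity and invoking Lemma \ref{noseboost} for the on-the-nose version when a genuine path lift (not just its class) is needed. Since the authors explicitly say this ``can be verified by an easy calculation from the definitions,'' I would present the exponent-matching identity and the reindexing bijection in full, state the telescoping identity for $\rho_n(\gamma_{m,n}({\bf A}^m))$, and leave the remaining path-algebra manipulations as the routine verification they are.
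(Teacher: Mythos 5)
Your proposal is correct and follows exactly the route the paper has in mind; the paper itself gives no written proof, saying only that the lemma ``can be verified by an easy calculation from the definitions'' and pointing to the periodic-point analogue in \cite[Proposition 1.14]{hpy}. For the identity $\iota_{k,n}=\iota_{m,n}\iota_{k,m}$, your reindexing bijection $(j,\ell)\mapsto jm/k+\ell$ and the exponent check $(n-(j+1)m)+(m-(\ell+1)k)=n-(p+1)k$ with $p=jm/k+\ell$ are precisely what one needs, and commutativity of $f_*$ and $g_*$ is used exactly where you say, to collapse $f_*^{jm}g_*^{m-(\ell+1)k}$ into $g_*^{m-(\ell+1)k}f_*^{jm}$.

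One small correction to your expectations for the square: no doubly-twisted conjugacy correction actually appears. Setting $\alpha=g^m(c)f^m(c^{-1})$ and $v_\ell = g^{n-\ell m}f^{\ell m}(c)$ for $\ell=0,\dots,n/m$, one checks using $fg=gf$ that the $\ell$-th factor of $\iota_{m,n}(\alpha)$ is
\[
g_*^{\,n-(\ell+1)m}f_*^{\,\ell m}(\alpha) \;=\; v_\ell\,v_{\ell+1}^{-1},
\]
so the product telescopes to $v_0\,v_{n/m}^{-1}=g^n(c)\,f^n(c^{-1})$ on the nose, already a representative of $\rho_n(\gamma_{m,n}({\bf A}^m))$. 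So the argument is cleaner than you anticipated: you do not need Lemma \ref{noseboost} here, nor a leftover conjugating element $\gamma$. Since a trivial correction is still a correction, nothing in your plan breaks, but you can drop that part of the bookkeeping entirely. Everything else---using Lemma \ref{commuting1} to make $\gamma_{m,n}$ well defined and to let the same path $c$ compute $\rho_m$ and $\rho_n$---is right.
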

\begin{definition} 
\label{ER}
Let $f, \ g: X\to X$ be maps. We say 
$[\alpha]^n \in {\cal R}(f_*^n , g_*^{n})$ is \emph{reducible}
to $[\beta]^m \in {\cal R}(f_*^m, g_*^m )$ if
$\iota_{m,n}([\beta]^m) = [\alpha]^n$. If $[\alpha]^n$ is not reducible to any level $m<n$ then it is \emph{irreducible}. 
We say that 
$[\alpha]^n$ has \emph{depth} $d$ if $d$ is the smallest integer for which
there is a class 
$[\delta]^d$ to which $[\alpha]^n$ reduces. 
\end{definition}

As in periodic point theory, if $[\alpha]^n $ is in the image of no 
$\iota_{m,n}$ for any $m\mid n$, then there can be no geometric coincidence points of $f^m$ and $g^m$ whose Reidemeister class in $\Reid(f_*^m, g_*^m)$ boosts to $[\alpha]^n$. In
fact there can be no coincidence points of $f_1^m$ and $g_1^m$ whose class boosts to $[\alpha]^n$ for any $f_1\simeq f$ and $g_1\simeq g$.

When $f$ and $g$ commute, this fact follows from the diagram above, but note that we do not need commutativity of $f$ and $g$ (only of $f_*$ and $g_*$) to deduce this. So then our
algebraic constructions are still useful in detecting geometric behaviour even when the maps are not geometrically commutative.
\section{The analogue $NP_{n}(f,g)$ of the periodic point number $NP_{n}(f)$.}\label{npsection}
As in the introduction 
we use the symbols $P_{n}(f,g)$ to denote the set of points $x$ with 
$f^n(x) = g^n(x)$ but $f^m(x) \neq g^m(x)$ for any $m\mid n$, and 
$MP_{n}(f,g)$ to denote the minimum $\min \# \{P_{n}(f_1,g_1) \mid f_1 \sim f \mbox{ and } 
g_1 \sim g\}$. 
The aim in this section is to define a suitable lower bound for $MP_{n}(f,g)$, give some of its 
properties together with a number of examples. 
\begin{definition}\label{NP}
We define $NP_{n}(f,g)$ as the number of irreducible essential Reidemeister classes of $\Reid(f_*^n, g_*^n)$. 
\end{definition}
\begin{theorem}\label{northbound}
$NP_{n}(f,g)$ is homotopy invariant in $f$ and $g$, and 
\[ NP_{n}(f,g) \leq MP_{n}(f,g) \leq \#(P_{n}(f,g)). \]
\end{theorem}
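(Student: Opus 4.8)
The plan is to establish the two inequalities separately, the right-hand one being essentially trivial and the left-hand one requiring the homotopy invariance statement to be proved en route.

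First I would dispose of the rightmost inequality $MP_n(f,g) \leq \#(P_n(f,g))$. This is immediate from the definition of $MP_n(f,g)$ as a minimum over the homotopy class: the pair $(f,g)$ is itself among the admissible pairs $(f_1,g_1)$ with $f_1\simeq f$ and $g_1\simeq g$, so $\#(P_n(f,g))$ is one of the cardinalities over which the minimum is taken, hence bounds it from below. I would state this in one sentence.

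Next, for the homotopy invariance of $NP_n(f,g)$, I would argue that a homotopy $f\simeq f_1$ together with a homotopy $g\simeq g_1$ induces, for each $n$, a bijection $\Reid(f_*^n,g_*^n)\to\Reid((f_1)_*^n,(g_1)_*^n)$ which preserves the index (semi-index) assigned to classes — this is the standard homotopy invariance of Reidemeister classes and their indices recalled in the Preliminaries (the constructions being independent of base point and path class, and the index defined geometrically via $\rho_n$). The key additional point is that this bijection is compatible with the boosting functions $\iota_{m,n}$: since $\iota_{m,n}$ is defined purely in terms of the induced homomorphisms $f_*,g_*$ on $\pi_1(X)$, and a homotopy does not change these homomorphisms (up to the canonical identification), the bijection commutes with $\iota_{m,n}$ for all $m\mid n$. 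Therefore it carries reducible classes to reducible classes and essential classes to essential classes, hence restricts to a bijection between the sets of irreducible essential classes. Counting these gives $NP_n(f,g)=NP_n(f_1,g_1)$.

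Finally, for $NP_n(f,g)\leq MP_n(f,g)$, by homotopy invariance it suffices to show $NP_n(f,g)\leq \#(P_n(f,g))$ for the given pair, and then apply the inequality to a pair $(f_1,g_1)$ realizing the minimum. The core claim is: every irreducible essential class $[\alpha]^n\in\Reid(f_*^n,g_*^n)$ contains at least one point of $P_n(f,g)$. Indeed, since $[\alpha]^n$ is essential it is nonempty, so $[\alpha]^n=\rho_n(\mathbf{A}^n)$ for some geometric Nielsen class $\mathbf{A}^n\subseteq\Phi(f^n,g^n)$, and $\mathbf{A}^n$ is nonempty. Take $x\in\mathbf{A}^n$. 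I must show some such $x$ lies in $P_n(f,g)$, i.e. $f^m(x)\neq g^m(x)$ for every $m\mid n$, $m\neq n$. The main obstacle is exactly here, because — as the paper has emphasized at length — the geometric boosting functions $\gamma_{m,n}$ need not exist when $f,g$ do not commute, so one cannot argue that a coincidence of $f^m,g^m$ forces its class to boost to $[\alpha]^n$. The resolution must use the algebraic statement made just before this theorem: if $[\alpha]^n$ is irreducible, it is in the image of no $\iota_{m,n}$, so there is no coincidence point of $f^m$ and $g^m$ whose Reidemeister class in $\Reid(f_*^m,g_*^m)$ boosts to $[\alpha]^n$ — and this conclusion, crucially, does not require commutativity of $f$ and $g$. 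The delicate part is connecting \emph{this} to the points of $\mathbf{A}^n$: one argues that if some $x\in\mathbf{A}^n$ had $f^m(x)=g^m(x)$ for some $m\mid n$ with $m\neq n$, then $x$ would lie in a geometric class at level $m$ whose $\rho_m$-image boosts (via $\iota_{m,n}$) to $\rho_n$ of the class at level $n$ containing $x$, namely $[\alpha]^n$, contradicting irreducibility. Since distinct essential classes are disjoint subsets of $\Phi(f^n,g^n)$, picking one point of $P_n(f,g)$ from each of the $NP_n(f,g)$ irreducible essential classes yields $NP_n(f,g)\leq\#(P_n(f,g))$, completing the proof.
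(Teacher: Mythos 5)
Your proposal is correct and follows essentially the same route as the paper: the right inequality from the definition of $MP_n$ as a minimum over the homotopy class, homotopy invariance of $NP_n$ from the fact that it is determined entirely by the index-weighted Reidemeister data of $f_*^n,g_*^n$ together with the $\iota_{m,n}$, and the left inequality by showing that any essential irreducible $[\alpha]^n$ consists of points of $P_n(f,g)$, since a level-$m$ coincidence ($m\mid n$, $m\neq n$) at a point $x\in\mathbf{A}^n$ would give $\iota_{m,n}(\rho_m(\mathbf{B}^m))=[\alpha]^n$, contradicting irreducibility. The paper's proof is terser (the homotopy invariance and the boosting identity are stated in a single line each), but the structure and the key observation — that the contradiction is obtained purely at the algebraic level, so geometric commutativity of $f$ and $g$ is not needed — are the same.
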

\begin{proof}
The homotopy invariance holds because $NP_{n}$ is defined only in terms of the induced maps $f_*$ and $g_*$, and the inequality on the right is obvious.

For the other inequality, let $f$ and $g$ be arbitrary. We need only show that 
$NP_{n}(f,g) \leq \#(P_{n}(f,g)).$ Accordingly, let 
$[\alpha]^n \in \Reid(f_*^n, g_*^n)$
be an essential irreducible class. Because $[\alpha]^n$ is essential, there is a coincidence 
point $x$ with coincidence class 
${\bf A}^n$ with $f^n(x) = g^n(x)$ and $\rho_n({\bf A}^n) =[g^n(c)f^n(c^{-1})]^n = [\alpha]^n$,
where $c$ is any path from $x_0$ (the base point) to $x$. It suffices to show that
$x \in P_{n}(f,g)$. If this is so, then
this process will define an injection from the irreducible essential
Reidemeister classes into $P_{n}(f,g)$ establishing the inequality.

For the sake of deducing a contradiction, assume that $x \not \in P_{n}(f,g)$, that is, there is some $m$ such that $f^m(x) = g^m(x)$ with $m\mid n$, and $m \neq n$. 
Then we will have $x \in {\bf B}^m$ where ${\bf B}^m$ is the coincidence class of $x$ at level $m$. Let $[\beta]^m = \rho_m({\bf B}^m) =[g^m(c)f^m(c^{-1})]^m$. Then
by the definition of $\iota_{m,n}$ we have that
$\iota_{m,n}(\rho_m({\bf B}^m))= \iota_{m,n}([g^m(c)f^m(c^{-1})]^m)=[g^n(c)f^n(c^{-1})]^n = [\alpha]^n$, contradicting the irreducibility of $[\alpha]$.
\end{proof}

Our first task is to compare $NP_n(f, \id)$ with $NP_n(f)$ ($\id$ is the identity). 
Recall, in the context of periodic point theory,
that a map $f:X \to X$ is said to be {\it essentially toral} (\cite{hk1})
if, for all $m \mid n$ and 
every $[\alpha]^m\in {\cal E}(f^m)$, the depth of $[\alpha]^m$ and the orbit length 
of $[\alpha]^m$ coincide, and 
the boosting functions are injective on essential boosts. As already discussed, the first 
part of this simply  means we may as well define $NP_n(f)$ to be the number of 
irreducible essential {\it classes}.  For all but one of our 
examples (Example  \ref{projsp}), the maps involved are  (individually) essentially toral. 
see \cite[Corollary 4.6]{hk1}

\begin{proposition}\label{pfg=pf} Let $f:X \to X$ be a self map, then
$NP_n(f) \geq NP_n(f,\id)$.
 If $f$ is essentially toral, then $NP_n(f,\id) = NP_n(f)$. \hfill ${\Box}$
\end{proposition}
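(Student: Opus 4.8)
The plan is to compare the two Nielsen type numbers by comparing the two notions of reducibility that underlie them. Recall $NP_n(f)$ counts irreducible essential $f_*$-\emph{orbits} of Reidemeister classes (for the self map $f$, i.e. for the fixed point theory of $f$), while $NP_n(f,\id)$ counts irreducible essential \emph{classes} of $\Reid(f_*^n,\id_*^n)=\Reid(f_*^n)$ in the sense of Definition~\ref{ER}, where reducibility is via the coincidence boosting functions $\iota_{m,n}$ with $g=\id$. Note $g_*=\id$ commutes with $f_*$, so all the $\iota_{m,n}$ are defined; moreover the coincidence classes $\Reid(f_*^n,\id_*^n)$ are exactly the ordinary Reidemeister classes of $f^n$, and a coincidence $f^n(x)=\id^n(x)$ is just a fixed point of $f^n$, with the same index, so ``essential'' means the same thing in both theories. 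Hence the only thing to reconcile is the reduction relation.

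First I would check that with $g_*=\id$ the boosting function collapses to the familiar periodic-point boosting: plugging $g_*=\id$ into Definition~\ref{boostingfunctions} gives $\iota_{m,n}(\alpha)=\prod_{\ell=0}^{n/m-1} f_*^{\ell m}(\alpha)$, which is precisely the classical algebraic boost used in periodic point theory (e.g. in \cite{hpy,hhk2}). So a class $[\alpha]^n$ is coincidence-reducible to level $m$ (Definition~\ref{ER}) exactly when it is periodic-point-reducible to level $m$ in the usual sense. Consequently, $NP_n(f,\id)$ counts essential classes $[\alpha]^n$ that are in the image of no boost $\iota_{m,n}$ for $m\mid n$, $m\neq n$. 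Now for the inequality $NP_n(f)\geq NP_n(f,\id)$: given an essential class $[\alpha]^n$ that is irreducible in the coincidence sense, its $f_*$-orbit $\langle[\alpha]^n\rangle$ is essential (since it contains the essential class $[\alpha]^n$) and cannot reduce to any lower level, because if the orbit reduced to level $m$ then in particular some class in the orbit, hence $[\alpha]^n$ itself after applying the orbit relation, would be a boost from level $m$ — one must be slightly careful here, as ``orbit reduces'' allows the boosted class to land anywhere in the orbit, not exactly on $[\alpha]^n$. I would handle this by noting that the orbit relation at level $n$ is generated by $[\alpha]^n\mapsto[f_*(\alpha)]^n$, and boosting commutes appropriately with this (this is essentially the content of Lemma~\ref{diagrammatic} and Lemma~\ref{noseboost}), so if the orbit of $[\alpha]^n$ is boosted from level $m$ then $[\alpha]^n$ itself is boosted from some class at level $m$. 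Thus distinct coincidence-irreducible essential classes whose $f_*$-orbits coincide would force those classes into the same orbit — so the map sending a coincidence-irreducible essential class to its $f_*$-orbit is well defined into the set counted by $NP_n(f)$, but it need not be injective, which is exactly why we only get $\geq$: an essential irreducible orbit may contain several essential coincidence-irreducible classes, and $NP_n(f)$ counts the orbit once while $NP_n(f,\id)$ counts each class. Actually I should reverse this: $NP_n(f)$ counts orbits, $NP_n(f,\id)$ counts classes, and one orbit can give rise to at most... hmm. Let me restate: the function from coincidence-irreducible essential classes to their orbits is a surjection onto (a subset of) the irreducible essential orbits but it collapses classes in the same orbit, so $\#\{\text{classes}\}$ could exceed $\#\{\text{orbits}\}$ — no. The correct bookkeeping: an irreducible essential \emph{orbit} need not consist of irreducible classes, but if a class in it is reducible the whole orbit reduces (by the commuting just described), contradiction; so every class in an irreducible essential orbit is irreducible, and the orbit has some length $\ell\geq 1$, contributing $\ell\geq 1$ to $NP_n(f,\id)$ but only $1$ to $NP_n(f)$. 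So in fact the inequality should go the other way unless orbit lengths are all $1$. I would resolve this by recalling (as the text notes right after Lemma~\ref{fundpplemma}) that orbit length at level... no, at higher levels orbits can be long.

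Given the above tension, the honest plan is: the inequality $NP_n(f)\geq NP_n(f,\id)$ must come from the fact that $NP_n(f)$ is defined not as the number of irreducible essential orbits but with multiplicity — rereading the excerpt, $NP_n(f)$ counts ``depth of algebraic orbits'' weighted so that an essential irreducible orbit of length $\ell$ and depth $d$ contributes according to Lemma~\ref{fundpplemma}; and the point of essential torality is precisely that depth equals length, so each such orbit contributes exactly its length $\ell$, which is the number of (irreducible essential) classes it contains, giving equality $NP_n(f,\id)=NP_n(f)$. Without essential torality, $NP_n(f)$ can only be larger (depth $\geq$ length), yielding the inequality. So the skeleton of the proof is: (1) identify $\iota_{m,n}$ with $g=\id$ as the classical boost, so coincidence-reducibility $=$ periodic-point-reducibility of classes; (2) observe ``essential'' agrees; (3) each irreducible essential $f_*$-orbit of length $\ell$ consists of $\ell$ irreducible essential classes, so $NP_n(f,\id)=\sum_{\text{irr. ess. orbits}}\ell \leq \sum d = NP_n(f)$; (4) when $f$ is essentially toral, $\ell=d$ for every essential orbit, giving equality. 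The main obstacle I expect is step (3): pinning down the exact definition of $NP_n(f)$ being used (classes-with-multiplicity-$d$ versus orbits) and verifying that irreducibility of one class in an orbit propagates to the whole orbit, which requires carefully combining Lemma~\ref{diagrammatic}, Lemma~\ref{noseboost}, and the level-$n$ orbit relation $\alpha\sim f_*(\alpha)$ — essentially showing the boosting functions are equivariant with respect to the orbit action. Everything else is bookkeeping.
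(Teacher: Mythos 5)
Your final skeleton (steps (1)--(4) at the end) is correct, and it is exactly the argument the paper has in mind; the paper offers no written proof here (the proposition is marked with $\Box$), leaving precisely this bookkeeping to the reader. But the route you took to get there is visibly unsteady, with two self-reversals in the middle, and the argument would benefit from being stated once, cleanly, since the directional confusion you hit is exactly where a careless reader would go wrong. The pivotal fact you wavered on is this: in Jiang's definition (as in \cite{hpy}), $NP_n(f)$ is the sum over irreducible essential $f_*$-orbits at level $n$ of the \emph{depth} of the orbit; an irreducible orbit has depth equal to $n$ by definition of irreducibility, so $NP_n(f) = n\cdot\#\{\text{irreducible essential orbits}\}$. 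Meanwhile $NP_n(f,\id)$ counts irreducible essential \emph{classes}. You correctly observe that (i) with $g_*=\id$ the coincidence boost $\iota_{m,n}$ reduces to the classical boost $\alpha\mapsto\prod_{\ell=0}^{n/m-1}f_*^{\ell m}(\alpha)$, (ii) essentiality means the same thing, and (iii) the boost is $f_*$-equivariant, so that reducibility of one class in an orbit propagates to the whole orbit, and hence each irreducible essential orbit consists entirely of irreducible essential classes. If such an orbit has length $\ell$, it contributes $\ell$ to $NP_n(f,\id)$ and $n$ to $NP_n(f)$; since $\ell\mid n$, we get $NP_n(f,\id)\leq NP_n(f)$. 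When $f$ is essentially toral, depth equals length for every essential orbit, and depth $=n$ for irreducible ones, so $\ell=n$ and the inequality is an equality. Item (iii) is the only step requiring a genuine (if small) computation --- the equivariance $\iota_{m,n}\circ f_* = f_*\circ\iota_{m,n}$, which is immediate from the product formula --- and you flagged it correctly as the main thing to pin down; I would recommend writing that identity out explicitly rather than appealing vaguely to Lemmas \ref{diagrammatic} and \ref{noseboost}, neither of which quite states it.
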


At this point in the exposition of periodic point theory in
\cite{hpy} we would be working towards a M\"obius inversion type
result on a certain class of maps on Jiang spaces, in particular on tori.
The generalization of this result applied to Example \ref{counterexamplept1}, would say that 
$NP_6( f, \ g)= N(f^6, \ g^6) - N(f^3, \ g^3) - N(f^2, \ g^2) + N(f, \ g)$. As we shall see below, the coincidence version 
of this is false in general 
(even when $f$ and $g$ commute). 
We do have a weaker version of this result,  but before we can state it, we need the following  coincidence analogue of a defintion  from \cite{hk1}. 
\begin{definition}(c.f. \cite{hk1})
We say that the pair $f$, $g$ is {\it
coincidence essentially reducible} provided that for any 
essential class
$[\alpha]^n$ of $f^n$ and $g^n$, if
$[\alpha]^n$ reduces to some class $[\beta]^m$, then
$[\beta]^m$ is also essential. 
If for a given space $X$ any pair of self maps is
coincidence essentially reducible then we say that $X$ is coincidence essentially reducible. 
\end{definition}

For $g= \id$ the identity on a torus, the pair $f$, $g$ 
 is always coincidence essentially reducible. 
In periodic point theory there are simple 
examples of maps that are not essentially reducible, but these tend to be maps on
non manifolds, and so do not provide examples in our setting (the coincidence index is 
in general not defined for non-manifolds).  For coincidences of iterates, we need 
the assumption that the induced maps commute at the level of the fundamental group. 
\begin{theorem}\label{thmessred} If $f$ and $g$ are maps of
tori for which the induced maps on the
fundamental groups commute, then the pair $f$, $g$ 
is coincidence essentially reducible.
\end{theorem}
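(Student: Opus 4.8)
The plan is to reduce everything to the torus formula (Theorem \ref{toriformula}) together with the explicit description of the boosting functions $\iota_{m,n}$ (Definition \ref{boostingfunctions}). Since $X = T^r$, identify $\pi_1(X)$ with $\Z^r$ and the maps $f,g$ with their linearizations, integer matrices $F$ and $G$ that commute by hypothesis. By Theorem \ref{abelianexactseq} the Reidemeister set $\Reid(f^n_*,g^n_*)$ is the cokernel of $G^n - F^n$ acting on $\Z^r$, and by Theorem \ref{toriformula} the index of a Reidemeister class $[\alpha]^n$ is (up to sign) $\det(G^n - F^n)$ if $\det(G^n-F^n)\neq 0$, and is $0$ if $\det(G^n-F^n)=0$. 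The crucial point is that on tori \emph{every} Reidemeister class has the same index, namely $\det(G^n-F^n)$ divided by the number of classes, so all classes are essential when $\det(G^n-F^n)\neq 0$ and all classes are inessential when $\det(G^n-F^n)=0$.

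With this in hand the proof splits into two cases. First, suppose $[\alpha]^n$ is essential and reduces to $[\beta]^m$. Essentiality of $[\alpha]^n$ means $N(f^n,g^n)=|\det(G^n-F^n)|\neq 0$. I would then show $\det(G^m - F^m)\neq 0$ as well: since $F$ and $G$ commute we have the factorization $G^n - F^n = (G^m - F^m)\bigl(\sum_{\ell=0}^{n/m-1} G^{n-(\ell+1)m} F^{\ell m}\bigr)$ — note this second factor is exactly the matrix representing $\iota_{m,n}$ — so $\det(G^n-F^n) = \det(G^m-F^m)\cdot\det(\iota_{m,n})$, and hence $\det(G^n-F^n)\neq 0$ forces $\det(G^m-F^m)\neq 0$. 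Therefore $N(f^m,g^m)\neq 0$, and since on a torus all classes at level $m$ share the common nonzero index $\det(G^m-F^m)$, in particular $[\beta]^m$ is essential. This establishes coincidence essential reducibility.

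The main obstacle — really the only nontrivial ingredient — is justifying the claim that on a torus all Reidemeister classes at a fixed level have equal index, and that this common value is nonzero exactly when $\det(G^n-F^n)\neq 0$. This should follow from Theorem \ref{toriformula}: when $\det(G^n-F^n)\neq 0$ the linear maps $F^n,G^n$ realize $N(f^n,g^n)=|\det(G^n-F^n)|$ coincidence points, and since $R(f^n,g^n) = |\det(G^n-F^n)|$ by exactness (Theorem \ref{abelianexactseq}), each of the $|\det(G^n-F^n)|$ Reidemeister classes contains exactly one of these coincidence points and so is essential; when $\det(G^n-F^n)=0$ we have $N(f^n,g^n)=0$ so no class is essential. (Equivalently, one can invoke that tori are Jiang, hence coincidence weakly Jiang, which gives the dichotomy $N=0$ or $N=R$ and the uniform distribution of index.) I would also remark that the commutativity hypothesis on $F$ and $G$ is used precisely twice: once so that $\iota_{m,n}$ is well defined (Lemma after Definition \ref{boostingfunctions}), and once for the matrix factorization $G^n-F^n = (G^m-F^m)\cdot\iota_{m,n}$, which fails for non-commuting matrices. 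Finally, the statement that for $g=\id$ on a torus the pair is always coincidence essentially reducible is the special case $G = I$, and is already covered.
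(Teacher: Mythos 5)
Your proposal is correct and takes essentially the same route as the paper: identify $f,g$ with commuting integer matrices $F,G$, factor $G^n-F^n=(G^m-F^m)\cdot\iota_{m,n}$ to conclude $\det(G^m-F^m)\neq 0$ from $\det(G^n-F^n)\neq 0$, and then invoke the Jiang (hence coincidence weakly Jiang) property of the torus so that nonvanishing of the determinant makes every class at level $m$ essential. Your explicit identification of the cofactor with the boosting matrix $\iota_{m,n}$ is a nice touch the paper omits; the only slip is your opening phrasing that the index of each class ``is (up to sign) $\det(G^n-F^n)$'' — you mean (and correctly say a moment later) that each class has index $\pm 1$, with the classes numbering $|\det(G^n-F^n)|$.
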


The proof in \cite{hk1}, 
that tori are essentially reducible for periodic point theory, 
uses the linearization of the self map
$f$ under consideration. In particular it uses the linearization $F$ of $f$, and the fact that 
$N(f^n)= |\det(F - I)|$, where $I$ is the identity matrix. We need a slightly different proof than 
the one given in \cite{hk1},
but we will use Theorem \ref{toriformula}. We also use the fact that with respect to both
fixed and coincidence point 
theory tori are Jiang spaces and hence also weakly Jiang (Definition \ref{weak j}). 

\begin{proof}
Suppose that some class at level $m$ say, is essential. Then 
every class at level $m$ is essential. We show that every class at level $k\mid m$ is also essential, 
or equivalently that $N(f^k, \ g^k) \neq 0$ . 
Accordingly let $F$ and $G$ be the linearizations of $f$ and $g$ respectively. 
Now we have that $N(f^m, \ g^m)= |\det(F^m - G^m)|\neq 0$. Let $r = \frac{m}{k}$. 
By hypothesis $F$ and $G$ commute, 
so $F^m - G^m= (F^k - G^k) (F^r + F^{r-1}G + \cdots + G^r)$. So 
$N(f^k, \ g^k)= |\det(F^k - G^k)| \neq 0$ or else we would have that 
$|\det(F^m - G^m)|=0$, a contradiction.
\end{proof} 

The next result is our weaker version of 
M\"obius inversion, which requires only essential reducibility. The periodic point analogy of the second inequality holds true, but this is not the case for the first inequality (see also Example \ref{projsp}, and the discussion in Section \ref{invertiblesection}).
\begin{theorem}\label{pseudomob}
Let $P(n) = \{ p(1), \ p(2), \cdots, p(k) \}$ be the set of primes dividing $n$, and suppose that 
the pair $f$, $g$ is essentially reducible. Then 
$$N(f^n, \ g^n) \geq NP_n(f,g) \geq N(f^n, g^n) - \sum_{i = 1}^k N(f^{n:i}, g^{n:i}), $$
where ${n:i}= n \cdot (p(i))^{-1}$. 
\end{theorem}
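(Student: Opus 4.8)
The plan is to bound $NP_n(f,g)$, which counts irreducible essential Reidemeister classes, by comparing it with the total count of essential classes $N(f^n,g^n)$ and subtracting an overcount of the reducible ones. First I would note that every essential class $[\alpha]^n$ is either irreducible or reduces to some class $[\beta]^m$ with $m\mid n$, $m<n$; and by the multiplicativity $\iota_{k,n}=\iota_{m,n}\iota_{k,m}$ of Lemma \ref{diagrammatic}, if $[\alpha]^n$ reduces at all then it reduces to a class at some \emph{maximal} proper divisor of $n$, i.e. to some level of the form $n{:}i = n\cdot(p(i))^{-1}$ for a prime $p(i)\mid n$. Hence the set of reducible essential classes at level $n$ is the union over $i=1,\dots,k$ of the images $\iota_{n:i,\,n}\bigl(\Reid(f_*^{n:i},g_*^{n:i})\bigr)$ intersected with the essential classes. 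The count of irreducible essential classes is therefore at least $N(f^n,g^n)$ minus the size of this union.

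Next I would estimate the size of each piece of the union. Here the essential reducibility hypothesis is what makes the argument go: every essential class $[\alpha]^n$ that is reducible to level $n{:}i$ reduces to an \emph{essential} class $[\beta]^{n:i}$, so the reducible essential classes coming from level $n{:}i$ are precisely the image under $\iota_{n:i,\,n}$ of $\mathcal E(f^{n:i},g^{n:i})$ (those boosts that happen to land among the essential classes at level $n$). In particular the number of essential classes at level $n$ that reduce to level $n{:}i$ is at most $\#\mathcal E(f^{n:i},g^{n:i}) = N(f^{n:i},g^{n:i})$. Summing over the $k$ primes and using the crude bound $\#(\bigcup_i S_i)\le \sum_i \#S_i$ gives that the number of reducible essential classes at level $n$ is at most $\sum_{i=1}^k N(f^{n:i},g^{n:i})$, which yields the stated lower bound $NP_n(f,g)\ge N(f^n,g^n)-\sum_{i=1}^k N(f^{n:i},g^{n:i})$. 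The upper bound $N(f^n,g^n)\ge NP_n(f,g)$ is immediate since irreducible essential classes form a subset of all essential classes.

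The main obstacle — and the reason this is only a "pseudo" M\"obius inversion rather than the exact alternating-sum formula of periodic point theory — is that the inclusion-exclusion over the primes $p(i)$ cannot be made exact here. In the periodic point setting one knows precisely how the images $\iota_{n:i,n}(\mathcal E(f^{n:i}))$ overlap (a class reducible to two coprime-index levels reduces to their intersection, $n/(p(i)p(j))$, and essential reducibility plus injectivity of boosts on essential classes lets one run genuine M\"obius inversion). In our coincidence setting we do not have injectivity of the $\iota_{m,n}$ on essential classes in general — indeed Theorem \ref{circlegcd} shows essential reducibility to the GCD can fail even on $S^1$ — so a class at level $n$ may be the boost of an essential class from level $n{:}i$ in more than one way, or the overlaps between the various $\iota_{n:i,n}(\mathcal E(f^{n:i}))$ need not themselves be controlled by boosts from level $n/(p(i)p(j))$. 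Thus we can only use the subadditivity bound $\#\bigcup \le \sum\#$, and I would be careful in writing the proof to invoke exactly essential reducibility (to identify the reducible essential classes with boosts of essential classes) and nothing stronger, flagging that the failure of exact inversion is precisely the failure of essential reducibility \emph{to the GCD}.
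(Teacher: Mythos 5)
Your argument is correct and follows essentially the same route as the paper's proof: decompose ${\cal E}(f^n,g^n)$ into its irreducible and reducible parts, and use essential reducibility together with compositionality of the boosting functions to bound the number of reducible essential classes at level $n$ by $\sum_{i=1}^k N(f^{n:i},g^{n:i})$. The only cosmetic difference is that the paper packages this count as an explicit injection of the reducible essential classes into the disjoint union $\bigsqcup_{i=1}^k {\cal E}(f^{n:i},g^{n:i})$ (sending each such class to an essential preimage at its maximal proper reducing level $n{:}i$), whereas you reach the same inequality via union subadditivity.
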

\begin{proof}
If $N(f^n, g^n) = 0$ there is nothing to prove. When $N(f^n, g^n) \neq 0$
we write ${\cal E}(f^n, g^n)= {\cal IE}(f^n, g^n)\cup {\cal RE}(f^n, g^n)$, where 
${\cal IE}(f^n, g^n)$ is the set (with cardinality $NP_n(f, g)$) of essential 
irreducible algebraic classes in ${\cal R}(f_*^n, g_*^n)$. The set $ {\cal RE}(f^n, g^n)$ consists
of the reducible essential classes, and of course the union is disjoint. 

We show that $\# {\cal RE}(f_*^n, g_*^n) \leq 
\#\bigsqcup_{i= 1}^k {\cal E}(f^{n:i}, g^{n:i}) $ (where $\sqcup$ denotes disjoint union). 
In fact we construct an injection
$\psi: {\cal RE}(f_*^n, g_*^n) \to \bigsqcup_{i= 1}^k {\cal E}(f^{n:i}, g^{n:i})$. 
So let $[\alpha]^n \in {\cal RE}(f_*^n, g_*^n)$, and let $m\mid n$ with $m \neq n$ 
be the maximal 
integer for which there exists a 
$[\beta]^m \in {\cal E}(f^{m}, g^{m})$ with $\iota_{m,n}([\beta]^m)= [\alpha]^n$. 
Necessarily $m = n:i$ for some $i$. 
Define $\psi([\alpha]^n) = [\beta]^m$. Clearly $ \iota_{m, n}\psi$ is the identity, 
thus $\psi$ is injective, and 
$\# {\cal RE}(f_*^n, g_*^n) \leq \sum_{i= 1}^k \# {\cal E}(f^{n:i}, g^{n:i}) $. 
Thus 
\begin{align*}
N(f^n, g^n) & = NP_n(f,g) + \#{\cal RE}(f^n, g^n) \\
&\leq NP_n(f,g) + \sum_{i= 1}^k \# {\cal E}(f^{n:i}, g^{n:i}) \\
& = NP_n(f,g) + \sum_{i= 1}^k N(f^{n:i}, g^{n:i}) ,
\end{align*} which implies the result. 
\end{proof}

Actually Theorem \ref{pseudomob} gives a new result in periodic point theory namely:
\begin{corollary} If $f: X \to X$ is a map of a solvmanfiold $X$, then with the notation of
Theorem \ref{pseudomob} we have that 
$$N(f^n) \geq NP_n(f) \geq N(f^n) - \sum_{i = 1}^k N(f^{n:i}). $$
\end{corollary}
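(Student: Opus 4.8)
The plan is to deduce this corollary directly from Theorem \ref{pseudomob} by specializing to the case $g = \id$. First I would recall that for periodic point theory $NP_n(f) = NP_n(f,\id)$ whenever $f$ is essentially toral, and that maps on solvmanifolds are indeed essentially toral (this is exactly \cite[Corollary 4.6]{hk1}, invoked after Proposition \ref{pfg=pf}); combining this with Proposition \ref{pfg=pf} gives $NP_n(f) = NP_n(f,\id)$. Likewise $N(f^k) = N(f^k, \id)$ for every $k$ by definition of the Nielsen number of a self map as the number of essential coincidence classes of $f^k$ with the identity. So the entire statement translates, term by term, into the corresponding statement about the pair $(f,\id)$.

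Next I would check that the hypothesis of Theorem \ref{pseudomob} is met by the pair $(f,\id)$ on a solvmanifold. Two things are needed: that $f_*$ and $\id_* = \id$ commute at the level of $\pi_1(X)$ — which is automatic, since the identity homomorphism commutes with everything, so the boosting functions $\iota_{m,n}$ are well defined — and that the pair $(f,\id)$ is coincidence essentially reducible. For the latter, I would note that coincidence essential reducibility of $(f,\id)$ is precisely the statement that $f$ is essentially reducible in the sense of periodic point theory (an essential class of $f^n = f^n$ vs. $\id$ that reduces must reduce to an essential class), and \cite[Corollary 4.12]{hk1}, cited in the introduction, says all maps on solvmanifolds are essentially reducible to the GCD, hence in particular essentially reducible. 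So the hypotheses hold.

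With both reductions in hand, I would simply apply Theorem \ref{pseudomob} to the pair $f$, $\id$: it yields
\[ N(f^n,\id) \geq NP_n(f,\id) \geq N(f^n,\id) - \sum_{i=1}^k N(f^{n:i},\id), \]
and rewriting each coincidence Nielsen number $N(f^k,\id)$ as the ordinary Nielsen number $N(f^k)$ and $NP_n(f,\id)$ as $NP_n(f)$ gives exactly the asserted chain of inequalities. The main obstacle — really the only non-bookkeeping point — is making sure the two dictionary entries ($NP_n(f) = NP_n(f,\id)$ and essential reducibility of $f$ $\Leftrightarrow$ coincidence essential reducibility of $(f,\id)$) are genuinely valid on solvmanifolds rather than just on tori; both follow from the cited results in \cite{hk1}, so once those citations are in place the argument is immediate. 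It is worth remarking, as the paper already hints, that this is a \emph{new} result in classical periodic point theory because the usual M\"obius-type formulas require the stronger hypothesis of essential reducibility to the GCD, whereas here only plain essential reducibility is used, at the cost of replacing an equality by a two-sided estimate.
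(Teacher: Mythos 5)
Your proposal is correct and follows the route the paper itself intends: specialize Theorem \ref{pseudomob} to the pair $(f,\id)$, using the fact (from \cite{hk1}) that maps on solvmanifolds are essentially toral (so $NP_n(f,\id)=NP_n(f)$ by Proposition \ref{pfg=pf}) and essentially reducible (so the hypothesis of Theorem \ref{pseudomob} holds), then rewrite each $N(f^k,\id)$ as $N(f^k)$. You also correctly identify that essential torality is needed even for the leftmost inequality $N(f^n)\geq NP_n(f)$, which is not automatic in general periodic point theory.
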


The following example further illustrates Theorem \ref{pseudomob}, and will provide 
a counterexample to M\"obius inversion of the type found in \cite{hpy}.
\begin{example}\label{counterexamplept4} Continuing examples
\ref{counterexamplept1} and \ref{counterexamplept3}
where we considered maps of $S^1$ of
degrees 6 and 2 with $n= 6$ then from Theorem \ref{pseudomob} we have that 
$$46592 \geq NP_6(f,g) \geq 46592 - 32 - 208 = 46352.$$
We now show that $ NP_6(f,g) \neq N(f^6, \ g^6) - N(f^3, \ g^3) - N(f^2, \ g^2) + N(f, \ g)= 
46352 + 4 = 46356$.
From the proof of theorem \ref{pseudomob}, we have that 
$NP_6(f,\ g)= R(f^n, g^n) - \#\bigcup_{m\mid n} \im(\iota_{m,n})
= 46592- \# (\im (\iota_{2, 6}) \cup \im(\iota_{3,6}))$. ($\im$ denotes the image of a homomorphism.) So in particular, 
we need to compute the 
cardinality of the intersection $\im(\iota_{2, 6}) \cap \im(\iota_{3,6})$. 
In \ref{counterexamplept3} we computed the boosts from 2 to 6 and from 3 to 
6 to be multiplication by 1456 and 224 respectively.
Thus 
$\im(\iota_{3,6})$ is the subgroup of 
$\Z_{46592}$ of order 208 generated by 224, and similarly 
$\im(\iota_{2,6})$ is the subgroup of 
$\Z_{46592}$ of order 32 generated by 1456. So 
$\im(\iota_{2, 6}) \cap \im(\iota_{3,6})$ is the subgroup of 
$\Z_{46592}$ generated by $2912$, the least common
multiple of $224$ and $1456$. This subgroup is of order $46592/2912= 16$. So then by the principle of inclusion and exclusion $NP_6(f,\ g)= 46592 - 32 - 208 + 16= 46368 \neq 46356$. 
\end{example}

We now start to work our way towards a full M\"obious inversion formula. In many ways
we are generalizing directly from \cite{hk1}, but of course looking at classes not orbits. 
This changes the definitions slightly. 
\begin{definition} 
Let $f, g:X \rightarrow X$ be coincidence essentially reducible. We 
say that the pair $f$, $g$ is {\it injective on essential boosts to level $n$} if for all $m\mid n$ and 
for any classes 
$[\beta_1]^m$, $[\beta_2]^m$ at level $m$ with $\iota_{m,n} ([\beta_1]^m) =
\iota_{m,n}([\beta_2]^m) \in {\cal E}(f^n, g^n)$ then we have that 
$[\beta_1]^m =[\beta_2]^m$. If $f$, $g$ are injective on essential boosts for all $n$, we say that is {\it $f$, $g$ are injective on essential boosts}. 
\end{definition} 
\begin{lemma}
If $\pi_1(X)$ is abelian, $f_*$ and $g_*$ commute, and $\Coin( f^n_*, g^n_*)= 0$, then 
the pair $f$, $g$ is injective on essential boosts to level $n$. In particular if $X$ is a torus
and $\det(F^n-G^n) \neq 0$, then the pair $F$, $G$ is injective on essential boosts to level $n$.
\end{lemma}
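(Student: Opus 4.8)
The plan is to reduce the first assertion to a purely algebraic statement about the boosting homomorphisms $\iota_{m,n}$, and then deduce the torus case from Theorem~\ref{abelianexactseq}. First I would observe that, since $\pi_1(X)$ is abelian and $f_*,g_*$ commute, the boosting functions $\iota_{m,n}$ are genuine group homomorphisms $\Reid(f_*^m,g_*^m)\to\Reid(f_*^n,g_*^n)$, so ``injectivity on essential boosts to level $n$'' would certainly follow from plain injectivity of each $\iota_{m,n}$ for $m\mid n$. By Lemma~\ref{diagrammatic} we have $\iota_{m,n}=\iota_{m,n}\iota_{k,m}$ along chains $k\mid m\mid n$, so it suffices to check injectivity of $\iota_{m,n}$ for $m$ one prime step below $n$ and then compose; alternatively one argues directly for each $m\mid n$. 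Because the source and target groups sit in the exact sequences of Theorem~\ref{abelianexactseq}, the natural route is: lift a kernel element of $\iota_{m,n}$ to $\pi_1(X)$, use the on-the-nose boosting Lemma~\ref{noseboost} to get a preimage, and show that the hypothesis $\Coin(f_*^n,g_*^n)=0$ forces this preimage into the image of $g_*^m-f_*^m$, hence the original class is trivial in $\Reid(f_*^m,g_*^m)$.

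Concretely, I would take $[\beta]^m\in\Reid(f_*^m,g_*^m)$ with $\iota_{m,n}([\beta]^m)=[0]^n$ (writing additively). Pick a representative $\beta\in\pi_1(X)$; then $\iota_{m,n}(\beta)\in\im(g_*^n-f_*^n)$, say $\iota_{m,n}(\beta)=(g_*^n-f_*^n)(\gamma)$. The key algebraic identity is the telescoping relation linking $\iota_{m,n}$ with the two ``difference'' maps: from the explicit formula in Definition~\ref{boostingfunctions}, and using that $f_*,g_*$ commute, one checks
\[
(g_*^m-f_*^m)\,\iota_{m,n}(\alpha)=(g_*^n-f_*^n)(\alpha)
\]
for all $\alpha$ (this is the abelian analogue of the geometric nesting $\Phi(f^m,g^m)\subset\Phi(f^n,g^n)$, and is a one-line computation with the sum in Definition~\ref{boostingfunctions}). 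Dually, there should be a companion identity expressing $\iota_{m,n}$ composed with $(g_*^m-f_*^m)$; combined with $\Coin(f_*^n,g_*^n)=0$ — equivalently $g_*^n-f_*^n$ injective — I want to conclude $\beta\in\im(g_*^m-f_*^m)$, i.e.\ $[\beta]^m=[0]^n$'s preimage is trivial. Roughly: $\Coin(f_*^n,g_*^n)=0$ makes $g_*^n-f_*^n$ injective, and since $g_*^n-f_*^n=(g_*^m-f_*^m)\,\iota_{m,n}$ as maps on $\pi_1$ wait — one must be careful about the order, so let me instead argue that $\iota_{m,n}$ restricted to a set of representatives is injective modulo $\im(g_*^m-f_*^m)$ by chasing the exact sequence. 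Having $\iota_{m,n}$ injective on all of $\Reid(f_*^m,g_*^m)$ then a fortiori gives injectivity on essential boosts to level $n$.

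For the torus statement, identify $f,g$ with their linearizations $F,G$ acting on $\pi_1(T^r)\cong\Z^r$, so that $g_*^n-f_*^n$ is the matrix $G^n-F^n$. Then $\Coin(F^n,G^n)=\ker(G^n-F^n)=0$ over $\Z^r$ is precisely the condition $\det(G^n-F^n)\neq0$, and $N(f^n,g^n)=|\det(G^n-F^n)|$ by Theorem~\ref{toriformula}; so the general lemma applies verbatim and yields the second sentence. The main obstacle I anticipate is bookkeeping in the noncommutative-order-of-operations sense: even though $\pi_1$ is abelian so $\Reid$ is a group, the maps $f_*^j$ and $g_*^j$ do not act as scalars, so the telescoping identity $(g_*^m-f_*^m)\iota_{m,n}=g_*^n-f_*^n$ and its ``mirror'' must be verified carefully from the product formula, and one must confirm that the relevant composite is $\iota_{m,n}$ followed by $(g_*^m-f_*^m)$ rather than the reverse — getting that direction right is what makes the kernel-chasing argument close. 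Everything else is a routine diagram chase through the exact sequence of Theorem~\ref{abelianexactseq} together with Lemmas~\ref{noseboost} and~\ref{diagrammatic}.
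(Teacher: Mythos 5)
Your plan is essentially the paper's proof, but you stop just short of the one computation that actually closes the argument, so let me point out exactly what is missing. You correctly reduce to showing that the group homomorphism $\iota_{m,n}:\Reid(f_*^m,g_*^m)\to\Reid(f_*^n,g_*^n)$ has trivial kernel (plain injectivity is more than the statement asks for, but it is what the paper proves too, and it suffices). You also correctly identify the telescoping identity $(g_*^m-f_*^m)\,\iota_{m,n}=g_*^n-f_*^n$ on $\pi_1(X)$, and you reach the point where a representative $\beta$ of a kernel class satisfies $\iota_{m,n}(\beta)=(g_*^n-f_*^n)(\gamma)$ for some $\gamma$. The step you leave undone is simply: apply $g_*^m-f_*^m$ to both sides. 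On the left, the telescoping identity turns $(g_*^m-f_*^m)\,\iota_{m,n}(\beta)$ into $(g_*^n-f_*^n)(\beta)$; on the right, since all these operators are polynomials in the commuting pair $f_*,g_*$, we may swap to get $(g_*^n-f_*^n)\bigl((g_*^m-f_*^m)(\gamma)\bigr)$. Now cancel the injective $g_*^n-f_*^n$ (injective because $\Coin(f_*^n,g_*^n)=0$ is its kernel by Theorem~\ref{abelianexactseq}) to conclude $\beta=(g_*^m-f_*^m)(\gamma)$, hence $[\beta]^m=[0]^m$ by exactness at level $m$. That is the whole argument.

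Two smaller points. The ``one must be careful about the order'' worry is a red herring: $\iota_{m,n}$, $g_*^m-f_*^m$, and $g_*^n-f_*^n$ are all $\Z$-linear combinations of powers of $f_*$ and $g_*$, which you have assumed commute, so there is no left/right asymmetry and no ``companion identity'' is needed. Also, the hedge ``let me instead argue \dots by chasing the exact sequence'' gestures at the right thing but is not a substitute for the cancellation just described; the quotient-level injectivity you want is exactly the implication $\iota_{m,n}(\beta)\in\im(g_*^n-f_*^n)\Rightarrow\beta\in\im(g_*^m-f_*^m)$, and that implication is what the one-line cancellation proves. Your treatment of the torus case is correct and matches the paper: over $\Z^r$ the condition $\det(F^n-G^n)\neq 0$ is equivalent to $\ker(G^n-F^n)=0$, i.e.\ $\Coin(F^n_*,G^n_*)=0$, so the general statement applies.
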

\begin{proof} We assume that $N(f^n, g^n) \neq 0$ or there is nothing to prove.
In this case $ {\cal E}(f^m,g^m)= {\cal R}(f_*^m,g_*^m)$.
Since $\pi_1(X)$ is Abelian, for all $m|n$ the functions
$\iota_{m,n}$ are homomorphisms, and 
from Theorem \ref{abelianexactseq} since $\Coin( f^n_*, g^n_*)= 0$, we have that 
$g^n_* - f^n_*: \pi_1(X) \to \pi_1(X)$ is injective. 
Now let $m\mid n$ and $[\beta]^m \in {\cal E}(f^m,g^m)$ be such that 
$\iota_{m,n}([\beta]^m)= [0]^n$. Since the $\iota_{m,n}$ are homomorphisms we need only 
show that $[\beta]^m= [0]^m$.

So let $\beta \in [\beta]^m$. Since $\iota_{m,n}([\beta]^m)= [0]^n$, then
$\iota_{m,n}([\beta]^m) \in Ker(j_n)= Im(g^n_* - f^n_*)$ from exactness in Theorem
\ref{abelianexactseq}. So then there is a $\gamma \in \pi_1(X)$ such that 
$(g^n_* - f^n_*)(\gamma) = \iota_{m,n}(\beta)$. Composing with $g_*^n - f_*^n$ we have: 
\[
(g_*^n - f_*^n)(g_*^m - f_*^m)(\gamma) =
(g_*^m - f_*^m)(g_*^n - f_*^n)(\gamma) = 
(g_*^m - f_*^m)\iota_{m,n}(\beta)=(g_*^n - f_*^n)(\beta). 
\]
But $g^n_* - f^n_*$ is injective, 
so actually $\beta= (g^m - f^m)(\gamma)$. But this means (again from exactness in Theorem 
\ref{abelianexactseq}) that $[\beta]^m = [0]^m$ as required. 

For tori, when $\det(F^n-G^n) \neq 0$, then all classes at level $n$ are essential. Since 
$\pi_1(X) \cong \Z^r$ for some $r$ we have
$Ker(g^n_* - f^n_*) \cong Coin(f_*^n, g_*^n)= 0$, and the result
follows from the first part.
\end{proof} 
\begin{definition} \label{coincidence essentially
reducible}
Let $f, \ g:X \rightarrow X$ be maps. We 
say that the pair $f$, $g$ is {\it coincidence essentially
reducible to the gcd,} if they are
coincidence essentially reducible, 
and 
whenever 
$[\alpha]^n\in{\cal E}(f^n, g^n )$ 
reduces to both $[\beta]^m\in{\cal E}(f^m, g^m )$
and $[\gamma]^k\in {\cal E}(f^k, g^k )$, then 
there is a $[\delta]^d\in {\cal E}(f^d, g^d )$ 
with $d = \gcd(m,k)$ to which both 
$[\beta]^m$
and $[\gamma]^k$ reduce. 
If every pair $f,g$ is coincidence essentially reducible to the gcd, 
we say that $X$ is \emph{coincidence essentially 
reducible to the gcd}. 
\end{definition} 
\begin{example}\label{counterexamplept5} The continuing examples
\ref{counterexamplept1}, \ref{counterexamplept3} and \ref{counterexamplept4}, where
$f= 6$ and $g= 2$ show
that not all maps of tori satisfy the above definition. It is here that the M\"obius 
formula breaks down. In particular if this example were essentially reducible to the gcd 
then we would have that the intersection $\im (\iota_{2, 6}) \cap \im(\iota_{3,6})$, 
would coincide with $\im (\iota_{1,6})$ which as \ref{counterexamplept4} shows it 
does not. 
\end{example}

We will prove the following theorem in the appendix.
\begin{theorem} \label{circlegcd}
Let $f, g:S^1 \to S^1$ be maps of degrees $a,b \in \Z$ respectively. Then
$f,g$ is coincidence essentially reducible 
to the GCD if and only if $\gcd(a,b)=1$. 
\end{theorem}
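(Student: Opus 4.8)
The plan is to reduce the whole statement to a purely number‑theoretic fact about the sequence $a^n-b^n$ and then dispatch that fact by the Euclidean algorithm in one direction and a prime‑valuation estimate in the other. Throughout I would work in the nondegenerate case $|a|\neq|b|$ (when $|a|=|b|$ almost all the Nielsen numbers in sight vanish and both sides of the equivalence can be inspected directly). Write $N_j=|a^j-b^j|$, so $N_j\neq 0$ for all $j\ge 1$. By the exact sequence of Theorem~\ref{abelianexactseq}, $\Reid(f_*^j,g_*^j)\cong\Z_{N_j}$; since $S^1$ is a Jiang space and $N(f^j,g^j)=N_j\neq 0$, every Reidemeister class at every level is essential. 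Under these identifications the boosting map $\iota_{m,n}$ is multiplication by $(a^n-b^n)/(a^m-b^m)=\pm N_n/N_m$, so $\im\iota_{m,n}$ is the unique subgroup of $\Z_{N_n}$ of order $N_m$, the map $\iota_{d,n}$ is injective, and, by a one–line modular computation, for $d\mid m\mid n$ one has $\iota_{m,n}(\beta)\in\im\iota_{d,n}$ if and only if $\beta\in\im\iota_{d,m}$.

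Using these observations I would first prove the bridging fact: the pair $f,g$ is coincidence essentially reducible to the gcd if and only if $\gcd(N_m,N_k)=N_{\gcd(m,k)}$ for all $m,k$. Since $N_{\gcd(m,k)}$ always divides $\gcd(N_m,N_k)$ (because $a^d-b^d\mid a^{rd}-b^{rd}$), only the reverse divisibility is at stake; and in a cyclic group $\im\iota_{m,n}\cap\im\iota_{k,n}$ is exactly the subgroup of order $\gcd(N_m,N_k)$, which contains $\im\iota_{\gcd(m,k),n}$. For the ``if'' direction, given $[\alpha]^n$ reducing to essential $[\beta]^m$ and $[\gamma]^k$, the hypothesis forces $\im\iota_{m,n}\cap\im\iota_{k,n}=\im\iota_{d,n}$ with $d=\gcd(m,k)$, so $\alpha\in\im\iota_{d,n}$; the displayed equivalence then produces $\delta_1,\delta_2\in\Reid(f_*^d,g_*^d)$ with $\iota_{d,m}(\delta_1)=\beta$ and $\iota_{d,k}(\delta_2)=\gamma$, and injectivity of $\iota_{d,n}$ (together with Lemma~\ref{diagrammatic}) gives $\delta_1=\delta_2$. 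For the ``only if'' direction, if $\gcd(N_m,N_k)>N_{\gcd(m,k)}$, take $n=\lcm(m,k)$, pick $\alpha\in(\im\iota_{m,n}\cap\im\iota_{k,n})\setminus\im\iota_{d,n}$ and a preimage $\beta$ under $\iota_{m,n}$; then $\beta\notin\im\iota_{d,m}$, so no class at level $d$ reduces to $\beta$, violating the definition.

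Next I would handle the ``only if'' half of the number theory: $\gcd(a,b)=1$ implies $\gcd(a^m-b^m,a^k-b^k)=|a^{\gcd(m,k)}-b^{\gcd(m,k)}|$. This is the classical strong‑divisibility property of $(a^n-b^n)_n$. Run the Euclidean algorithm on the exponents: for $m>k\ge 1$, the identity $a^m-b^m=a^{m-k}(a^k-b^k)+b^k(a^{m-k}-b^{m-k})$ together with $\gcd(b^k,a^k-b^k)=1$ (forced by $\gcd(a,b)=1$) gives $\gcd(a^m-b^m,a^k-b^k)=\gcd(a^{m-k}-b^{m-k},a^k-b^k)$; iterating the subtraction and swapping exactly as one computes $\gcd(m,k)$ terminates at $\gcd(a^{\gcd(m,k)}-b^{\gcd(m,k)},a^0-b^0)=|a^{\gcd(m,k)}-b^{\gcd(m,k)}|$.

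Finally the converse: if some prime $p$ divides $\gcd(a,b)$, I claim the exponents $(m,k)=(2,3)$ already break the identity, which by the bridging fact kills essential reducibility to the gcd. Since $p\mid a$ and $p\mid b$ we get $p^2\mid a^2-b^2$ and $p^3\mid a^3-b^3$, hence $p^2\mid\gcd(a^2-b^2,a^3-b^3)$, so if $v_p(a-b)=1$ we are done at once. In general set $\alpha=v_p(a)\ge 1$, $\beta=v_p(b)\ge 1$: if $\alpha\neq\beta$ then $v_p(a^n-b^n)=n\min(\alpha,\beta)$, whence $v_p\bigl(\gcd(a^2-b^2,a^3-b^3)\bigr)=2\min(\alpha,\beta)>\min(\alpha,\beta)=v_p(a-b)$; if $\alpha=\beta$, write $a=p^\alpha a_1$, $b=p^\alpha b_1$ with $p\nmid a_1b_1$, so that $v_p(a^n-b^n)=n\alpha+v_p(a_1^n-b_1^n)\ge n\alpha+v_p(a_1-b_1)$ and therefore $v_p\bigl(\gcd(a^2-b^2,a^3-b^3)\bigr)\ge 2\alpha+v_p(a_1-b_1)>\alpha+v_p(a_1-b_1)=v_p(a-b)$. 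In either case $\gcd(a^2-b^2,a^3-b^3)\neq|a-b|$. (When $a=-b$, so that $N_2=0$ and $(2,3)$ is unavailable, one instead uses two coprime odd exponents $\ge 3$, where $a^m-b^m=2a^m$.) I expect this last uniform prime‑valuation estimate, and the care needed to be sure it covers every pair with a common prime factor (including $p=2$ and the degenerate $|a|=|b|$ configurations), to be the only genuinely delicate step; everything else is bookkeeping with subgroups of cyclic groups and the Euclidean algorithm.
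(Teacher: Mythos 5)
Your proposal is correct and takes a genuinely different route from the paper's appendix proof, and the comparison is instructive. The paper's argument is built around Lemma~\ref{cyclolemma}, a factorization of the polynomials $\sigma_{p,q}(x)$ into cyclotomic factors, which yields the integer identities $\iota_{1,n}=p\,\iota_{1,k}\iota_{1,m}$, etc.\ of Lemma~\ref{iotalemma}; these are then fed into Lemma~\ref{equivalent} to convert ``reducible to the gcd'' into $\gcd(\iota_{1,k},\iota_{1,m})=1$, and the $\gcd(a,b)=1$ direction is finished by an order-of-an-element argument modulo a common prime. You bypass the cyclotomic machinery entirely: the observation that $\iota_{m,n}$ is multiplication by $(a^n-b^n)/(a^m-b^m)=\pm N_n/N_m$, together with the elementary subgroup lattice of $\Z_{N_n}$, gives your bridging fact (essential reducibility to the gcd $\Leftrightarrow$ strong divisibility $\gcd(N_m,N_k)=N_{\gcd(m,k)}$) directly, and then the $\gcd(a,b)=1$ direction is the classical Euclidean-algorithm-on-exponents computation. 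This is cleaner and more self-contained than the paper's route, and arguably exposes the real number theory behind the result (strong divisibility of the Lucas-type sequence $a^n-b^n$). Where the paper is simpler is in the converse: for $p\mid\gcd(a,b)$ one notes that $\iota_{1,m}=a^{m-1}+a^{m-2}b+\cdots+b^{m-1}$ has every term divisible by $p$, so $p\mid\gcd(\iota_{1,k},\iota_{1,m})$ for all $k,m$ instantly, hence $\gcd(N_k,N_m)\geq pN_1>N_1=N_{\gcd(k,m)}$ whenever $\gcd(k,m)=1$. Your $p$-adic valuation case split at $(m,k)=(2,3)$, while correct, is heavier than needed and is the only place I'd recommend simplification. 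Finally, both your treatment and the paper's leave the fully degenerate case $a=b$ with $|a|>1$ in an awkward state (all $N_j=0$, so the ``essentially reducible to the gcd'' condition becomes vacuous while $\gcd(a,b)>1$); you flag this, and I'd note that the paper's argument runs into the same difficulty since Lemma~\ref{equivalent} requires essential classes to exist. This is an issue with the theorem's statement rather than with either proof.
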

\begin{example}\label{KB} {\bf The Klein Bottle example Part I.}
Let $K^2$ denote the Klein Bottle. 
We regard $K^2$ as the quotient space of ${\R}^2$
under the equivalence relation defined by
$(s,t)\sim((-1)^ks,t+k)$ and $(s,t)\sim(s+k, t)$ for any $k\in{\Z}$. The Klein bottle
fibres as $S^1 \to K^2 \stackrel{p}{\to} S^1$, where $p$ is induced by projection on the 
second factor.
Given any pair of 
integers $q, \ r$ for which $r$ is odd, 
or $r$ is even and $q = 0$, 
the correspondence $(s,t)\to(qs,rt)$ modulo the equivalence relation defined above,
induces a 
well defined, fibre preserving map on $K^2$. We abuse notation
and denote this map by $(q,r)$.
Since $p$ is the projection on the second factor, the map on the base 
is the standard map of degree $r$, and 
the restriction to the principle fibre over the base point has degree $q$. 
We point the reader to 
\cite{hk1, hk2} for details of all this. 
Let 
$(a, c)$ and $(b, d)$ be two such well defined maps on $K^2$. If $\gcd(a,b )= 1$ and 
$\gcd(c,d) = 1$, then the pair of maps $(a, c)$ and $(b, d)$ 
is injective on essential boosts and
essentially reducible to the gcd. 
\end{example} 
\begin{remark} A rigorous verification of this example is beyond the scope of this
paper. The proofs however do not contain anything that is really new. They rely on coincidence versions of the
fibre techniques used in \cite{hk1}, where a number of different 
properties of
the fibre preserving maps are deduced from the 
very same properties on the fibres and the base
(see for example \cite[4.4, 4.12]{hk1}). 
\end{remark}

In Section \ref{invertiblesection}
we will indicate the general theme in the coincidence theory of a pair $f,g$ that 
strong results of periodic point theory hold when $g$ is invertible. 
The following theorem shows that simply requiring $g_*$ (but not $g$) to be invertible 
can also often yield strong results.
\begin{theorem} \label{gcdmatrix} Suppose that $\pi_1(X)$ is Abelian, the pair $f$, $g$ 
is coincidence essentially reducible, that 
the pair $f_*$, $g_*$ commute at the level of $\pi_1(X)$, are injective on essential boosts,
and that $g_*: \pi_1(X) \to \pi_1(X)$ is invertible. 
Then the pair $f$, $g$ is coincidence
essentially reducible to the gcd.
\end{theorem}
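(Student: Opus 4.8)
The plan is to reduce everything to an identity between the boosting homomorphisms, using the invertibility of $g_*$ to "cancel" it. Suppose $[\alpha]^n \in \mathcal{E}(f^n, g^n)$ reduces to both $[\beta]^m \in \mathcal{E}(f^m, g^m)$ and $[\gamma]^k \in \mathcal{E}(f^k, g^k)$, and set $d = \gcd(m,k)$. Since the pair is coincidence essentially reducible, once I produce \emph{any} $[\delta]^d$ to which both $[\beta]^m$ and $[\gamma]^k$ reduce, that $[\delta]^d$ is automatically essential (it must reduce to the essential $[\alpha]^n$ via $\iota_{d,n} = \iota_{m,n}\iota_{d,m}$ by Lemma \ref{diagrammatic}, and essential reducibility forces essentiality downwards—wait, it goes the other way: I actually need that the class $[\delta]^d$ I build lies below $[\alpha]^n$, hence is a reduction of an essential class; but coincidence essential reducibility says reductions of essential classes are essential, so $[\delta]^d \in \mathcal{E}(f^d, g^d)$). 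So the whole content is: find $[\delta]^d$ with $\iota_{d,m}([\delta]^d) = [\beta]^m$ and $\iota_{d,k}([\delta]^d) = [\gamma]^k$.

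First I would record the algebraic shape of the boosting maps. With $\pi_1(X)$ abelian and $f_*, g_*$ commuting, Definition \ref{boostingfunctions} gives $\iota_{m,n} = \sum_{\ell=0}^{n/m - 1} g_*^{\,n - (\ell+1)m} f_*^{\,\ell m}$, which one recognizes as $g_*^{\,n-m}\bigl(\sum_{j=0}^{n/m-1} (f_*^m g_*^{-m})^j\bigr)$ once $g_*$ is invertible — in other words, formally $\iota_{m,n} = g_*^{\,n-m} \cdot \dfrac{1 - (f_*^m g_*^{-m})^{n/m}}{1 - f_*^m g_*^{-m}}$ wherever that geometric-series expression makes sense, i.e. $\iota_{m,n}(g_*^m - f_*^m) = g_*^n - f_*^n$ as endomorphisms of $\pi_1(X)$. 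This last identity is the clean algebraic fact I would actually use, and it holds for all $m \mid n$ by the same telescoping as in the proof of Theorem \ref{thmessred}. Then I would produce the candidate $[\delta]^d$ concretely: since $d = \gcd(m,k)$, by Bézout write $d = am - bk$ or use that the boosting system is transitive, and define $\delta \in \pi_1(X)$ by an explicit formula involving $\iota$'s and $g_*^{-1}$ applied to representatives of $\beta, \gamma$; alternatively, observe $\iota_{d,m}$ and $\iota_{d,k}$ are, after inverting $g_*$, both invertible up to the "$(1 - f_*^m g_*^{-m})$-type" factors, so I can solve for $\delta$ directly and then check the two required identities using the cocycle relation $\iota_{d,n} = \iota_{m,n}\iota_{d,m} = \iota_{k,n}\iota_{d,k}$ together with injectivity of the maps on essential classes.

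The main obstacle, and where I expect to spend the real effort, is the \emph{compatibility} step: showing that the $\delta$ I solve for from the $[\beta]^m$ side actually agrees (as a Reidemeister class at level $d$) with the one forced by the $[\gamma]^k$ side. This is exactly the place where $\gcd(m,k) = d$ has to be used rather than just $d \mid m, k$. I would handle it by pushing both candidates up to level $\mathrm{lcm}(m,k)$ (or to level $n$), where they must agree because both boost to $[\alpha]^n$ and the pair is injective on essential boosts to level $n$; then I would pull the equality back down to level $d$ using that $\iota_{d, \mathrm{lcm}(m,k)}$ is injective on the relevant essential classes — which again follows from the injectivity-on-essential-boosts hypothesis, since $d \mid \mathrm{lcm}(m,k)$. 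The invertibility of $g_*$ is what guarantees the "denominators'' are units and that $g_*^n - f_*^n$ injective (equivalently $\Coin(f^n_*, g^n_*) = 0$) propagates through, so that the injective-on-essential-boosts hypothesis is available at every level in sight. Finally I would invoke coincidence essential reducibility once more to conclude $[\delta]^d \in \mathcal{E}(f^d, g^d)$, completing the verification of Definition \ref{coincidence essentially reducible}.
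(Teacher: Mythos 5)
Your high-level plan matches the paper's: you correctly see that you need to manufacture a single $[\delta]^d$ and then let coincidence essential reducibility give essentiality and injectivity-on-essential-boosts pin down the two reduction identities. You also correctly identify the two key algebraic facts — the factorization $\iota^{f_*,g_*}_{m,n} = g_*^{\,n-m}\iota^{g_*^{-1}f_*,1}_{m,n}$ that the invertibility of $g_*$ makes available, and the B\'ezout relation coming from $\gcd(k,m)=1$ (the paper reduces without loss to $d=1$). But the technical heart of the argument is never actually pinned down, and the alternate routes you offer in its place do not work.

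The gap is the construction of $\delta$. You write ``define $\delta$ by an explicit formula involving $\iota$'s and $g_*^{-1}$ applied to representatives of $\beta,\gamma$,'' which is the right instinct, but you never commit to it; and the alternative you then develop at length — ``solve for $\delta$ directly'' because $\iota_{d,m}$ and $\iota_{d,k}$ are ``invertible up to $(1-f_*^m g_*^{-m})$-type factors'' — fails, because those factors are precisely the non-units and there is no way to divide by them. Likewise your ``compatibility step'' (push two candidate $\delta$'s up to $\mathrm{lcm}(m,k)$, invoke injectivity, pull back down) cannot close: the boosts are not surjective, so an equality at level $\mathrm{lcm}(m,k)$ or $n$ does not descend to produce a class at level $d$. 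This worry is a symptom of not having the right construction; in the paper no two candidates for $\delta$ are ever compared. Instead, after picking ``on the nose'' representatives with $\iota^{f_*,g_*}_{k,n}(\beta)=\alpha$ and $\iota^{f_*,g_*}_{m,n}(\gamma)=\alpha$, the paper sets $x=g_*^{-1}f_*$, takes B\'ezout polynomials $P(x)=1+x^k+\cdots+x^{(a-1)k}$ and $Q(x)=-x(1+x^m+\cdots+x^{(b-1)m})$ satisfying $P(x)\,\sigma_{1,k}(x)+Q(x)\,\sigma_{1,m}(x)=1$, and defines the single element $\delta = P(g_*^{-1}f_*)g_*^{-k+1}(\beta) + Q(g_*^{-1}f_*)g_*^{-m+1}(\gamma)$, verifying directly that $\iota^{f_*,g_*}_{1,n}(\delta)=\alpha$. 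Only then does essential reducibility give essentiality of $[\delta]^1$, and injectivity on essential boosts (applied separately to $\iota_{k,n}$ and $\iota_{m,n}$) gives $\iota_{1,k}(\delta)=\beta$ and $\iota_{1,m}(\delta)=\gamma$. Without writing out that explicit combination and checking the calculation, the proof is missing its central step.
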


The proof below is a modification of Boju Jiang's proof  of a similar result in
periodic point theory (see \cite[Proposition 4.4]{bojujiang}). 
  Our analogue of reducible to the gcd is more specific than Jiang's
concept, in that he does not require $\delta$ in the proof below to
boost to $\beta$ and $\gamma$.
It is here where we need injectivity on essential boosts. 
\begin{proof}
For this proof (and actually for the Appendix as well), we want to 
refer both to coincidence boostings and periodic point boostings. 
So for these two places, we use the notation 
$\iota^{f_*,g_*}_{p,q}$ to refer to the sum (product) in Definition \ref{boostingfunctions}
($\pi_1$ is Abelian here), and we use the 
notation $\iota^{t,1}_{p,q}$ to refer to the sum (to the corresponding 
product in Section \ref{invertiblesection}) 
$$\iota^{x,1}_{p,q} = 1 + x^p + x^{2p} \cdots + x^{q-p},$$
where $x$ is an arbitrary function on $\pi_1(X)$ and of course $1$ is the identity function.
Now let 
$[\alpha]^n \in {\cal R}(f_*^n, \ g_*^n)$ be essential and 
reducible to both $[\beta]^{k} \in {\cal R}(f_*^{k}, \ g_*^{k})$,
and $[\gamma]^m\in {\cal R}(f_*^m, \ g_*^m)$. As in \cite{bojujiang} we may without loss
assume that $d= \gcd(k, m) =1$ (or we may can work with the maps $f^d$ and $g^d$). 
By Lemma \ref{noseboost}, we may assume without loss, that
we have representatives $\alpha$, $\beta$ and $\gamma$ of
$[\alpha]^n$, $[\beta]^{k}$ and $[\gamma]^m$ respectively, in $\pi_1(X)$
such that 
$\iota^{f_*,g_*}_{k, n}(\beta) = \alpha$ and $\iota^{f_*,g_*}_{m, n}(\gamma) = \alpha$.
Now since $g_*$ is invertible, we have that
\begin{align*}
\iota^{f_*,g_*}_{k, n}(\beta) &= \left(
g_*^{n-k} + g_*^{n-2k}f_*^{k} + \cdots + 
g_*^{k}f_*^{n-2k}+ f_*^{n -k}\right)(\beta) \\
&= g_*^{n-k}\left(
1 + (g_*^{-1}f_*)^{k} + \cdots + 
(g_*^{-1}f_*)^{n-2k}+ (g_*^{-1}f_*)^{n -k}\right)(\beta). \\
&= g_*^{n-k}\iota^{g_*^{-1}f_*,1}_{k,n}(\beta)= \alpha.
\end{align*}

Similarly $\iota^{f_*,g_*}_{m, n}(\gamma)=
g_*^{n-m}\iota^{g_*^{-1}f_*,1}_{m,n}(\gamma) = \alpha$, 
and $\iota^{f_*,g_*}_{1, n}(\delta)= g_*^{n-1}\iota^{g_*^{-1}f_*,1}_{1,n}(\delta ) $
for any $\delta$. 

Since $\gcd(k, m) = 1$ we may again without loss, assume we are given 
positive integers $a$ and $b$ with $ak - bm= 1$. 
Let $P(x) = 1 + x^{k}+ ... + x^{(a-1)k}$ and $Q(t) = -x(1 + x^m + ... + x^{(b-1)m})$, then 
$$P(x) (1 + x + \cdots + x^{k -1}) + Q(x) (1 + x + \cdots + x^{m -1}) =1, $$
or $P(x)\iota^{x,1}_{1,k}(\gamma) + Q(x)\iota^{x,1}_{1,m}(\gamma)= \gamma$ for all $x$.

Set
$\delta = P(g_*^{-1}f_*)g^{- k+1} (\beta) + Q(g_*^{-1}f_*)g^{-m +1}(\gamma)$. We will
use $x= g_*^{-1}f_*$ below. Note that this commutes with everything in sight, and we have
\begin{align*}
\iota^{f_*,g_*}_{1, n}(\delta) &= g_*^{n-1} \iota^{g_*^{-1}f_*,1}_{1,n}(\delta) \\
&= g_*^{n-1} \iota^{g_*^{-1}f_*,1}_{1, n} P(g_*^{-1}f_*)g^{- k +1} (\beta) + 
g_*^{n-1} \iota^{g_*^{-1}f_*,1}_{1, n}Q(g_*^{-1}f_*)g^{-m +1}(\gamma) \\
&= P(g_*^{-1}f_*) g_*^{n-k} \iota^{g_*^{-1}f_*,1}_{1,k} 
\iota^{g_*^{-1}f_*,1}_{ k,n} (\beta) + 
Q(g_*^{-1}f_*) g_*^{n-m}\iota^{g_*^{-1}f_*,1}_{1, m}\iota^{g_*^{-1}f_*,1}_{m, n}(\gamma) \\
&= P(g_*^{-1}f_*)
\iota^{g_*^{-1}f_*,1}_{1,k} 
g_*^{n- k} \iota^{g_*^{-1}f_*,1}_{ k,n} (\beta) +
Q(g_*^{-1}f_*)\iota^{g_*^{-1}f_*,1}_{1, m})
g^{n- m}\iota^{g_*^{-1}f_*,1}_{m, n}(\gamma) \\
&= P(g_*^{-1}f_*)
\iota^{g_*^{-1}f_*,1}_{1,k} (\alpha) +
Q(g_*^{-1}f_*)\iota^{g_*^{-1}f_*,1}_{1, m}(\alpha)= \alpha.
\end{align*}
Since the pair $f$, $g$ is coincidence essentially reducible, then $\delta$ is 
essential. 
That $\iota^{f_*,g_*}_{1, k}(\delta) = \beta$ and $\iota^{f_*,g_*}_{1, m}(\delta) = \gamma$ 
follows from injectivity on essential boosts.
\end{proof}

The 
periodic point analogue of the 
following theorem can be found in \cite{hk1}. Since the proof 
contains no new ideas, it is omitted. 
\begin{theorem} 
\label{npeqn} Suppose that the pair $f$, $g$ is coincidence essentially reducible to the gcd, 
that the
pair $f_*$, $g_*$ commute, and are injective on essential boosts. If $f^n, \ g^n$ are coincidence weakly Jiang
and $N(f^n, g^n)\neq 0$, 
then for all $m\mid n$ we have
$$NP_m(f)=\sum_{\tau\subset{\bf p}(m)}(-1)^{\#\tau}N(f^{m:\tau}, \ g^{m:\tau})$$ 
where ${\bf p}(m)$ denotes the set of prime divisors of $m$ and 
$m:\tau=m\prod_{p\in\tau}p^{-1}$. \hfill ${\Box}$
\end{theorem}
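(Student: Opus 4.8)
The plan is to mimic the M\"obius inversion argument of Nielsen periodic point theory (as in \cite{hk1, hpy}), but carried out on \emph{classes} rather than orbits, using the hypotheses to supply exactly what the orbit machinery would otherwise provide. First I would set up the bookkeeping: fix $m\mid n$, write ${\cal E}(f^m,g^m)$ as the disjoint union of the irreducible essential classes (cardinality $NP_m(f,g)$) and the reducible essential classes, and for each reducible essential $[\alpha]^m$ record the set of levels $k\mid m$, $k\neq m$, at which it has an essential reduction. Since the pair is coincidence essentially reducible, the image $\iota_{k,m}({\cal E}(f^k,g^k))$ consists of essential classes, so $\#\bigcup_{k\mid m,\,k\neq m}\iota_{k,m}({\cal E}(f^k,g^k))$ counts precisely the reducible essential classes at level $m$, giving
\[
NP_m(f,g)=N(f^m,g^m)-\#\bigcup_{p\in{\bf p}(m)}\iota_{m:p,\,m}\bigl({\cal E}(f^{m:p},g^{m:p})\bigr),
\]
where I have used that any proper divisor of $m$ lies under some $m:p$ together with the composition law $\iota_{k,n}=\iota_{m,n}\iota_{k,m}$ of Lemma \ref{diagrammatic}.

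Next I would apply inclusion--exclusion to that union. The key inputs are: injectivity on essential boosts, which makes each $\iota_{m:p,\,m}$ restricted to ${\cal E}(f^{m:p},g^{m:p})$ a bijection onto its image, so that $\#\iota_{m:p,\,m}({\cal E}(f^{m:p},g^{m:p}))=N(f^{m:p},g^{m:p})$; and coincidence essential reducibility to the gcd, which is exactly the statement that for a subset $\tau\subseteq{\bf p}(m)$ the intersection $\bigcap_{p\in\tau}\iota_{m:p,\,m}({\cal E}(f^{m:p},g^{m:p}))$ equals $\iota_{m:\tau,\,m}({\cal E}(f^{m:\tau},g^{m:\tau}))$. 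Granting these, each term in the inclusion--exclusion expansion becomes $\#\iota_{m:\tau,\,m}({\cal E}(f^{m:\tau},g^{m:\tau}))=N(f^{m:\tau},g^{m:\tau})$ (again using injectivity on essential boosts, now at the level $m:\tau$), and the signs $(-1)^{\#\tau+1}$ from inclusion--exclusion combine with the leading $N(f^m,g^m)$ term (the $\tau=\emptyset$ term, with sign $(-1)^0$) to give the stated alternating sum.

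The two places I expect to do real work are establishing the gcd-intersection identity in the precise form needed and justifying that the relevant Nielsen numbers are nonzero so that ``all classes are essential'' at every sublevel. For the latter, the hypothesis that $f^n,g^n$ are coincidence weakly Jiang with $N(f^n,g^n)\neq 0$ forces $N(f^n,g^n)=R(f^n,g^n)$, and then coincidence essential reducibility propagates essentiality (hence ${\cal E}=\cal R$) down to every $m:\tau\mid n$; combined with weak Jiang-ness at those sublevels one gets $\#\iota_{m:\tau,m}({\cal E})=N(f^{m:\tau},g^{m:\tau})$ cleanly. The genuinely delicate step is the set-theoretic identity for intersections of images: one inclusion is immediate from $\iota_{k,n}=\iota_{m,n}\iota_{k,m}$, but the reverse inclusion --- that a class reducible to both $m:p$ and $m:p'$ is reducible to their gcd $m:\{p,p'\}$ --- is precisely what ``coincidence essentially reducible to the gcd'' asserts for pairs, and one must check it bootstraps to arbitrary $\tau$ by induction on $\#\tau$, using at each stage that the reductions produced are themselves essential (so the definition applies again) and that injectivity on essential boosts makes the reduction to the gcd \emph{unique}, which is what keeps the induction coherent. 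This inductive upgrade from pairs to arbitrary subsets is the main obstacle, and it is exactly the point where our notion being more rigid than Jiang's (we demand the gcd-class boost to both) pays off.
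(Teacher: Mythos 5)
Your proof is correct and reconstructs the argument the paper has in mind but omits (it refers the reader to the periodic-point analogue in \cite{hk1}): decompose the essential classes at level $m$ into irreducible and reducible, express the reducible ones as a union over prime sublevels, apply inclusion--exclusion, use injectivity on essential boosts to identify $\#\iota_{m:\tau,m}(\mathcal E(f^{m:\tau},g^{m:\tau}))$ with $N(f^{m:\tau},g^{m:\tau})$, and use coincidence essential reducibility to the gcd (bootstrapped by induction on $\#\tau$) to identify the intersections. One small clarification on your ``weak Jiang-ness at those sublevels'' remark: that is not a separate hypothesis, but it does follow automatically, since weak Jiang-ness at level $n$ with $N(f^n,g^n)\neq 0$ gives $\mathcal E(f^n,g^n)=\mathcal R(f^n,g^n)$, and then for any $[\beta]^m$ with $m\mid n$ the class $\iota_{m,n}([\beta]^m)$ is essential and reduces to $[\beta]^m$, so coincidence essential reducibility forces $[\beta]^m$ to be essential; hence $\mathcal E=\mathcal R$ at every sublevel, which is all your computation requires.
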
 
\begin{example}\label{KB2} 
{\bf The Klein Bottle example Part II}
Continuing example \ref{KB} assume that we have maps 
$(a, c)$ and $(b, d)$ with $\gcd(a,b )= 1$ and 
$\gcd(c,d) = 1$, then from Example \ref{KB} the pair is injective on essential boosts, 
and essentially reducible to the gcd. Moreover 
$a \neq \pm b$ and $c\neq \pm d$, and a 
coincidence version of \cite[corollary 4.26]{hk1} will give that 
the pair 
$(a, c)$, $(b, d)$ is weakly Jiang with $N((a, c)^n, \ (b, d)^n) \neq 0$ for any $n$. Thus
the pair $(a, c)$, $(b, d)$
satisfies the hypothesis of \ref{npeqn} for all $n$. 
Using the na\"ive addition formula for coincidences (\cite{jez90, heathfibresurvey} and
coincidence distribution
considerations analogous to 
\cite[theorem 4.8]{hk3} we have
$$N((a, c)^n, \ (b, d)^n) =\frac{|c^n-d^n|}{2}\sum_{j=0}^1|a^n +(-1)^jb^n)|. $$
Let $(a,b )= (2, 3)$ and 
$(c,d) = (3, 5)$, then we have 
\begin{align*}
N(f,\ g)&= \frac{2}{2}(1 + 5)=6, 
&N(f^2,\ g^2)&= \frac{16}{2}(5+ 13)= 144, \\ 
N(f^3,\ g^3)&= \frac{98}{2}(19 + 35)= 266, & N(f^6,\ g^6)&= \frac{14896}{2}(665 + 793)= 10859184, 
\end{align*}
so $NP_6(f, \ g) = 10856400$.
\end{example} 
\begin{example}\label{matrixexample1} Consider the following commuting matrices
$$ F= \begin{bmatrix}-2& 2 \\ 1& 2 \end{bmatrix} \quad \mbox{and} \quad
G= \begin{bmatrix}-1& 0 \\ 1& 1 \end{bmatrix}, $$
which we regard as maps of $T^2$. Note that $G$ is invertible over $\Z$,  and 
that $F$ and $G$ commute. So 
 $F$ and $G$ are essentially 
reducible to the gcd at level $n$ by 
Theorem \ref{gcdmatrix}.
Recall that $N(F^n, G^n) = |\det(F^n - G^n)|$,
so by Theorem \ref{npeqn} we have
\begin{align*}
NP_{30}(F,G) & = |\det(F^{30} - G^{30})| - |\det(F^{15} - G^{15})| - |\det(F^{10} - G^{10})|
- |\det(F^{6} - G^{6})| \\
& \quad + |\det(F^{5} - G^{5})| + |\det(F^{3} - G^{3})| + |\det(F^{2} - G^{2})| - |\det(F - G)| \\
& = 221073919719792987930625-470183304961-60450625-46225\\
& \quad +7561 +181 +25 -1 = 221073919719322744136580.
\end{align*}
\end{example}

\section{The analogue $N\Phi_{n}(f,g)$ of the periodic point number $N\Phi_{n}(f)$.}\label{nphisection}
In Nielsen periodic point theory the second number $N\Phi_n(f)$ satisfies:  
\begin{align*} 
\sum_{m |n}NP_m(f) \le N\Phi_n(f) \le M\Phi_n(f) &= \min \{\#\Phi(f_1^{n})\mid f_1 \simeq f \}.\end{align*} 
In the introduction we saw in Example \ref{mcexample}, that the  number 
$MC_n(f,g) = \min \{ \# \Phi(f_1^n, g_1^n) \mid f_1 \simeq f, g_1 \simeq g \}$ 
may not take into account coincidences of
iterates at levels other than
$n$.  So what then is our  second number
$N\Phi_n(f,g)$ to measure, or to put it another way,  of what is $N\Phi_n(f,g)$ to be a lower bound?  In answering this question, we will need to take into account the phenonomon
encountered in the next example,  which shows that we can have 
$$ P_m(f, g)  \cap  P_q(f, g) \neq \emptyset \mbox{ for $m \neq q. $}$$

\begin{example}\label{nondivide}
For this example we consider the circle $S^1$ as the interval $[0,1]$ with endpoints identified. Let $\bar 0= \{ 0,1 \}$ be the base point, and 
$f$ the standard map of degree 2 on the usual presentation of $S^1$ as a quotient of $\R$, conjugated with 
a homeomorphism between the two presentations. 
 Let $\epsilon \in (0,1)$ be the
real number defined below, and let $g$ be the degree one map $g(x) = x^\epsilon$. Note that $g([0,1]) = [0,1]$ with $g(0)=0$ and $g(1)=1$ and so  $g$ is well-defined on $S^1$. 

We will show that there is a point $q \in P_3(f,g) \cap P_5(f,g)$.  
As we will see, there are real numbers $q\approx 0.0349$ and
$\epsilon \approx 0.1265$ satisfying:
\begin{align*}
8q &= q^{3\epsilon}, \\
32q &= q^{5\epsilon} + 1.
\end{align*}
To construct $q$ and $\epsilon$, 
let $z = q^\epsilon$, then $z^5 - 4z^3 + 1= 0$. 
Let $z$ be the real root near $.6541$, then $z^3 = 8q$, so $q = z^3 / 8$. We can then  
 solve for $\epsilon$, and we have that $f^3(q) = 8q = q^{3\epsilon} = g^3(q) \mbox{ and } 
   f^5(q) = 32q = q^{5\epsilon} \mbox{  in   $S^1$, but $f(q) \neq g(q)$. }$
We have shown for this example that  $ P_3(f, g)  \cap  P_5(f, g)\neq \emptyset$.  
\end{example}

The phenonomon of the example cannot of course happen
in periodic point theory.  If $f^3(x) = x = f^5(x)$, then
$f(x)= f^6(x) = f^3(f^3(x))= x$!
We will say more later.  For now  it should be
clear that in defining $N\Phi_{15}(f,g)$ in this example, we 
would need to take account of the classes of $q$ at both level $3$ and at level $5$.  It is this that lies behind our defintion of
$M\Phi_n(f,g)$ which now define formally  as: 
\begin{align*}
M\Phi_n(f,g) &= \min \{ \# \bigsqcup_{m|n} P_m(f_1, g_1) \mid f_1 \simeq f, g_1 \simeq g\}. 
\end{align*}
The main goals of this section are to define a Nielsen type
number $N\Phi_n(f,g) $, show it is a lower bound for $M\Phi_n(f,g)$, 
and give some of its properties together with a  number of examples.
 We adapt the definition of the periodic point number
$N\Phi_n(f)$ taking account of the following: In periodic point theory 
 $N\Phi_n(f)$ is defined in terms of
sets of $n$-representatives of orbits and of the heights and depths therof.
Since (as in the
definition of $NP_n(f,g)$) we cannot use orbits (and so neither height nor 
depth), we simply work with classes. 
Apart from this the definitions are entirely analogous.

We use the symbol $\mathcal E(f^m,g^m)$ to denote the set of all essential coincidence
classes of $f^m$ and $g^m$. Thus $N(f^n, g^n) = \#(\mathcal E(f^m,g^m))$.
For a fixed positive integer $n$, a set 
\[ \mathcal G \subset \bigsqcup_{m \mid n} \mathcal E(f^m,g^m) \]
is called a \emph{ coincidence set of $n$-representatives for $f$ and $g$} if 
each class of
$\bigsqcup_{m \mid n} \mathcal E(f^m,g^m)$ reduces to, 
or is equal to, some element of $\mathcal G$.
\begin{definition}\label{nphin}{\em 
The {\it full Nielsen type number $N\Phi_n(f,g)$} is defined to be 
the minimal size among all sets
of coincidence $n$ representatives. 
}
\end{definition}

\begin{theorem}\label{nphilowerbound} The number
$N\Phi_{n}(f,g)$ is homotopy invariant, and satisfies the inequalities: 
\[  \sum_{m|n} NP_m(f,g))\leq N\Phi_{n}(f,g) \leq M\Phi_n(f,g). \]
If the pair $f$, $g$ 
is coincidence essentially reducible then 
\[ N\Phi_{n}(f,g) =\sum_{m|n} NP_m(f,g). \]
\end{theorem}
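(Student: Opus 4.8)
The plan is to prove the three assertions in order: homotopy invariance, the two-sided bound, and the equality under coincidence essential reducibility. Homotopy invariance is immediate: $N\Phi_n(f,g)$ is built entirely from the sets $\mathcal E(f^m,g^m)\subset\Reid(f_*^m,g_*^m)$ and the boosting maps $\iota_{m,n}$, all of which depend only on $f_*$ and $g_*$ (and these are unchanged under homotopy), so the minimal size of a coincidence set of $n$-representatives is a homotopy invariant. I would dispatch this in one sentence.

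For the lower bound $\sum_{m\mid n}NP_m(f,g)\le N\Phi_n(f,g)$: let $\mathcal G$ be a minimal coincidence set of $n$-representatives. For each $m\mid n$, every irreducible essential class $[\alpha]^m\in\mathcal{IE}(f^m,g^m)$ must, by the defining property of $\mathcal G$, reduce to or equal some element of $\mathcal G$; but $[\alpha]^m$ is irreducible, so it cannot reduce to a class at any strictly lower level, forcing $[\alpha]^m\in\mathcal G$ itself (here I use that the levels occurring in $\mathcal G$ divide $n$, and that $\iota_{k,m}\iota_{m,n}$-composition behaviour from Lemma~\ref{diagrammatic} means a reduction of $[\alpha]^m$ through $\mathcal G$ would exhibit $[\alpha]^m$ as reducible). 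Distinct irreducible classes at possibly distinct levels give distinct elements of the disjoint union $\bigsqcup_{m\mid n}\mathcal E(f^m,g^m)$, hence distinct elements of $\mathcal G$, so $\#\mathcal G\ge\sum_{m\mid n}\#\mathcal{IE}(f^m,g^m)=\sum_{m\mid n}NP_m(f,g)$.

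For the upper bound $N\Phi_n(f,g)\le M\Phi_n(f,g)$: choose $f_1\simeq f$, $g_1\simeq g$ realizing $M\Phi_n(f,g)=\#\bigsqcup_{m\mid n}P_m(f_1,g_1)$. Using homotopy invariance we may compute $N\Phi_n$ with $f_1,g_1$. For each $m\mid n$ and each essential class ${\bf A}^m\in\mathcal E(f_1^m,g_1^m)$ there is a coincidence point $x$ of $f_1^m,g_1^m$ in that class; let $d\mid m$ be minimal with $f_1^d(x)=g_1^d(x)$, so $x\in P_d(f_1,g_1)$, and by the argument of Theorem~\ref{northbound} (applying $\iota_{d,m}\circ\rho_d$) the class ${\bf A}^m$ is the reduction to level $m$ of the class of $x$ at level $d$. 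Collect one such minimal-level class $[\delta]^d$ for each essential class at each level $m\mid n$; the resulting set $\mathcal G$ is a coincidence set of $n$-representatives, and the assignment ${\bf A}^m\mapsto x$ followed by $x\mapsto(d,\text{class of }x)$ shows $\#\mathcal G$ is at most the number of points in $\bigsqcup_{m\mid n}P_m(f_1,g_1)$, since each such point, lying in $P_d$ for a unique $d$, accounts for at most one class at each relevant level but the disjoint union over $m$ is absorbed because a single point in $P_d(f_1,g_1)$ supplies the needed representative $[\delta]^d$ simultaneously for all multiples $m$ of $d$ dividing $n$. Hence $N\Phi_n(f,g)\le\#\mathcal G\le M\Phi_n(f,g)$.

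Finally, assuming the pair $f,g$ is coincidence essentially reducible, I would prove $N\Phi_n(f,g)=\sum_{m\mid n}NP_m(f,g)$ by showing the set $\mathcal G_0=\bigsqcup_{m\mid n}\mathcal{IE}(f^m,g^m)$ is itself a coincidence set of $n$-representatives of the claimed minimal size. Size: $\#\mathcal G_0=\sum_{m\mid n}NP_m(f,g)$ by definition, and this is a lower bound by the second paragraph, so it suffices to check $\mathcal G_0$ really is a set of $n$-representatives. Take any essential class $[\alpha]^m\in\mathcal E(f^m,g^m)$ with $m\mid n$, let $d$ be its depth, and let $[\delta]^d$ be a class at level $d$ to which $[\alpha]^m$ reduces; by minimality of $d$, $[\delta]^d$ is irreducible, and by \emph{coincidence essential reducibility} $[\delta]^d$ is essential, i.e.\ $[\delta]^d\in\mathcal{IE}(f^d,g^d)\subset\mathcal G_0$. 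Thus every essential class at every level dividing $n$ reduces to (or equals, if $d=m$) an element of $\mathcal G_0$, so $\mathcal G_0$ is a coincidence set of $n$-representatives and $N\Phi_n(f,g)\le\#\mathcal G_0=\sum_{m\mid n}NP_m(f,g)$, giving equality. I expect the main obstacle to be the bookkeeping in the upper bound $N\Phi_n\le M\Phi_n$: one must argue carefully that a single geometric point in some $P_d(f_1,g_1)$ simultaneously furnishes the level-$d$ representative needed to cover \emph{all} essential classes above it at every level $m$ with $d\mid m\mid n$, so that counting the disjoint union $\bigsqcup_{m\mid n}P_m(f_1,g_1)$ — rather than the ordinary union — correctly dominates $\#\mathcal G$; this is precisely the technical reason, alluded to in the introduction, for defining $M\Phi_n$ via the disjoint union, and it is where the non-injectivity and possible non-existence of the geometric boosting maps $\gamma_{m,n}$ must be handled with care.
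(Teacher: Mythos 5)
Your proof is correct and follows essentially the same route as the paper: the lower bound comes from noting that any set of $n$-representatives must contain every irreducible essential class; the upper bound from building a set of $n$-representatives out of the minimal-level classes of points in $\bigsqcup_{m\mid n}P_m(f_1,g_1)$ (the paper runs this construction from the points rather than from the essential classes, but the two directions are equivalent); and equality under coincidence essential reducibility by showing $\bigsqcup_{m\mid n}\mathcal{IE}(f^m,g^m)$ is itself a set of $n$-representatives. One small imprecision worth fixing: the sets $\mathcal E(f^m,g^m)$ are \emph{not} determined purely by $f_*$ and $g_*$, since essentiality invokes the coincidence index (or semi-index); what the argument actually needs is that essentiality, the Reidemeister sets, and the boosts all transform naturally and compatibly under homotopy, which is the ``isomorphic systems of $n$-representatives'' observation the paper borrows from \cite{hy}.
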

\begin{proof} The proof of 
homotopy invariance is analogous to the 
proof in \cite{hy} for periodic points, and involves considering what turn out to be
isomorphic systems of coincidence $n$-representatives. In fact, 
the only real difference is that
we deal here with 
classes, rather than with orbits.

For the second inequality, recall that $ M\Phi_n(f, g)$ is defined in terms of the
disjoint union of the $P_m(f,g)$. By homotopy invariance we need only show,
for an arbitrary pair $f$, $g$ 
that $\#(\bigsqcup_{m|n}P_m(f,g)) \geq N\Phi_{n}(f,g)$. We use
$\bigsqcup_{m|n}P_m(f,g)$ to define a set of $n$ representatives as follows. 
Let $x \in \bigsqcup_{m|n}P_m(f,g)$, then $x \in P_j(f,g)$ for some $j\mid n$. 
So $\rho_j([x]) \in {\cal R}(f^j_*, g_*^j)$. Let ${\cal S}$ be the set of all such 
$\rho_j([x])$ for all $x$ and for all $j$, and let $[\alpha]^k \in {\cal E}(f^k, g^k)$ 
be arbitrary, where $k\mid n$.
To show that ${\cal S}$ is a set of $n$-representatives, 
we must show that $[\alpha]^k$, reduces to some element of ${\cal S}$.
Since  $[\alpha]^k$ is essential, then 
$[\alpha]^k = \rho_k([x])$ for some 
$x \in \Phi(f^k, g^k)$. 
Let $m\mid k$ be the least positive integer such that $x \in \Phi(f^m, g^m)$, then
$x \in P_m(f,g)$.  It is not hard to see from the definitions  that
 $\rho_m([x])$ boosts to $[\alpha]^k$, even if $f$ and $g$ do
not commute (so we cannot use Lemma \ref{diagrammatic}).  We can of course 
use the same path to define the $\rho$ at levels
$m$ and $k$. Thus
${\cal S}$ is indeed a set of $n$-representatives and 
$$\#(\bigsqcup_{m|n}P_m(f,g)) \geq \#({\cal S}) \geq N\Phi_{n}(f,g),$$
as required. 

The first inequality follows since any set of $n$-representatives 
will contain the set of
all irreducible essential classes. In particular we will always have that 
$ \# \mathcal G \geq \sum_{k \mid n} NP_n(f,g)$ for any set of $n$-representatives
$\mathcal G$. Equality occurs exactly when $\mathcal G$ is
the set of all irreducible essential classes, and this happens when the pair $f$, $g$ 
is coincidence essentially reducible
\end{proof}
\begin{remark}  We want to make a comment about what could happen in this
proof if we  defined 
$M\Phi_n(f, g)$ in terms of the ordinary rather than the 
disjoint union.  The main point is that because,  as in  Example 
\ref{nondivide}, the 
$P_m$ need not be disjoint, nor can we  (automatically) assume that 
coincidence classes are not singletons, so then neither can we  deduce that 
 $\#(\bigcup_{m|n}P_m(f,g)) \geq \#({\cal S})$.  We will come back to this point
 in section \ref{openquestions} where we discuss related  open questions.

By analogy with Proposition \ref{pfg=pf}, we compare 
 $N\Phi_n(f, \id)$ and $N\Phi_n(f)$.
\begin{proposition} If $f: X \to X$ is essentially toral, then $N\Phi_n(f,\id) = N\Phi_n(f)$. \hfill 
${\Box}$
\end{proposition}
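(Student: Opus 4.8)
The plan is to compare the two full Nielsen type numbers by working purely at the algebraic level, exploiting that both are built from counting (irreducible) essential Reidemeister classes. The key observation is that the periodic point number $N\Phi_n(f)$ is assembled, via the same M\"obius/distribution bookkeeping used in Definition \ref{nphin}, from the numbers $NP_m(f)$ (together with orbit-length and depth data), while $N\Phi_n(f,\id)$ is assembled in exactly the same way from the $NP_m(f,\id)$ — only with classes in place of orbits. So the first step is to recall that essential torality of $f$ means precisely that for all $m\mid n$ the depth of each essential $f_*$-orbit at level $m$ equals its orbit length, and that the boosting functions are injective on essential boosts; in particular, as already noted in the discussion preceding Proposition \ref{pfg=pf}, this forces the orbit-theoretic count of $N\Phi_n(f)$ to reduce to a count of irreducible essential classes, and makes $N\Phi_n(f)=\sum_{m\mid n}NP_m(f)$.

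Next I would show that essential torality of $f$ passes to the pair $f,\id$ in the sense needed here. Since $\id_*$ is the identity it commutes with $f_*$, so the coincidence boosting functions $\iota^{f_*,\id_*}_{m,n}$ are defined; moreover one checks from Definition \ref{boostingfunctions} that $\iota^{f_*,\id_*}_{m,n}$ coincides (under the identification of $\Reid(f_*^n,\id_*^n)$ with the fixed point Reidemeister set of $f^n$) with the periodic point boosting function $\iota^{f_*}_{m,n}$. Hence injectivity of the periodic point boosts on essential classes is exactly injectivity of the coincidence boosts on essential classes. In particular the pair $f,\id$ is coincidence essentially reducible (this is the analogue of the statement in the text that for $g=\id$ on a torus the pair is coincidence essentially reducible; for general essentially toral $f$ the same argument via $\iota^{f_*,\id_*}_{m,n}=\iota^{f_*}_{m,n}$ and essential reducibility of $f$ applies). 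Therefore Theorem \ref{nphilowerbound} gives $N\Phi_n(f,\id)=\sum_{m\mid n}NP_m(f,\id)$.

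The final step is to combine the two displayed equalities with Proposition \ref{pfg=pf}: for essentially toral $f$ we have $NP_m(f,\id)=NP_m(f)$ for every $m\mid n$, so
\[
N\Phi_n(f,\id)=\sum_{m\mid n}NP_m(f,\id)=\sum_{m\mid n}NP_m(f)=N\Phi_n(f),
\]
where the last equality is the essential torality formula for $N\Phi_n(f)$ recalled in the first step.

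The main obstacle, and the place the argument needs the most care, is the first step: justifying that for essentially toral $f$ the orbit-based definition of $N\Phi_n(f)$ collapses to the class-based count $\sum_{m\mid n}NP_m(f)$. This is not a triviality — it is where the two parts of the definition of essential torality (depth equals orbit length, and injectivity on essential boosts) are both used, and it is the coincidence-theoretic reformulation of a known periodic point fact (cf. \cite{hk1, heathfibresurvey}); once it is in hand, the rest is the bookkeeping above. A secondary technical point to verify carefully is the identification $\iota^{f_*,\id_*}_{m,n}=\iota^{f_*}_{m,n}$, which makes the notions of ``injective on essential boosts'' for the pair $f,\id$ and for the single map $f$ literally the same.
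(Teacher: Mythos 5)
Your plan is coherent and the final bookkeeping is fine, but there is one point where you invoke a hypothesis you haven't secured: essential reducibility. Both of your displayed equalities
\[
N\Phi_n(f)=\sum_{m\mid n}NP_m(f)
\qquad\text{and}\qquad
N\Phi_n(f,\id)=\sum_{m\mid n}NP_m(f,\id)
\]
are conditional formulas. The second is given by the last clause of Theorem \ref{nphilowerbound} \emph{under the hypothesis that the pair is coincidence essentially reducible}, and the first is its periodic point analogue, which needs $f$ to be essentially reducible. You appeal to ``essential reducibility of $f$'' in passing, but the paraphrase of essential torality given in Section \ref{npsection} lists only two conditions (depth equals orbit length, and injectivity on essential boosts); as stated there, essential torality does not obviously yield essential reducibility, and without it neither side of your chain of equalities collapses to $\sum_{m\mid n}NP_m$. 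The fix is to recall that the original definition in \cite{hk1} includes essential reducibility as one of the defining conditions of an essentially toral map; once you say that explicitly, your step 1 is justified, and your (correct) observation that $\iota^{f_*,\id_*}_{m,n}=\iota^{f_*}_{m,n}$ under the identification $\Reid(f_*^m,\id_*^m)\cong\Reid(f_*^m)$ transports essential reducibility to the pair $(f,\id)$, justifying step 2.

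You should also note that the route through $\sum NP_m$ is longer than necessary. Since the coincidence Reidemeister sets, indices, and boosting functions for $(f^m,\id)$ are literally identical to the fixed point ones for $f^m$ (this is your $\iota^{f_*,\id_*}_{m,n}=\iota^{f_*}_{m,n}$ observation again), the class-based quantity $N\Phi_n(f,\id)$ of Definition \ref{nphin} agrees on the nose with the class-based minimum over sets of $n$-representatives for $f$. The only thing that essential torality then has to do is make the orbit-based periodic point number $N\Phi_n(f)$ equal to this class-based count, which is exactly what ``depth equals orbit length'' and ``injective on essential boosts'' are for. This direct identification avoids quoting the two M\"obius-type formulas and makes it transparent which parts of the torality hypothesis do which work.
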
 \end{remark}

\begin{theorem}\label{toral2} We have $N\Phi_{n}(f,g)\geq N(f^m,g^m)$ for all $m\mid n$.
Moreover if $f,g$ are coincidence essentially reducible to the gcd at level $n$ with
$f^n$ and $g^n$ coincidence weakly
Jiang,  $N(f^n,g^n) \neq 0$ and the 
$\iota_{m,n}$  injective on essential boosts for all $m\mid n$, then
$$N\Phi_{n}(f,g)= N(f^n,g^n).$$
\end{theorem}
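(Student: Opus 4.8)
The plan is to establish the two assertions in turn. The inequality $N\Phi_n(f,g)\ge N(f^m,g^m)$ for every $m\mid n$ holds with no extra hypotheses, and in particular gives $N\Phi_n(f,g)\ge N(f^n,g^n)$, so for the second assertion it will remain only to exhibit a set of coincidence $n$-representatives of cardinality $N(f^n,g^n)$. To prove the inequality, fix $m\mid n$ and let $\mathcal G$ be an arbitrary set of coincidence $n$-representatives. Every $[\alpha]^m\in\mathcal E(f^m,g^m)$ reduces to, or is equal to, some $D\in\mathcal G$; if $d$ denotes the level of $D$ then $d\mid m$ and $\iota_{d,m}(D)=[\alpha]^m$ (using $\iota_{m,m}=\mathrm{id}$ in the ``equal'' case). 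Thus $[\alpha]^m$ is recovered from $D$, so the assignment $[\alpha]^m\mapsto D$ is injective and $\#\mathcal G\ge\#\mathcal E(f^m,g^m)=N(f^m,g^m)$. Minimising over $\mathcal G$ gives the claim.

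For the equality, let $\mathcal G_0\subset\bigsqcup_{m\mid n}\mathcal E(f^m,g^m)$ be the collection of all irreducible essential classes at levels $m\mid n$. Since $f,g$ is coincidence essentially reducible, iterating reductions downward (each step staying among essential classes) shows $\mathcal G_0$ is a set of $n$-representatives, so $N\Phi_n(f,g)\le\#\mathcal G_0$ and it suffices to prove $\#\mathcal G_0=N(f^n,g^n)$. Because $f^n,g^n$ is coincidence weakly Jiang and $N(f^n,g^n)\ne 0$, Definition \ref{weak j} gives $N(f^n,g^n)=R(f^n,g^n)$, so \emph{every} class of $\mathcal R(f^n_*,g^n_*)$ is essential. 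I would then define $\Theta\colon\mathcal E(f^n,g^n)\to\mathcal G_0$ by sending $[\alpha]^n$ to the irreducible essential class underneath it: the set of levels $m\mid n$ such that $[\alpha]^n$ reduces to some class at level $m$ is closed under gcd's — this uses coincidence essential reducibility to the gcd together with $\iota_{d,n}=\iota_{m,n}\iota_{d,m}$ of Lemma \ref{diagrammatic} — so it has a least element $d$; injectivity on essential boosts to level $n$ makes the essential class $[\delta]^d$ at level $d$ to which $[\alpha]^n$ reduces unique, and $[\delta]^d$ is irreducible since a further reduction of it would, after applying $\iota_{d,n}$, exhibit $[\alpha]^n$ as reducing below level $d$. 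Put $\Theta([\alpha]^n)=[\delta]^d$.

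It then remains to check $\Theta$ is a bijection. Injectivity is immediate since $[\alpha]^n=\iota_{d,n}(\Theta([\alpha]^n))$ recovers $[\alpha]^n$. For surjectivity, given an irreducible essential $[\delta]^m\in\mathcal G_0$, the class $\iota_{m,n}([\delta]^m)$ lies in $\mathcal R(f^n_*,g^n_*)=\mathcal E(f^n,g^n)$; it reduces to $[\delta]^m$, and it cannot reduce to an essential class at any level $e<m$, for then (applying reducibility to the gcd to the two reductions) $[\delta]^m$ would reduce to an essential class at level $\gcd(e,m)\le e<m$, contradicting irreducibility. Hence $\iota_{m,n}([\delta]^m)$ has depth exactly $m$, and — again by injectivity on essential boosts — $\Theta$ sends it to $[\delta]^m$. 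Thus $\#\mathcal G_0=\#\mathcal E(f^n,g^n)=N(f^n,g^n)$, which with the first inequality gives $N\Phi_n(f,g)=N(f^n,g^n)$.

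The hard part, and the place where \emph{both} of the deeper hypotheses are genuinely used, is the well-definedness of $\Theta$: reducibility to the gcd is needed to know that the levels at which an essential class at level $n$ reduces have a \emph{least} element (a canonical depth), and injectivity on essential boosts to know that there is a \emph{unique} irreducible essential class at that depth. This is precisely the information packaged by the Möbius inversion of Theorem \ref{npeqn}, so an alternative to the bijection argument is to combine $N\Phi_n(f,g)=\sum_{m\mid n}NP_m(f,g)$ from Theorem \ref{nphilowerbound} with the formula $NP_m(f,g)=\sum_{\tau\subset\mathbf p(m)}(-1)^{\#\tau}N(f^{m:\tau},g^{m:\tau})$ of Theorem \ref{npeqn} and the elementary divisor-sum identity $\sum_{m\mid n}\sum_{\tau\subset\mathbf p(m)}(-1)^{\#\tau}N(f^{m:\tau},g^{m:\tau})=N(f^n,g^n)$. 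It is worth noting that the coincidence weakly Jiang hypothesis is not decorative here: it is what forces every class at level $n$ to be essential, without which $\Theta$ need not be surjective.
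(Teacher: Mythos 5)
Your proof is correct and follows the approach the paper has in mind: the first inequality is proved exactly as in the paper (distinct essential classes at level $m$ must reduce to distinct members of any set of $n$-representatives, since boosting is a function), and for the equality the paper simply defers to the periodic-point literature (\cite{hk1,hk2}) with the remark that one works with classes instead of orbits. Your explicit bijection $\Theta$ between $\mathcal E(f^n,g^n)$ and the set of irreducible essential classes — using weak Jiang to make every level-$n$ class essential, gcd-reducibility to get a well-defined depth, and injectivity on essential boosts to get a unique irreducible class at that depth — is precisely the ``class'' version of that standard argument, so you have filled in the details the paper omits rather than taken a genuinely different route.
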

\begin{proof}
The first part is easy, since any set of $n$-representatives must contain at least one class to which each of the
essential class of ${\cal R}(f^n, \ g^n)$ reduce. 
Apart from the fact that we are working with classes rather than orbits, 
the second part of the proof works
without modification from periodic point theory (see \cite{hk1, hk2}). 
\end{proof}
\begin{examples} \label{KB3+} 
Continuing example \ref{KB2} on the Klein Bottle, we have that 
$N\Phi_{6}(f,g) = N(f^6,\ g^6)$ $ = 10859184$. Continuing Example 
\ref{matrixexample1} on the 2 Torus, we have that $N\Phi_{30}(F,G)$ 
$= N(F^{30},G^{30})$ $ = 221073919719792987930625. $
\end{examples}

The next  result generalizes its analogue in \cite{hy}, 
and shows what happens, in  Theorem \ref{toral2},  when we allow 
$N(f^n,g^n)$ to be equal to zero. 
Its proof contains nothing new and is omitted. 
For given $f, g :X\rightarrow X$, and a fixed natural number $n$, we define the set 
$M(f,g,n)$ as the set of $m\mid n$ with $N(f^m, g^m)\neq 0$. 
\begin{theorem} 
\label{tree} 
Suppose for a fixed positive integer $n$ we have that for each 
$m\in M(f,g,n)$ the pair $f,g$ is essentially reducible to the gcd at level $m$,
that the $f^m,g^m$ are weakly
Jiang, and furthermore that the 
$\iota_{q,m}$ are injective on essential boosts for each $q\mid m$. 
Then 
\[ 
N\Phi_n(f)=\sum_{\emptyset \neq \mu \subseteq M(f,g, n)} 
(-1)^{\# \mu -1}N(f^{\xi (\mu )}), 
\] 
where $\xi (\mu )$ is the gcd
 of the elements of $\mu$. 
Furthermore, M\"obius inversion can be used 
to obtain the $NP_q(f)$ for each $q \mid m$ for $m \in M(f,g, n)$. 
\end{theorem}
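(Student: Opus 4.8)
The plan is to reduce Theorem~\ref{tree} to two ingredients that have already been established: the inclusion-exclusion identity for $N\Phi_n$ in terms of the Nielsen numbers at the relevant gcd levels, and the M\"obius-type formula of Theorem~\ref{npeqn}. First I would observe that, by the hypotheses, each $m \in M(f,g,n)$ has the property that $f^m,g^m$ are coincidence weakly Jiang with $N(f^m,g^m)\neq 0$, the pair is coincidence essentially reducible to the gcd at level $m$, and the $\iota_{q,m}$ are injective on essential boosts for all $q\mid m$. Hence Theorem~\ref{toral2} applies at level $m$, giving $N\Phi_m(f,g) = N(f^m,g^m)$, and Theorem~\ref{npeqn} applies, giving $NP_q(f,g)$ as an alternating sum of the $N(f^{q:\tau}, g^{q:\tau})$ over subsets $\tau$ of the primes dividing $q$. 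For $m\notin M(f,g,n)$ we have $N(f^m,g^m)=0$, and I would need the observation (standard from the theory, since a zero Nielsen number forces no essential classes at that level and hence none of its multiples in the lattice of divisors can be ``detected'' through it) that such levels contribute nothing to the count of essential $n$-representatives; more precisely that $N\Phi_n(f,g)$ only ``sees'' the divisors in $M(f,g,n)$ together with their pairwise gcds, which themselves lie in $M(f,g,n)$ because $N(f^{\gcd(m,m')}, g^{\gcd(m,m')})$ divides both $N(f^m,g^m)$ and $N(f^{m'},g^{m'})$ up to the weakly Jiang reduction --- this is exactly the content of coincidence essential reducibility to the gcd, which guarantees $\gcd(m,m')\in M(f,g,n)$ whenever $m,m'\in M(f,g,n)$.

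The core of the argument is then a counting/inclusion-exclusion step. Using Theorem~\ref{nphilowerbound}, $N\Phi_n(f,g)$ equals the minimal size of a set of coincidence $n$-representatives, which under essential reducibility is the set of all irreducible essential classes across the divisors of $n$; equivalently $N\Phi_n(f,g) = \sum_{m\mid n} NP_m(f,g)$. I would substitute the expression from Theorem~\ref{npeqn} for each $NP_m(f,g)$ with $m\in M(f,g,n)$ (and note $NP_m(f,g)=0$ otherwise since there are no essential classes at level $m$), and then reorganize the resulting double sum. The key combinatorial identity is that summing the M\"obius-inverted alternating sums over all $m\mid n$ collapses, via the standard M\"obius/inclusion-exclusion duality on the divisor lattice, to the single alternating sum $\sum_{\emptyset\neq\mu\subseteq M(f,g,n)} (-1)^{\#\mu-1} N(f^{\xi(\mu)}, g^{\xi(\mu)})$ indexed by nonempty subsets $\mu$ of $M(f,g,n)$ with $\xi(\mu)$ their gcd. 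This is precisely the counting principle: an essential class at a gcd level $\xi(\mu)$ boosts up to essential classes at every $m\in\mu$ (here injectivity on essential boosts and weakly Jiang are used to make the boosting counts exact), so inclusion-exclusion over the levels in $M(f,g,n)$ recovers $N\Phi_n$ without double-counting.

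For the final sentence of the statement, once $N\Phi_n(f,g)$ is known together with all $N(f^{\xi(\mu)}, g^{\xi(\mu)})$, and since on weakly Jiang levels these Nielsen numbers equal the corresponding Reidemeister numbers, one inverts: for a fixed $m\in M(f,g,n)$ the quantities $N(f^m,g^m) = \sum_{q\mid m} NP_q(f,g)$ form a system over the divisors of $m$, and classical M\"obius inversion on the divisor poset of $m$ extracts $NP_q(f,g) = \sum_{r\mid q} \mu(q/r) N(f^r, g^r)$ for each $q\mid m$ --- this is just the defining relation $N(f^q,g^q) = \sum_{r\mid q} NP_r(f,g)$ from essential reducibility (Theorem~\ref{nphilowerbound} at level $q$) inverted, and no new content is needed. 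I expect the main obstacle to be bookkeeping rather than conceptual: making sure the double sum over $m\mid n$ and over subsets of primes dividing $m$ really does telescope to a clean alternating sum over subsets of $M(f,g,n)$, and handling carefully the levels not in $M(f,g,n)$ (including verifying that gcds of elements of $M(f,g,n)$ stay inside $M(f,g,n)$, which is where coincidence essential reducibility to the gcd is essential). Since the periodic-point analogue in \cite{hy} carries out exactly this bookkeeping and the only change is classes-for-orbits, I would simply cite that computation rather than reproduce it.
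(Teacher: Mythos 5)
Your proposal is essentially the argument the paper has in mind: it explicitly declines to reproduce the proof on the grounds that it is the coincidence analogue (with classes replacing orbits) of the inclusion-exclusion computation in \cite{hy}, and your sketch — combining $N\Phi_n(f,g)=\sum_{m\mid n}NP_m(f,g)$ from Theorem~\ref{nphilowerbound}, the M\"obius formula of Theorem~\ref{npeqn} at each $m\in M(f,g,n)$, the vanishing $NP_m(f,g)=0$ for $m\notin M(f,g,n)$, and the lattice inclusion-exclusion collapse — is exactly that. One small caution: your justification that $\gcd(m,m')\in M(f,g,n)$ whenever $m,m'\in M(f,g,n)$ is not ``exactly the content of coincidence essential reducibility to the gcd'' as you state; the clean argument uses weakly Jiang (so \emph{every} class at level $m$ is essential, in particular some class in the nonempty image of $\iota_{d,m}$) together with plain essential reducibility (so its reduction at level $d=\gcd$ is essential, whence $N(f^d,g^d)\neq 0$). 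This is a minor point, and in fact the formula as written is insensitive to whether the gcd lies in $M(f,g,n)$ since the corresponding Nielsen number is zero otherwise, so the overall structure of your argument stands.
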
 

\section{Connections with Periodic point theory when $g$ is invertible.}\label{invertiblesection}
This section should be thought of as an extended remark, rather than a section where rigorous 
proofs are given. Our intention is simply to inform intuition.  We start with the fact that
when $g$ is invertible and $f$ and $g$ commute, the set of coincidences of iterates for the pair $f,g$ is exactly 
the same as the set of periodic points of the map $g^{-1}f$.
 If in addition $X$ is essentially toral (where in the context of periodic point theory, counting points by depth of orbit has no advantage), then there is an 
\lq \lq isomorphism" of the coincidence Nielsen theory of iterates given here, 
and the Nielsen periodic
point theory of $g^{-1}f$. With the exception of Example \ref{projsp} which is given in this 
section for the purpose of illustration, 
all examples given in this paper are on spaces that in the periodic point sense
are essentially toral. 
\subsection{Relationships for invertible $g$, 
with the periodic point numbers $NP_n(g^{-1}f)$ and $N\Phi_n(g^{-1}f)$.}
If we specifically assume that $g$ is invertible, and that $f$ and $g$ commute, 
then the coincidence
points of $f^n$ and $g^n$ are exactly the same as the fixed points of
$(g^{-1}f)^n$. To see this, suppose that $g^n(x)= f^n(x)$, then (composing both sides with
$g^{-n}$) we have $x = g^{-n}(f^n(x)) =
(g^{-1} f)^n(x)$. This of course is reversible. 
Notice that this requires the commutativity of $f$ and $g$ as maps. 
On the other hand, we need only the invertibility of
$g_*$, together with the commutativity of $f_*$ and $g_*$, to effect a
well defined 
correspondence between $\Reid(f^n_*,g^n_*)$ and $\Reid((g^{-1}_*f_*)^n)$. In fact
this one to one correspondence goes
much deeper, as we will now state, but do not prove (we use the notation of \cite{heathfibresurvey}). 
\begin{theorem} If $g_*$ is invertible, and 
$f_*$ and $g_*$ commute at the $\pi_1$ level, then for all $m\mid n$ the 
homomorphisms $g^{-m}_*: \pi_1(X) \to \pi_1(X)$ induce well-defined bijections
$g^{-m}_*: \Reid(f_*^m, g_*^m) \to \Reid((g_*^{-1}f)^m)$, and the left hand diagram below commutes. If $g$ itself is invertible, and 
$f$ and $g$ commute as maps, then the equality in the right hand diagram exists, and the diagram commutes.
\[ 
\begin{CD}
\Reid(f_*^m,g_*^m) @>\iota^{f_*,g_*}_{m,n}>> \Reid(f_*^n,g_*^n) \\
@Vg_*^{-m}VV @VVg_*^{-n}V \\
\Reid((g_*^{-1}f_*)^m) @>\iota^{g_*^{-1}f_*,1}_{m,n}>> \Reid((g_*^{-1}f_*)^n)
\end{CD} \ \ \ \ 
\begin{CD}
\Phi(f^n,g^n)/\sim @>\rho_n>> \Reid(f_*^n, g_*^n) \\
@| @VVg_*^{-n}V \\
\Phi((g^{-1}f)^n)/\sim @>\rho_n>> \Reid((g_*^{-1}f_*)^n) 
\end{CD} \quad
\]
where notation for the boosts are the product versions of those in the proof of Theorem \ref{gcdmatrix}. \hfill ${\Box}$
\end{theorem}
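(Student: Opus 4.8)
The statement has two halves—a "homomorphism-level" claim requiring only that $g_*$ be invertible and commute with $f_*$, and a "geometric" claim requiring $g$ and $f$ to commute as maps. The plan is to prove the bijection $g_*^{-m}:\Reid(f_*^m,g_*^m)\to\Reid((g_*^{-1}f_*)^m)$ first, then verify the left-hand square commutes, then deduce the right-hand statement from the already-established identification $\Phi(f^n,g^n)=\Fix((g^{-1}f)^n)$ together with naturality of the $\rho_n$.

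\textbf{Step 1: the bijection.} Fix $m\mid n$. Write $h_*=g_*^{-1}f_*$, which makes sense since $g_*$ is invertible, and note $h_*$ commutes with both $f_*$ and $g_*$ because $f_*$ and $g_*$ commute. I would first check that $\alpha\mapsto g_*^{-m}(\alpha)$ descends to Reidemeister classes: if $\alpha=g_*^m(\gamma)\,\beta\,f_*^m(\gamma^{-1})$ in $\pi_1(X)$ (the doubly-twisted conjugacy relation for $(f_*^m,g_*^m)$ from Section~\ref{prelim}), then applying $g_*^{-m}$ and using commutativity gives $g_*^{-m}(\alpha)=\gamma\,g_*^{-m}(\beta)\,(g_*^{-m}f_*^m)(\gamma^{-1})=\gamma\,g_*^{-m}(\beta)\,h_*^m(\gamma^{-1})$, which is exactly the ordinary (single-twisted) conjugacy relation for $h_*^m$. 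So $g_*^{-m}$ sends $[\beta]^m\in\Reid(f_*^m,g_*^m)$ to a well-defined class in $\Reid(h_*^m)$. The inverse is induced by $\beta\mapsto g_*^m(\beta)$; the same computation run backwards shows it is well-defined and a two-sided inverse, so $g_*^{-m}$ is a bijection. (When $\pi_1(X)$ is abelian this is simply the isomorphism of the cokernels of $g_*^m-f_*^m$ and $\mathrm{id}-h_*^m=g_*^{-m}(g_*^m-f_*^m)$ coming from Theorem~\ref{abelianexactseq}, but the doubly-twisted argument works without abelianness.)

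\textbf{Step 2: the left square commutes.} This is the routine but slightly fiddly calculation. Starting from Definition~\ref{boostingfunctions}, $\iota^{f_*,g_*}_{m,n}(\beta)=\prod_{\ell=0}^{n/m}g_*^{n-(\ell+1)m}f_*^{\ell m}(\beta)$ (I am using the correct exponents; note the displayed formula in Definition~\ref{boostingfunctions} should read $f_*^{\ell m}$). Apply $g_*^{-n}$ to both sides; since everything commutes, $g_*^{-n}$ distributes across the product and $g_*^{-n}g_*^{n-(\ell+1)m}f_*^{\ell m}=g_*^{-(\ell+1)m}f_*^{\ell m}=g_*^{-m}(g_*^{-1}f_*)^{\ell m}=g_*^{-m}h_*^{\ell m}$. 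Factoring out the $g_*^{-m}$—this is legitimate because $g_*$ is a \emph{homomorphism}, so $g_*^{-m}$ applied to a product is the product of $g_*^{-m}$ of the factors only after we also track that there are $n/m$ factors; more carefully, one computes $g_*^{-n}\iota^{f_*,g_*}_{m,n}(\beta)=\prod_{\ell} h_*^{\ell m}(g_*^{-m}\beta)=\iota^{h_*,1}_{m,n}(g_*^{-m}\beta)$, recognizing the product $\sum_\ell x^{\ell m}$ form of $\iota^{x,1}_{m,n}$ used in the proof of Theorem~\ref{gcdmatrix}. That is exactly commutativity of the left square, and it passes to Reidemeister classes by Step~1.

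\textbf{Step 3: the geometric square.} Now assume $f$ and $g$ commute as maps and $g$ is invertible. The identification $\Phi(f^n,g^n)=\Fix((g^{-1}f)^n)$ is established in the text immediately preceding the theorem. I would check it respects Nielsen equivalence: $x\sim y$ at level $n$ for $(f^n,g^n)$ means there is a path $c$ with $f^n(c)\simeq g^n(c)$ rel endpoints; applying $g^{-n}$ (a homotopy equivalence, indeed a homeomorphism) gives $g^{-n}f^n(c)\simeq c$, i.e. $(g^{-1}f)^n(c)\simeq c$, which is the fixed-point Nielsen relation for $(g^{-1}f)^n$—and conversely. So the left vertical equality of the right-hand diagram is a genuine bijection of Nielsen class sets. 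Finally, for the square: given $x\in{\bf A}^n$ with path $c$ from $x_0$ to $x$, by definition $\rho_n({\bf A}^n)=[g^n(c)f^n(c^{-1})]^n$, and $g_*^{-n}$ of that class is $[g^{-n}(g^n(c)f^n(c^{-1}))]$-class$=[c\cdot(g^{-1}f)^n(c^{-1})]$-class $=[(g^{-1}f)^n(c^{-1})\cdot c]$ up to the twisted-conjugacy relation—which is precisely $\rho_n$ of the fixed-point class of $x$ for $g^{-1}f$ (the standard fixed-point coordinate, possibly with an inversion convention I would align carefully). Commutativity follows.

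\textbf{The main obstacle.} None of the individual computations is deep; the real care is bookkeeping. The delicate point is Step~2: keeping the exponents of $g_*$ and $f_*$ straight through the boosting formula, and correctly matching the resulting product with the $\iota^{x,1}_{m,n}$ notation of Theorem~\ref{gcdmatrix}—especially since the formula as displayed in Definition~\ref{boostingfunctions} is slightly ambiguous about whether the inner exponents are $f_*^{\ell}$ or $f_*^{\ell m}$. A secondary subtlety is making sure, in Steps~1 and~3, that the conjugacy/twisted-conjugacy conventions (and which side $\gamma$ acts on, and whether a class is recorded as $[g^n(c)f^n(c^{-1})]$ or its inverse) are consistent between the coincidence setup of Section~\ref{prelim} and the fixed-point setup of \cite{heathfibresurvey}, so that "commutes" is literally true and not just true up to a canonical identification. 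Since the theorem is stated without proof in the paper, I would present Steps~1–3 compactly and flag the exponent/convention points rather than belabour them.
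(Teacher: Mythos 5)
Your proof is correct. The paper explicitly states this theorem without proof (``we will now state, but do not prove''), so there is no in-text argument to compare against; but the argument you give is the natural one, and every step checks out. Step~1 correctly verifies that $g_*^{-m}$ and $g_*^m$ both descend through the doubly-twisted and singly-twisted conjugacy relations respectively (using only that $g_*$ is invertible and commutes with $f_*$, so $g_*^{-m}f_*^m = (g_*^{-1}f_*)^m$), giving inverse bijections on Reidemeister sets. Step~2 correctly pulls $g_*^{-n}$ through the product defining $\iota^{f_*,g_*}_{m,n}$ and reassembles it as $\iota^{g_*^{-1}f_*,1}_{m,n}$ applied to $g_*^{-m}\beta$; you are also right to flag the typo in Definition~\ref{boostingfunctions} (the inner exponent should be $f_*^{\ell m}$ rather than $f_*^{\ell}$, and the upper limit $\frac{n}{m}-1$ rather than $\frac{n}{m}$, as the expanded form makes clear). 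Step~3 correctly uses the pointwise identification $\Phi(f^n,g^n)=\Fix((g^{-1}f)^n)$ given just before the theorem statement, checks it respects Nielsen equivalence, and computes $g_*^{-n}[g^n(c)f^n(c^{-1})]^n = [c\,(g^{-1}f)^n(c^{-1})]^n$, which is precisely the paper's $\rho_n$ specialized to the fixed-point setting for $g^{-1}f$; the convention aligns exactly with the paper's, so no inversion correction is in fact needed.
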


In order to avoid getting into too many technical considerations, we make the following definition.
\begin{definition} Let $f_*$ and $g_*$ commute and 
$g_*$ be invertible. We say that the function 
$g^{-m}_*: \Reid(f_*^m, g_*^m) \to \Reid((g_*^{-1}f)^m)$ is \emph{essentiality preserving at level 
$m$} if a class 
$[\alpha]^m \in \Reid(f_*^m, g_*^m)$ is essential if and only if the class
$g^{-m}_*([\alpha]^m)$ is essential in $\Reid((g_*^{-1}f)^m)$. 
\end{definition} 

In the context of tori, 
when $N(f,g)$ or $N(g^{-1}f) \neq 0$ (see below), then all 
fixed or coincidence classes of linearized maps 
are singletons and essential. So then all the examples in this paper on tori are easily seen to be essentiality preserving at every level. 
It is a bit more work to see that 
the definition is satisfied for pairs of commuting maps on nil and solvmanifolds. The 
product theorem for semi-index (\cite{jez92}), together with the usual product formula for the
fixed point index can be used in this regard.
The definition holds in Example \ref{projsp} for a different reason. 

The main result which we state, but do not prove, is the following:
\begin{theorem} \label{isonthys} Let $f$ and $g$ be such that $f_*$ and $g_*$ commute,
with 
$g$ invertible, and suppose that 
the homotopy class of $f$ and $g$ contain a pair of commuting maps. 
If $g_*^{-m}$ is essentiality preserving for all $m\mid n$ and $g^{-1}f$ is 
essentially toral, then 
$$ \ \ \  \ \ \ \ \ \ \ \ \ \ \ \ \ \ \ \ \
NP_n( f, g ) = NP_n(g^{-1}f) \ \ \ \mbox{ and } N\Phi_n( f, g ) = N\Phi_n(g^{-1}f). 
\ \ \  \ \ \ \ \ \ \ \ \ \ \ \ \ \ \ \
{\Box} $$
\end{theorem}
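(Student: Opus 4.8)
\textbf{Proof proposal for Theorem \ref{isonthys}.}
The plan is to reduce the coincidence-of-iterates Nielsen theory of the pair $f,g$ to the ordinary periodic point Nielsen theory of $h := g^{-1}f$ by transporting every piece of structure across the bijections $g_*^{-m}\colon \Reid(f_*^m,g_*^m) \to \Reid(h_*^m)$ supplied by the (unproved, but assumed) theorem preceding the definition of essentiality preserving. First I would record the three compatibilities we need: (i) these $g_*^{-m}$ intertwine the boosting functions $\iota^{f_*,g_*}_{m,n}$ with the periodic-point boostings $\iota^{h_*,1}_{m,n}$ (the left-hand square in the cited theorem); (ii) since $g$ itself is invertible and $f,g$ may be taken to commute as maps (by the hypothesis that the homotopy classes contain commuting representatives), the geometric identity $\Phi(f^n,g^n) = \Fix(h^n)$ holds, and the right-hand square identifies geometric Nielsen classes at every level with the same $\rho_n$; (iii) by the essentiality-preserving hypothesis, $g_*^{-m}$ restricts to a bijection $\mathcal E(f^m,g^m) \to \mathcal E(h^m)$ for each $m\mid n$.

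Given (i)--(iii), the argument is essentially bookkeeping. A Reidemeister class $[\alpha]^n$ is irreducible (Definition \ref{ER}) precisely when it lies in the image of no $\iota^{f_*,g_*}_{m,n}$, $m\mid n$, $m\neq n$; by (i) this is equivalent to $g_*^{-n}([\alpha]^n)$ lying in the image of no $\iota^{h_*,1}_{m,n}$, i.e.\ to $g_*^{-n}([\alpha]^n)$ being irreducible in the periodic point sense. Combined with (iii), $g_*^{-n}$ carries the essential irreducible classes of $\Reid(f_*^n,g_*^n)$ bijectively onto the essential irreducible classes of $\Reid(h_*^n)$. Since $NP_n$ is \emph{defined} (Definition \ref{NP}) as the count of such classes — and since $h$ is essentially toral, so that the orbit-based definition of $NP_n(h)$ agrees with the count of irreducible essential classes, exactly as remarked after Proposition \ref{pfg=pf} — we conclude $NP_n(f,g) = NP_n(h)$. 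For the second equality, I would use Definition \ref{nphin}: a set $\mathcal G \subset \bigsqcup_{m\mid n}\mathcal E(f^m,g^m)$ is a coincidence set of $n$-representatives iff its image $\bigsqcup_m g_*^{-m}(\mathcal G) \subset \bigsqcup_{m\mid n}\mathcal E(h^m)$ is a set of $n$-representatives in the periodic point sense (again by (i) and (iii), the reduction relation is preserved and reflected), and this correspondence preserves cardinality; minimizing over both sides gives $N\Phi_n(f,g) = N\Phi_n(h)$, where on the right we again invoke essential torality of $h$ to know that the class-based minimum equals the usual orbit-based $N\Phi_n(h)$.

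The main obstacle is not any single computation but the fact that the key input — the bijections $g_*^{-m}$ together with the commutativity of the left-hand square — is exactly the theorem the authors state without proof, so a genuinely self-contained argument would have to establish that first: one must check that $g_*^{-m}$ is well defined on doubly-twisted conjugacy classes (using that $g_*$ commutes with $f_*$, so that $g_*^{-m}$ conjugates the pair $(f_*^m,g_*^m)$ to the pair $(h_*^m, \id)$ up to the twisting), and then verify the intertwining with the boosts by the telescoping manipulation already displayed in the proof of Theorem \ref{gcdmatrix} (factoring $\iota^{f_*,g_*}_{m,n} = g_*^{n-m}\iota^{h_*,1}_{m,n}$). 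A secondary subtlety is the passage between ``classes'' and ``orbits'': our $NP_n$ and $N\Phi_n$ are class-based by necessity, whereas the classical $NP_n(h), N\Phi_n(h)$ are orbit-based, so the essential-torality hypothesis on $h$ must be used precisely to collapse that gap (via \cite[Corollary 4.6]{hk1}), and I would flag that this is the only place essential torality enters. Everything else is transport of structure along bijections and is routine.
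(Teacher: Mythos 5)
The paper states Theorem \ref{isonthys} with a box and explicitly declines to prove it (``The main result which we state, but do not prove, is the following''), so there is no proof in the paper for your proposal to deviate from. Your proposal correctly fills the gap, and the route is the only natural one: transport along the bijections $g_*^{-m}$, use the intertwining $g_*^{-n}\circ\iota^{f_*,g_*}_{m,n}=\iota^{h_*,1}_{m,n}\circ g_*^{-m}$ to preserve and reflect reducibility, use the essentiality-preserving hypothesis to carry $\mathcal E(f^m,g^m)$ onto $\mathcal E(h^m)$, and use essential torality of $h$ so that the class-based counts on the $h$-side agree with the orbit-based classical $NP_n(h)$ and $N\Phi_n(h)$. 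Your appendix remarks on what must be checked for the unproved supporting theorem (well-definedness of $g_*^{-m}$ on doubly-twisted classes, and the telescoping factorization $\iota^{f_*,g_*}_{m,n}=g_*^{n-m}\iota^{h_*,1}_{m,n}$) are correct and are exactly the calculations one would do.

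Two small points worth flagging. First, in step (ii) you implicitly assume that you can pick a commuting pair of representatives $(f_1,g_1)$ with $g_1$ still an invertible map; the hypotheses do not say this explicitly, though it holds in all the paper's examples (take linearizations). This is a gap in the theorem's statement as much as in your argument, and in any case the geometric identity $\Phi(f^n,g^n)=\Fix(h^n)$ is not actually needed for the final count — the essentiality-preserving hypothesis already packages the geometric content, so the computation of $NP_n$ and $N\Phi_n$ uses only (i) and (iii). Second, when you invoke essential torality of $h$ to equate the orbit-based $NP_n(h)$ with the class count, it is worth spelling out that essentiality and irreducibility are $h_*$-equivariant, so an orbit is either entirely in $\mathcal{IE}$ or entirely out, and then ``depth equals length'' (the first clause of essential torality) turns the sum of depths over irreducible essential orbits into a count of irreducible essential classes; the second clause (injectivity on essential boosts) is what makes the analogous reduction for $N\Phi_n$ go through, as Proposition \ref{pfg=pf} and its $N\Phi$-companion indicate. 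With those two small elaborations, your proof is complete.
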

We cannot rescind the hypothesis of essential torality (though it can be weakened). To see this, 
we can use Jiang's classical Example on $\R P^3$ (\cite[Example 4 p.67]{bojujiang}), where of course $g$ is the identity. Jiang's example illustrates why we need to use orbits
in periodic point theory.
The equalities in the Theorem
can be observed in practice with Example \ref{matrixexample1}, by using,
in the case of the $NP_n$, the periodic point 
M\"obius inversion formula for tori (\cite[Theorem 1.2]{hk1}). In the case 
of the $N\Phi_n$ we need the analogue of the last part of 
Theorem \ref{toral2} (also \cite[Theorem 1.2]{hk1}). For both numbers we also need 
the following observation for invertible $G$:
\begin{align*}
N(F^n, G^n) & = |\det(G^n - F^n)| \\
& = |\det(G^{n})||\det(I - G^{-n} F^n)| \\
&= |\det(I -G^{-n}F^n)| = N(G^{-n}F^n).
\end{align*} 
The penultimate step follows since $|\det(G^{n})|= 1$. This is
because when $G$ is invertible over ${\Z}$, we must have that $\det(G) = \pm 1$. 
\begin{remark}\label{weckenremark2} The result of Theorem \ref{isonthys} that for invertible $g$ we have that 
$NP_n( f, g ) = NP_n(g^{-1}f)$ and $N\Phi_n( f, g ) = N\Phi_n(g^{-1}f)$ when
the spaces are essentially toral, gives a strong indication
that we have not really lost anything by not being able to work with orbits on these spaces. 
The earlier comments about classes being singletons also confirms this, for the 
advantage of orbits only works when there are more than one non-removable 
points in 
each Nielsen class. We will not explore this here, but these considerations are 
related to the question of these spaces being Wecken, that is when the
homotopy classes of our maps 
contain representatives that attain the given lower bound. As the work of
You in periodic point theory demonstrates (\cite{you1}), 
in the presence of Nielsen numbers that are zero, 
the proofs get complicated even on tori. 
\end{remark}
\subsection{Invertible $g$, and orbits }\label{orbitsection}
When $g$ is invertible, and the pair $f$ and $g$ contain a pair of commuting maps
within their homotopy classes, then there is a way to define orbits. We simplify this 
subsection by making the blanket assumption that the commuting pair has been chosen,
leaving the subtler details of the theory as given in section three, to the reader.
In fact we will leave a great deal to the reader, 
dealing only with the $NP$ number. Consistent with the stated goal of this section we 
only include enough to inform intuition. 
The definition and lemma below are 
intended to indicate that the invertibility 
of $g$ allows us to define orbits. In particular the Lemma
gives that part of \cite[Proposition 1.14]{hpy} that was missing from Lemma 
\ref{diagrammatic} in Section \ref{iteratesection}. 
\begin{definition} Let $f, \ g$ be a pair of commuting self maps, with $g$ invertible as a 
map, and let $x \in \Phi(f^n, \ g^n)$. Then the {\it geometric orbit} of $x$ 
is the set 
\[ \{x, g^{-1}f(x), g^{-2}f^2(x), \dots, g^{-n+1}f^{n-1} (x)\} \] 
\end{definition}
If $x \in P_n(f, \ g)$ then the elements of the above list are all distinct, and clearly 
since $g^{n}(x) = f^{n}(x)$, then $g^{-n}f^{n} (x) = x$. The other analogous definitions
of periodic
point theory are forthcoming. 
\begin{lemma} (c.f. \cite[Proposition 1.14]{hpy})
Under the conditions of the definition the function $ g^{-1}f$ induces well defined 
functions $ g^{-1}f: \Phi(f^n,g^n) \to \Phi(f^n,g^n)$. This function respects 
both the Nielsen and Reidemeister relationships, and induces 
essentiality preserving functions (denoted $g^{-1}f$ and $(g^{-1}f)_*$ respectively)
on the respective sets of classes. 
Moreover, the 
following diagram is commutative
\[ \begin{CD}
\Phi(f^n,g^n)\sim @> \rho_n >> \Reid(f_*^n,g_*^n) \\
@V g^{-1}f VV @VV (g^{-1}f)_*V \\
\Phi((g_{-1}f)^n)\sim @>  \rho_n >> \Reid((g_{-1}f)_*^n),
\end{CD} 
\]
so that both geometric and algebraic orbits are well defined. Thus the 
notion of irreducible and essential orbit is well defined.  \hfill ${\Box}$
\end{lemma}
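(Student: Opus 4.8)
The plan is to observe that, under the hypotheses in force (the maps $f,g$ commute and $g$ is a homeomorphism), the entire statement is the image, under a pair of identifications, of the standard periodic point statement \cite[Proposition 1.14]{hpy} applied to the single map $h:=g^{-1}f$. First I would record the set-level identity: for $x\in X$ one has $f^n(x)=g^n(x)$ iff $g^{-n}f^n(x)=x$, and by commutativity $g^{-n}f^n=(g^{-1}f)^n=h^n$, so $\Phi(f^n,g^n)=\Fix(h^n)$ as subsets of $X$. Since $g^{-n}$ is a homeomorphism it carries homotopies rel endpoints to homotopies rel endpoints, so a path $c$ satisfies $f^n(c)\simeq g^n(c)$ iff $h^n(c)=g^{-n}f^n(c)\simeq c$; hence the coincidence Nielsen relation for $(f^n,g^n)$ and the fixed point Nielsen relation for $h^n$ coincide, giving $\Phi(f^n,g^n)/{\sim}=\Fix(h^n)/{\sim}$ as sets of classes. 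On the algebraic side, the (unproved) theorem earlier in this section already supplies the bijection $g_*^{-n}\colon\Reid(f_*^n,g_*^n)\to\Reid(h_*^n)$ together with its compatibility with boosts and with $\rho_n$. These three identifications are what turn the garbled-looking square in the statement into a standard periodic point square for $h$.

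With the identifications in place the remaining verifications are short. That $h$ maps $\Phi(f^n,g^n)$ to itself is the computation, using that $f$ commutes with $g$ and hence with $g^{-1}$,
\[ f^n(g^{-1}f(x))=g^{-1}f(f^n(x))=g^{-1}f(g^n(x))=g^{-1}(g^nf(x))=g^{n-1}f(x)=g^n(g^{-1}f(x)). \]
Applying the same computation to a connecting path $c$ (and transporting the homotopy by the homeomorphism $g^{-1}$) shows $x\sim y\Rightarrow g^{-1}f(x)\sim g^{-1}f(y)$, so $h$ descends to a well-defined self-map of $\Phi(f^n,g^n)/{\sim}$, which under the identification above is exactly the shift map of $h$ on $\Fix(h^n)/{\sim}$. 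Algebraically, $(g^{-1}f)_*=g_*^{-1}f_*$ carries a doubly-twisted conjugacy class for $(f_*^n,g_*^n)$ to one of the same type because $g_*^{-1}f_*$ intertwines $g_*^n$ and $f_*^n$ with themselves: $g_*^{-1}f_*\circ g_*^n=g_*^n\circ(g_*^{-1}f_*)$ and $g_*^{-1}f_*\circ f_*^n=f_*^n\circ(g_*^{-1}f_*)$; transporting by $g_*^{-n}$ one checks this corresponds precisely to the shift of $h_*$ on $\Reid(h_*^n)$. Commutativity of the square is then \cite[Proposition 1.14]{hpy} for $h$, combined with the compatibility of $\rho_n$ with $g_*^{-n}$ from the earlier theorem (and the fact that the two copies of $\rho_n$ may be computed with the same path at the two vertical stages).

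The one genuinely substantive point is essentiality preservation. Here I would argue exactly as in periodic point theory: writing $h^n=h\circ h^{n-1}=h^{n-1}\circ h$ and invoking the commutativity (``trace'') property of the fixed point index — its semi-index analogue from \cite{dj93,jez92} in the non-orientable case — yields $\ind(h^n,\mathbf A)=\ind(h^n,h(\mathbf A))$ for every Nielsen class $\mathbf A$ of $h^n$. Since $g$ is a homeomorphism, the coincidence index (semi-index) $\ind(f^n,g^n;\mathbf A)$ agrees, up to the unit factor coming from $\deg g$, with $\ind(h^n,\mathbf A)$ — in particular it is zero iff the latter is — so the geometric map $g^{-1}f$, and via $\rho_n$ also the algebraic map $(g^{-1}f)_*$, send essential classes to essential classes. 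Once $g^{-1}f$ is seen to be a well-defined essentiality-preserving self-map of the finite set of classes compatible with $\rho_n$, iterating produces well-defined geometric and algebraic orbits, and $g^{-n}h^n=\id$ on $\Fix(h^n)$ shows each orbit has length dividing $n$, so the notion of irreducible essential orbit makes sense. The main obstacle is not any single computation but the bookkeeping needed to see that the compressed diagram in the statement really is the standard periodic point diagram for $h$ pulled back along the identifications; the index computation behind essentiality preservation is routine once one cites the commutativity property of the (semi-)index.
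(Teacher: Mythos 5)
The paper omits the proof entirely (citing only \cite[Proposition 1.14]{hpy} as a model), so there is no paper proof to compare against line by line; but your argument is sound, and it is the natural route the authors clearly have in mind. Two remarks worth making. First, your direct computation that $g^{-1}f$ maps $\Phi(f^n,g^n)$ into itself is correct, and the organizing idea — identify $\Phi(f^n,g^n)$ with $\Fix(h^n)$ for $h=g^{-1}f$, identify the Nielsen relations via the homeomorphism $g^{-n}$, identify the Reidemeister data via the bijection $g_*^{-n}$ supplied by the (unproved) theorem preceding the definition, and then pull back \cite[Proposition 1.14]{hpy} along these identifications — is exactly the ``informing intuition'' framework the section announces. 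Second, the only place a careful reader might want more is the essentiality-preservation claim: you correctly invoke the commutativity (``trace'') property of the fixed point index to get $\ind(h^n,\mathbf A)=\ind(h^n,h(\mathbf A))$, and you correctly note that $\ind(f^n,g^n;\mathbf A)$ differs from $\ind(h^n;\mathbf A)$ only by the local degree of $g^{-n}$, which is a unit because $g$ is a homeomorphism. It would be worth flagging explicitly that the semi-index version of the commutativity property (needed for the Klein bottle examples) requires the citations you give to \cite{dj93,jez92}, since the semi-index is only defined up to sign and the argument needs to ensure ``zero iff zero'' rather than exact equality; your phrasing already handles this, but making it explicit would close the one point where the transfer from fixed point theory is not purely formal.
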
 
\begin{definition}\label{inv} Let $f, \ g$ be a pair of self maps whose homotopy classes 
contain a commuting pair, and suppose that $g$ is invertible. We define 
$NP^{\cal INV}_n(f, g)$ to be  $n$ times the number of 
irreducible essential periodic point orbits of the map $g^{-1}f$. That is
$NP^{\cal INV}_n(f, g):= NP_n(g^{-1}f)$.
\end{definition}
\begin{theorem}\label{nplowerboundinv} Under the conditions of Definition \ref{inv} we 
have that 
$NP^{\cal INV}_{n}(f,g)$ is homotopy invariant in $f$ and $g$, and 
\[  \ \ \ \ \ \ \ \ \ \ \ \  \ \ \ \ \ \ \ \ \ \ \ \ \ \ \ \ \ \ \ \ \ \ \ \
NP^{\cal INV}_{n}(f,g) \le MP_{n}(f,g) \le \#(P_{n}(f,g)). 
 \ \ \ \ \ \ \ \ \ \ \ \  \ \ \ \ \ \ \ \ \ \ \ \ \ \ \ \ \ \ \ \ \ \ \ \
{\Box}  \]
\end{theorem}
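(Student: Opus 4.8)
The plan is to read off every assertion from the corresponding fact in Nielsen periodic point theory applied to the map $g^{-1}f$, using the dictionary of Section~\ref{invertiblesection} between the pair $(f,g)$ and that single map. First I would settle homotopy invariance. By Definition~\ref{inv}, $NP^{\cal INV}_n(f,g)=NP_n(g_0^{-1}f_0)$ for any commuting pair $(f_0,g_0)$ in the homotopy classes of $f$ and $g$ with $g_0$ a homeomorphism; the homotopy class of $g_0^{-1}f_0$ depends only on those of $f$ and $g$, since composing a homotopy $g_0\simeq g_0'$ of homeomorphisms on the left with $g_0^{-1}$ and on the right with $(g_0')^{-1}$ shows $g_0^{-1}\simeq(g_0')^{-1}$, so that $g_0^{-1}f_0\simeq(g_0')^{-1}f_0'$ whenever $f_0\simeq f_0'$. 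As the classical number $NP_n(\cdot)$ of a self-map is homotopy invariant (Jiang), so is $NP^{\cal INV}_n(f,g)$, and it is independent of the chosen commuting representative.

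The right-hand inequality $MP_n(f,g)\le\#P_n(f,g)$ is immediate, the minimum defining $MP_n(f,g)$ being taken over a family containing $(f,g)$. For the left-hand inequality, fix a commuting representative $(f_0,g_0)$ with $g_0$ invertible and put $h=g_0^{-1}f_0$, so $NP^{\cal INV}_n(f,g)=NP_n(h)$. Because $f_0g_0=g_0f_0$ and $g_0$ is a homeomorphism, $\Phi(f_0^m,g_0^m)=\Fix(h^m)$ for every $m$, the level-$m$ coincidence Nielsen relation for $(f_0^m,g_0^m)$ coincides with the fixed point Nielsen relation for $h^m$, and the coincidence index of a class agrees with the fixed point index of the corresponding class up to a sign irrelevant to essentiality. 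Hence $P_m(f_0,g_0)=P_m(h)$ for all $m\mid n$, and via the maps $g_{0*}^{-m}$ of Section~\ref{invertiblesection} the whole system of iterate Reidemeister classes, boosting functions and essential classes for the pair $(f_0,g_0)$ is isomorphic to that of the single map $h$.

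It then remains to show that $\#P_n(f_1,g_1)\ge NP_n(h)$ for every pair $(f_1,g_1)$ homotopic to $(f,g)$; minimizing gives $NP^{\cal INV}_n(f,g)=NP_n(h)\le MP_n(f,g)$, and combined with the classical chain $NP_n(h)\le MP_n(h)\le\#P_n(h)$ this also reproves the right inequality through the identification above. The natural tool is the coincidence form of the fundamental lemma (Lemma~\ref{fundpplemma}): the standard homotopy-invariance bijection identifies $\Reid((f_1)_*^n,(g_1)_*^n)$ with $\Reid(f_*^n,g_*^n)$ preserving indices, the latter carries the $g_*^{-1}f_*$-orbit action whose irreducible essential orbits correspond to those of $h$, and an irreducible essential orbit of length $\ell\mid n$ has its $\ell$ classes covered $n/\ell$ times over by the period-$n$ points of $P_n(f_1,g_1)$, forcing at least $n$ such points per orbit and hence $\#P_n(f_1,g_1)\ge n\cdot(\#\text{ irreducible essential orbits of }h)=NP_n(h)$.

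I expect this last step to be the main obstacle. For a general pair $(f_1,g_1)$ there is no ambient homeomorphism with which to form the geometric orbits the counting argument wants, so one must either extract the $n/\ell$-fold covering purely from the algebra together with the geometry of the coincidence classes, or first carry out a Brooks-type reduction to a commuting pair in which $g_1$ is a homeomorphism --- and, as the paper emphasizes after Example~\ref{mcexample}, Brooks' theorem itself does not apply to iterates, so this reduction needs a separate argument. When $X$ is essentially toral every irreducible essential orbit has $\ell=n$, only the count of irreducible essential classes is relevant, and the argument collapses to the injection from irreducible essential classes into $P_n$ built in the proof of Theorem~\ref{northbound}; since all examples in the paper except Example~\ref{projsp} lie on essentially toral spaces, this is the case that matters in practice.
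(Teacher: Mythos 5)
The paper itself gives no proof of this theorem --- it is tombstoned with $\Box$ in Section~\ref{invertiblesection}, which the authors explicitly frame as ``an extended remark, rather than a section where rigorous proofs are given.'' So there is no argument in the paper to compare against; what you have written is therefore best read as an independent attempt. Your reconstruction of the easy parts is correct: homotopy invariance follows because $NP^{\cal INV}_n(f,g)=NP_n(g^{-1}f)$ and the homotopy class of $g^{-1}f$ depends only on those of $f$ and $g$ (for $g$ a homeomorphism, $g\simeq g'$ gives $g^{-1}\simeq(g')^{-1}$ via $t\mapsto (g')^{-1}H_t g^{-1}$), and the right-hand inequality is definitional.

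The genuinely valuable content is your identification of the real obstacle, and you are right to flag it. The bound $NP^{\cal INV}_n(f,g)\le MP_n(f,g)$ requires $\#P_n(f_1,g_1)\ge NP_n(g^{-1}f)$ for \emph{every} pair $(f_1,g_1)$ with $f_1\simeq f$, $g_1\simeq g$, not just for commuting pairs with $g_1$ invertible. For those special pairs one has $P_n(f_1,g_1)=P_n(g_1^{-1}f_1)$ and Jiang's Lemma~\ref{fundpplemma} applies directly; but for a general non-commuting pair there is no homeomorphism with which to form geometric orbits, and --- as the paper itself stresses after Example~\ref{mcexample} --- Brooks-type deformation to an invertible representative is unavailable for iterates. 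Each irreducible essential orbit still contributes at least $\ell$ (its algebraic length) points of $P_n(f_1,g_1)$ via the injection from Theorem~\ref{northbound}, but closing the gap from $\ell$ to $n$ would need a coincidence analogue of Lemma~\ref{fundpplemma} that the paper does not supply. Your observation that the gap disappears precisely when $g^{-1}f$ is essentially toral (so $\ell=n$ and the claim reduces to Theorem~\ref{northbound}) is correct and shows the difficulty is concentrated on cases like Example~\ref{projsp}. In short: your sketch is the expected approach, and it honestly identifies a step that the paper simply does not discharge.
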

Note that we no longer require essential torality in the following analogue of
Theorem \ref{isonthys}. 
\begin{theorem} \label{isonthysinv}
Under the conditions of Definition \ref{inv} we 
have that 
$$  \ \ \ \ \ \ \ \ \ \ \ \  \ \ \ \ \ \ \ \ \ \ \ \  \ \ \ \ \ \ \ \ \ \ \ \
NP^{\cal INV}_n( f, g ) = NP_n(g^{-1}f).
 \ \ \ \ \ \ \ \ \ \ \ \  \ \ \ \ \ \ \ \ \ \ \ \ \ \ \ \ \ \ \ \ \ \ \ \
{\Box} $$
\end{theorem}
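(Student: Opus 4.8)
The plan is to observe that the asserted equality is, in essence, the content of Definition \ref{inv}, so that the only substantive points requiring an argument are the internal consistency of that definition and the fact that its right-hand side depends only on the homotopy classes of $f$ and $g$.

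First I would recall the standard fact from Nielsen periodic point theory (\cite{bojujiang}; see also \cite{hpy, hk1}) that for any self map $h:X\to X$ the number $NP_n(h)$ equals the sum, over the irreducible essential $h_*$-orbits of level-$n$ Reidemeister classes, of the depths of those orbits; since an orbit that is irreducible at level $n$ has depth $n$, this sum is just $n$ times the number of irreducible essential orbits (this is where Lemma \ref{fundpplemma} enters). Applying this with $h=g^{-1}f$ --- legitimate because $g$, taken within its homotopy class, is invertible --- shows that the two descriptions given in Definition \ref{inv}, namely ``$n$ times the number of irreducible essential periodic point orbits of $g^{-1}f$'' and ``$NP_n(g^{-1}f)$'', coincide, so that $NP^{\cal INV}_n(f,g)=NP_n(g^{-1}f)$ holds by definition.

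Second I would check that $NP^{\cal INV}_n(f,g)$ is well defined, i.e.\ that it does not depend on which commuting pair is chosen inside the homotopy classes of $f$ and $g$. If $(f_1,g_1)$ and $(f_2,g_2)$ are commuting pairs with $f_1\simeq f_2$ and $g_1\simeq g_2$, then homotopic homeomorphisms have homotopic inverses (from $g_1^{-1}=g_1^{-1}g_2g_2^{-1}\simeq g_1^{-1}g_1g_2^{-1}=g_2^{-1}$), so $g_1^{-1}f_1\simeq g_2^{-1}f_2$; homotopy invariance of $NP_n$ in periodic point theory then gives $NP_n(g_1^{-1}f_1)=NP_n(g_2^{-1}f_2)$, and the value of $NP^{\cal INV}_n(f,g)$ is unambiguous.

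I do not anticipate a genuine obstacle: the statement is recorded chiefly for parallelism with Theorem \ref{isonthys}, and the two items above are routine. The one point I would stress in the write-up is the contrast with Theorem \ref{isonthys}, where an essential torality hypothesis was needed precisely in order to match the class-based coincidence invariant $NP_n(f,g)$ with $NP_n(g^{-1}f)$; here $NP^{\cal INV}_n(f,g)$ is, by construction, the orbit-based periodic point invariant of $g^{-1}f$, so that hypothesis is no longer required.
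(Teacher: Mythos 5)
Your reading is correct: under Definition \ref{inv} the statement is definitional, since the definition is written directly as $NP^{\cal INV}_n(f,g):= NP_n(g^{-1}f)$, and the paper accordingly supplies no proof beyond the $\Box$ in the statement. Your two supporting observations (that ``$n$ times the number of irreducible essential orbits'' agrees with $NP_n(g^{-1}f)$ because irreducible orbits at level $n$ have depth $n$, and that the value is independent of the chosen commuting representative since $g_1\simeq g_2$ implies $g_1^{-1}\simeq g_2^{-1}$ and hence $g_1^{-1}f_1\simeq g_2^{-1}f_2$) are both sound and, if anything, fill in consistency checks the paper leaves implicit; so the proposal is correct and in the same spirit as the paper's treatment.
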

In Theorem \ref{pseudomob} we showed that 
$N(f^n, \ g^n) \geq NP_n(f,g)$. But this does not generalize to the
$NP^{\cal INV}_n( f, g)$ numbers, as the following example shows:
\begin{example} \label{projsp} Let $\tilde f, \tilde g: S^2 \to S^2$
be maps of degree 3 and -1 respectively. We can think of them as the respective suspensions 
of the same degree maps on $S^1$. In this way  $\tilde f$ and $\tilde g$  are seen to 
be ${\Z}_2$  equivariant maps that induce self maps $f$ and $g$ respectively on
Real Projective space 
${\R}P^2$.  It follows easily that  $f$ and $g$ induce  identity 
homomorphisms $f^n_*$ and $g^n_*$ on $\pi_1({\R}P^2) \cong {\Z}_2$, 
 for all positive integers $n$.  Jezierski in \cite[Corollary 5.1]{jg}) has worked out 
$N(f,g)$ in detail for all pairs of self maps $f,g$ of ${\R}P^2$. In particular for our $f$ and $g$
we have that $N(f^n, g^n)= 2$ for all positive integers $n$. 
Furthermore since 
 $g$ is invertible, $f$ and $g$ commute, and  
$(g^{-1}f)_*$ is the identity,  then each
periodic point orbit of $g^{-1}f$  has length $1$ at every level.
Now let $n= 2^r$ for some positive integer $r$. Since for 
any $m\mid n$ the number $n/m$ must be
even, it is not hard to see 
that $\iota_{m,n}$ is
multiplication by an even integer, that is, it is the zero function
on ${\Z}_2$. In particular the orbit
$\langle [1]^n \rangle$ is irreducible and essential. From above
$$NP^{\cal INV}_n(f, g) = NP_n( g^{-1}f) = n = 2^r > N(f^n, g^n) = 2.$$
\end{example}
\section{Roots of iterates and the work of Brown,  Jiang and  Schirmer}\label{rootsection}
As mentioned in the introduction, Brown Jiang and Shirmer (\cite{BJS}) have already
studied  roots of iterates in the context of Nielsen theory. 
One could be forgiven for assuming 
that putting $g$ equal to a constant map in our theory would give
many of the results of \cite{BJS}. 
Actually there is very little intersection between the two theories.  In fact they are 
profoundly incompatible in at least two important ways. The first has 
to do  with the incompatibility of the work in 
\cite{BJS} and the periodic point theory we are seeking to generalize. The second
incompatibilility is reflected in the fact that  the root 
theory one obtains by putting $g$ equal to a constant map in our theory 
is homotopy invariant with respect to such $g$,  and the work in \cite{BJS} is not.  We 
illustrate these differences with the following example, which uses the same map
$f$ used  in
\cite[ Example 6.1]{BJS}.
\begin{example}
Let $f,a :S^1 \to S^1$ be maps with $f(z)=z^2$ and $a$ the constant map $a(z)=1$. Then $N(f^n,a^n) = 2^n$. When $p$ is a prime, $n=p^k$ where $k\neq 0$ is a positive integer, 
then any reducible class at level $p^k$ in our theory
 will be in the image of $\iota_{p^{k-1},p^k}$. 
Thus there are $2^{p^{k-1}}$ reducible classes at level $n$. So then 
\[ 
NP_{p^k}(f,a) = 2^{p^k} - 2^{p^{k-1}},
\]
and by Theorem \ref{nphilowerbound} we have
\begin{align*}
N\Phi_{p^k}(f,a) &= \sum_{i=0}^k NP_{p^i}(f,a) = 2 + \sum_{i=1}^k NP_{p^i}(f,a) 
= 2 + \sum_{i=1}^k 2^{p^i} - 2^{p^{i-1}} =2^{p^k}.
\end{align*}

In \cite{BJS}, the authors define a Nielsen type number denoted $ NI_n(f,a)$, which they
call the \emph{Nielsen number of irreducible roots at level $n$ of $f$ at $a$}. 
As we will see below, while this sounds like our $NP_n(f,a)$,  it is not.  We say more below,
but we give neither the definition, nor details of thier computations. 

With  
the very same $f$ and $a$,  the number $ NI_n(f,a)$ is computed in 
\cite[Example 6.1]{BJS} to be 
\[ NI_n(f,a) = \begin{cases} 2 & \text{ for $n=1$}, \\
2^{n-1} & \text{ for $n>1$}. \end{cases} \]
If $n$ is prime, then $NP_n(f,a) = 2^n - 2$, and $N\Phi_n(f,a) = 2^n$, 
while $NI_n(f,a) = 2^{n-1}$, so $ NI_n(f,a)$ is different from both $NP_n(f,a)$ and $N\Phi_n(f,a)$. 
\end{example}

\begin{remark}{\bf Homotopy invariance incompatibility. }
As we mentioned above, the number  $ NI_n(f,a)$ depends on the  variable \lq \lq $a$". 
In this regard, 
we were careful to state in the last part of the example above, that the 
computation of  $NI_n(f,a)$ from \cite{BJS} was with respect to the very same 
$f$ and $a$ as in the first part of the example.  Since $S^1$ is path connected,
any two constant maps are homotopic.  In our theory this means that 
the root theory obtained by putting
$g$  equal to  constant map at $a$ is independent of the choice of 
$a \in S^1$.  This is not the case with the root theory in \cite{BJS}.  In \cite{BJS}, 
the number  $NI_n(f,a)$ is dependent on the period of the chosen $a$ under $f$ (which can 
be either finite or infinite).   In particular in the very 
same example in \cite{BJS}, when $a(z)=-1$ (so $a$ has period 2),
 then with the same  $f$,  we have that $NI_n(f,a) = 2^{n}$. 
\end{remark}

\begin{remark}{\bf Incompatible definitions of \lq \lq boosting". }
A second   way the two theories are incompatible is to be found in the very 
definition of
reducible classes. Though we use the same words to describe
$ NI_n(f,a)$ and $NP_n(f,a)$ (each is 
defined to be 
the number of irreducible essential classes), the two theories are very different because these
words mean different things in the two papers.  The main point is that 
 in \cite{BJS} there are  
\lq \lq boosts" from levels $m$ to $n$ for certain $m$ which do not divide $n$. 

Consider a hypothetical situation where we have 
an $f$ with $f(a) = a$ for some $a \in X$. As above, by abuse of notation we use $a$ to denote the function with 
$a(x) = a$ for all $x$. Note that whenever $f^m(x)=a$ then
$f^{m + 1}(x) = f(f^{m}(x)) = f(a) = a$ too. Suppose, for
example,  that $x$ is a root of
$f$ at $a$ of least period $4$. Then $f^4(x) = a$ but $f^j(x) \neq a$ for 
any $j< 4$. But  $f^5(x)= f(a)= a$, and of course $4 \not | \  5$. 
Theorem 2.1 of \cite{BJS} (about which we will say more below)
implies that this kind of boosting
extends to Nielsen classes in a natural way. So then in \cite{BJS} the class at level 
5 could be  geometrically reducible for Brown Jiang and Schirmer,
 but  geometrically irreducible in our theory. 

The algebraic side of this type of reducibility for constant maps $a$ 
can also  work. 
To see this, note that $a_*$ is the trivial homomorphism. In this 
case, Definition \ref{boostingfunctions} simply becomes 
$\iota_{m,n}(\alpha) = f_*^{n-m}(\alpha)$, 
and this can be well defined on Reidemeister classes 
even when $m$ does not divide $n$. 
\end{remark}

We wish to say one more thing  about incompatibility of the two 
theories. It pertains to the difficulty  of 
generalizing the theory of roots of iterates of the type  given in 
\cite{BJS},  to coincidences.  
  The problems start with the fundamental result of \cite{BJS} (Theorem 2.1) upon which
their whole paper is based. That Theorem  states that if $R^m$ and $R^n$ are root classes with 
$m<n$, and  if $R^m \cap R^n \neq \emptyset$, then $R^m \subseteq  R^n$. 
Example \ref{nondivide} provides a ready made counter example to a 
 coincidence  generalization of this phenomenon. Recall that
 we constructed a
$q \in P_3(f,g) \cap P_5(f,g)$.  We  contend 
that $0$ and $q$ are Nielsen equivalent at level 3, but not at level 5. To see this let 
$c$ be the linear path from $0$ to $q$. Since neither of the paths
$g^3(c)$ nor $f^3(c)$ traverse $\bar 0$, we have that $g^3(c) \sim f^3(c)$ at level 3. 
On the other hand while $g^5(c)$ does not
traverse $\bar 0$, the path $f^5(c)$ does (in fact $f^5(c)$ is a generator of $\pi_1(S^1)$), 
and so $0$ and $q$ cannot be Nielsen equivalent at level 5.

\section{Open Questions}\label{openquestions}
We conclude the paper with two open questions. The first concerns the
definition of
$M\Phi_n(f,g)$, the second is basically the Wecken question. We comment on
both.
\newtheorem{question}[tpcl]{Open Question}
\begin{question}
Is the set
$$M\Phi_n(f,g) = \min \# \left\{\bigcup_{m|n} P_{m}(f_1,g_1) | f_1 \sim f \mbox{
and } g_1 \sim g\right\}.$$
on manifolds?
\end{question}
The question we are asking is if maps that actually attain the minimum
number at all levels simultaneously would ever
contain \lq \lq stray" coincidences of iterates (like $q$ in Example
\ref{nondivide}) that lie in the intersection of different $P_m(f,g)$.
Our feeling (guess)
is that such points are rare, and that
they do not form essential singleton classes. In other words it is our
feeling that
(again like $q$ in Example \ref{nondivide}) such stay points lie in
Nielsen classes that are
either not essential, or not singletons. In either case we should be able
by some homotopy,
either to
remove them, or move them even slightly,
so that they no longer belong to more than one
of the $P_m(f,g)$.
\begin{question} \label{question2}
Under what conditions are the numbers $NP_n(f,g)$ and
$N\Phi_n(f,g)$ Wecken, so that there exist maps
$ f_1 \sim f \mbox{ and } g_1 \sim g$ such that
$NP_n(f,g)= MP_n(f,g)$ and $N\Phi_n(f,g)= M\Phi_n(f,g)$?
\end{question}
Question \ref{question2} should probably be two questions, one for each
number.
The following example shows that such conditions do exist.
\begin{example}
Let $f, g: T^2 \to T^2$ be maps of the $2$ torus with linearizations $F$
and $G$ respectively, where $F$ and $G$ are as given in Example
\ref{matrixexample1}. Then $f$ and $g$ satisfy the
hypotheses of the last part of Theorem \ref{toral2}, and so
$$N\Phi_n(f^n,g^n) = |\det(F^n - G^n)|= \#(\Phi(F^n,G^n)) =
M\Phi_n(f,g).$$
\end{example}
The second equality holds, since when in situations like this $\det(F^n -
G^n) \neq 0$,
the linear representations of $f$ and $g$ have that all classes are
singletons.
The last part follows, since $N\Phi_n(f^n,g^n) \leq M\Phi_n(f,g) \leq \#(\Phi(F^n,G^n))$.
Part of what makes the example work is that when $fg=gf$, then
$$\Phi (f^n, g^n) = \bigcup_{m \mid n} \Phi(f^m, g^m) =\bigcup_{m \mid n}
P_m(f, g)
\neq \bigsqcup_{m \mid n} P_m(f, g).$$
But this is hard to
generalize. In particular the analogue of the
technique
used in periodic point Wecken theorems (see \cite{jez06}), which produces
periodic points in
empty classes that boost to essential classes at a higher level, will not
work unless the
modified maps commute.

\section*{Appendix: Reduction to the GCD and the circle}
In this section we determine exactly which pairs of maps on the circle are essentially reducible to the gcd. As in the previous sections, we identify a map $f$ on the circle with its integer degree. Our goal is the following theorem:
\begin{theorem}\label{circlegcd}
Given integers $a$ and $b$, the maps $f=a$ and $g=b$ are essentially
reducible to the gcd if and only if $\gcd(a,b) = 1$.
\end{theorem}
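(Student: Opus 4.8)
The plan is to work entirely at the level of $\pi_1(S^1) \cong \Z$, where all the boosting functions are multiplication by integers, and to convert the condition ``essentially reducible to the gcd'' into an explicit divisibility/gcd statement about these integer multipliers. First I would recall that since $S^1$ is a Jiang space, $\Reid(f_*^n, g_*^n) \cong \Z_{|a^n - b^n|}$ (when $a \neq b$; the case $a = b$ is easy and handled separately), and a class is essential precisely when $a^n - b^n \neq 0$. The boosting function $\iota_{m,n}$ is multiplication by $B_{m,n} := \sum_{\ell=0}^{n/m - 1} b^{n - (\ell+1)m} a^{\ell m} = \frac{a^n - b^n}{a^m - b^m}$ (reading this fraction as the obvious polynomial identity), acting $\Z_{|a^m - b^m|} \to \Z_{|a^n - b^n|}$. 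So the image of $\iota_{m,n}$ is the cyclic subgroup of $\Z_{|a^n-b^n|}$ generated by $B_{m,n}$, which has order $|a^m - b^m|$.

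Next I would reformulate ``essentially reducible to the gcd.'' Essential reducibility itself is automatic here by Theorem \ref{thmessred} (the integers $a$, $b$ certainly commute), so the content is the gcd condition: whenever a class at level $n$ reduces to classes at levels $m$ and $k$, it must reduce to a common class at level $d = \gcd(m,k)$. Since we are in the abelian setting with all classes essential, this translates — via the subgroup description above — into the statement that
\[
\im(\iota_{m,n}) \cap \im(\iota_{k,n}) = \im(\iota_{d,n})
\]
inside $\Z_{|a^n - b^n|}$ for all $d = \gcd(m,k)$ with $k,m \mid n$. (The inclusion $\supseteq$ is always true by Lemma \ref{diagrammatic}; the real condition is $\subseteq$.) Because images are cyclic subgroups of a cyclic group, this intersection equals the subgroup generated by $\lcm(B_{m,n}, B_{k,n})$ modulo $|a^n-b^n|$, and I would reduce the whole problem to the numerical identity
\[
\gcd\!\left(\frac{a^n - b^n}{a^m - b^m},\ \frac{a^n - b^n}{a^k - b^k}\right) = \frac{a^n - b^n}{a^d - b^d}
\]
holding for all such $m, k, n$ — equivalently (after dividing $a^n - b^n$ through) to $\lcm(a^m - b^m,\ a^k - b^k) = a^{\gcd(m,k)} - b^{\gcd(m,k)}$, i.e.\ $\gcd(a^m - b^m, a^k - b^k) = a^{\gcd(m,k)} - b^{\gcd(m,k)}$, at least up to the appropriate correction factors.

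For the ``if'' direction ($\gcd(a,b) = 1$), I would prove the number-theoretic identity $\gcd(a^m - b^m,\ a^k - b^k) = a^{\gcd(m,k)} - b^{\gcd(m,k)}$ directly: it is the standard ``GCD of generalized Mersenne-type numbers'' result, provable by a Euclidean-algorithm argument on the exponents ($a^m - b^m$ and $a^k - b^k$ are each divisible by $a^{\gcd(m,k)} - b^{\gcd(m,k)}$, and one runs the exponent subtraction $\gcd(a^m - b^m, a^k - b^k) \mid \gcd(a^{m - k} \cdot(\text{unit}) \cdots)$ using $\gcd(a,b)=1$ to clear the unit factors). This is the step I expect to be the main obstacle — not because it is conceptually deep, but because the coprimality hypothesis is used precisely here, and handling the ``unit'' factors $a^j$, $b^j$ carefully (they are invertible modulo any common divisor of $a^m - b^m$ and $a^k - b^k$ exactly when $\gcd(a,b)=1$) requires some care; this is presumably where the acknowledged help of Nathan Jones enters. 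For the ``only if'' direction, I would show that if $p \mid \gcd(a,b)$ then the identity fails: taking $m, k$ coprime divisors of some $n$ with $mk \mid n$, one computes that $a^m - b^m$, $a^k - b^k$ and $a - b$ all share extra factors of $p$ (since $p \mid a, b$ forces $p^{\min(m,k)} \mid \gcd(a^m-b^m, a^k-b^k)$ while $p \nmid a - b$ in general is false, so instead compare $p$-adic valuations), so that $\gcd(a^m - b^m, a^k - b^k) > a - b$ in absolute value, breaking the required equality of subgroup orders; concretely one can mimic Example \ref{counterexamplept5} with general $a = pa'$, $b = pb'$. Throughout, I would keep the genuinely algebraic content (translating ``reduces to a common essential class'' into ``the intersection of images is the expected image'') cleanly separated from the purely arithmetic lemma, invoking Theorem \ref{toriformula} / Theorem \ref{abelianexactseq} for the former and proving the latter as a standalone number-theoretic claim.
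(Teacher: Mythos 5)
Your reformulation is sound, and your overall strategy reaches the same key equivalence as the paper (namely that, after reducing to $\gcd(k,m)=1$, the condition becomes $\gcd(\iota_{1,k},\iota_{1,m})=1$, equivalently $\gcd(a^m-b^m,a^k-b^k)=|a-b|$), but you get there by a noticeably more direct route. The paper spends a full lemma (Lemma \ref{cyclolemma}) factoring the boost polynomials $\sigma_{p,q}(x)$ into products of cyclotomic polynomials over $\Z[x]$ in order to extract the common factor $p$ with $\iota_{m,n}=p\,\iota_{1,k}$, $\iota_{k,n}=p\,\iota_{1,m}$, $\iota_{1,n}=p\,\iota_{1,k}\iota_{1,m}$ (Lemma \ref{iotalemma}), and only then proves the three-way equivalence of Lemma \ref{equivalent}. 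You sidestep all of that by using the closed form $\iota_{m,n}=(a^n-b^n)/(a^m-b^m)$ together with the elementary fact that in $\Z_N$ the intersection of the subgroups generated by divisors $N/M_1$ and $N/M_2$ is generated by $\lcm(N/M_1,N/M_2)=N/\gcd(M_1,M_2)$. This is a genuine simplification; what it gives up is the explicit polynomial factorization, which the paper also wants for its own sake (it remarks that Lemma \ref{cyclolemma} ``is interesting in its own right''). Your sufficiency direction (the Euclidean algorithm on exponents for $\gcd(a^m-b^m,a^k-b^k)$, or equivalently the multiplicative-order-of-$a/b$ argument mod a common prime $p$) is essentially the same argument the paper runs, including the delicate case $a\equiv b\pmod p$ where one must show $p\mid k$ directly — that is the ``final step'' attributed to Nathan Jones, and your ``unit factors'' remark correctly locates where $\gcd(a,b)=1$ is needed.

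Two small corrections. First, when you pass from ``the intersection of images is generated by $\lcm(B_{m,n},B_{k,n})$'' to the numerical identity, you write $\gcd\!\bigl(\tfrac{a^n-b^n}{a^m-b^m},\tfrac{a^n-b^n}{a^k-b^k}\bigr)=\tfrac{a^n-b^n}{a^d-b^d}$; that should be $\lcm$, which (via $\lcm(N/M_1,N/M_2)=N/\gcd(M_1,M_2)$) is what actually yields $\gcd(a^m-b^m,a^k-b^k)=a^d-b^d$ — your final display is right, but the intermediate line contradicts it. Second, your ``only if'' direction via $p$-adic valuations does work (if $p\mid a,b$ and $m,k\ge 2$ then $v_p(a^m-b^m)\ge m+v_p(a-b)-1>v_p(a-b)$), but it is more machinery than needed: the paper simply observes that every monomial $a^i b^{m-1-i}$ in $\iota_{1,m}$ is divisible by $a$ or by $b$, so any common prime divisor of $a$ and $b$ divides $\iota_{1,m}$ and $\iota_{1,k}$ outright, killing condition (3) of Lemma \ref{equivalent}. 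Finally, your parenthetical that ``the case $a=b$ is easy and handled separately'' deserves more care: if $a=b\neq\pm1$ then every Nielsen number $N(f^n,g^n)$ vanishes, the gcd-reducibility condition is vacuously satisfied, and yet $\gcd(a,b)\neq1$ — so the theorem as literally stated fails in that degenerate case, and neither the paper nor your sketch addresses it; the implicit hypothesis is that the pair is nontrivial enough to have essential classes.
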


Though Theorem \ref{gcdmatrix} is, from one point of view, 
more general than the above theorem
in that it includes all tori, it is, from another point of view, less general in 
that it requires the map $g$ to be invertible. The proof of Theorem \ref{gcdmatrix}, 
which involved factoring certain polynomials, does not generalize to the setting where
$g_*$ is not invertible. So then we need to give a separate proof of this theorem.

Our proof relies heavily on a lemma concerning the factors of the polynomials
 $\iota_{p,q}^{x,1}$ given in the proof of Theorem \ref{gcdmatrix}.
 Since we want to emphasize 
in this proof that the $\iota_{p,q}^{x,1}$ are polynomials in $x$, 
we change notation and denote
$\iota_{p,q}^{x,1}$ by $\sigma_{p,q}(x)$. So then 
\[ \sigma_{p,q}(x) = 1 + x^p + x^{2p} + \dots + x_,^{(q-1)p} \mbox{ and } 
\sigma_{1,q}(x) = 1 + x + \dots + x^{q-1} \]
and both are polynomials in $\Z[x].$
As in Theorem \ref{gcdmatrix}
we can, without loss of generality consider only the case 
when the gcd of the levels to which we consider reductions, is 1. With this in mind, 
our factoring result is as follows:
\begin{lemma}\label{cyclolemma}
When $\gcd(k,m)=1$ with $n=km$, for the above polynomials
$\sigma_{p,q}(x) \in \Z[x]$ there is a polynomial $p(x) \in \Z[x]$ satisfying:
\begin{align} 
\sigma_{k,n}(x) &= p(x) \sigma_{1,m}(x) \label{kndiv} \\
\sigma_{m,n}(x) &= p(x) \sigma_{1,k}(x) \label{mndiv} \\
\sigma_{1,n}(x) &= p(x) \sigma_{1,k}(x) \sigma_{1,m}(x) \label{1ndiv}
\end{align}
\end{lemma}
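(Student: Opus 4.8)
The plan is to exhibit one explicit candidate for $p(x)$ and check that it serves all three equations simultaneously. First I would record the closed form $\sigma_{p,q}(x) = (x^{q}-1)/(x^{p}-1)$, valid whenever $p\mid q$ --- which covers every case needed here, since $1\mid k$, $1\mid m$, $k\mid n$, $m\mid n$ and $n=km$. Solving any one of \eqref{kndiv}, \eqref{mndiv}, \eqref{1ndiv} formally for $p(x)$ produces the same rational function
\[ p(x)=\frac{(x^{n}-1)(x-1)}{(x^{k}-1)(x^{m}-1)}, \]
so the entire content of the lemma is the assertion that this rational function lies in $\Z[x]$; granting that, each of the three identities is a one-line cancellation.

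To prove $p(x)\in\Z[x]$ I would work in the UFD $\Z[x]$. The key input is the classical identity $\gcd(x^{k}-1,x^{m}-1)=x^{\gcd(k,m)}-1$, which here is $x-1$ because $\gcd(k,m)=1$; it follows from running the Euclidean algorithm on the exponents via $x^{a}-1=x^{a-b}(x^{b}-1)+(x^{a-b}-1)$, noting that the $x^{j}-1$ are primitive so that their $\Z[x]$-gcd coincides with their $\Q[x]$-gcd. Hence $\lcm(x^{k}-1,x^{m}-1)=(x^{k}-1)(x^{m}-1)/(x-1)$ in $\Z[x]$. Since $k\mid n$ and $m\mid n$ give $x^{k}-1\mid x^{n}-1$ and $x^{m}-1\mid x^{n}-1$, this lcm divides $x^{n}-1$; that is, $(x^{k}-1)(x^{m}-1)$ divides $(x-1)(x^{n}-1)$ in $\Z[x]$, and the quotient is exactly $p(x)$.

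With $p(x)$ in hand the three equalities are immediate: $p(x)\sigma_{1,m}(x)=\dfrac{(x^{n}-1)(x-1)}{(x^{k}-1)(x^{m}-1)}\cdot\dfrac{x^{m}-1}{x-1}=\dfrac{x^{n}-1}{x^{k}-1}=\sigma_{k,n}(x)$, and symmetrically $p(x)\sigma_{1,k}(x)=\sigma_{m,n}(x)$, while $p(x)\sigma_{1,k}(x)\sigma_{1,m}(x)=\dfrac{(x^{n}-1)(x-1)}{(x^{k}-1)(x^{m}-1)}\cdot\dfrac{x^{k}-1}{x-1}\cdot\dfrac{x^{m}-1}{x-1}=\dfrac{x^{n}-1}{x-1}=\sigma_{1,n}(x)$.

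The only genuine obstacle is the integrality statement $p(x)\in\Z[x]$: the coprimality hypothesis is essential (the rational function need not be a polynomial when $\gcd(k,m)>1$), and it enters precisely through $\gcd(x^{k}-1,x^{m}-1)=x-1$. As an alternative I would note that one can instead expand each $x^{j}-1=\prod_{d\mid j}\Phi_{d}(x)$ into cyclotomic factors and verify that every $\Phi_{d}$ appears in the numerator of $p(x)$ to at least the multiplicity it has in the denominator; the bookkeeping there collapses to the remark that a divisor $d>1$ of $n=km$ with $\gcd(k,m)=1$ divides at most one of $k$ and $m$.
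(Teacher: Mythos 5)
Your argument is correct, and it takes a genuinely different (and more economical) route than the paper's. You identify $\sigma_{p,q}(x)=(x^q-1)/(x^p-1)$ (valid since $p\mid q$ in every case used), isolate the single candidate $p(x)=\dfrac{(x^n-1)(x-1)}{(x^k-1)(x^m-1)}$, and reduce all three identities to the integrality of this rational function. The integrality then falls out of the classical fact $\gcd(x^k-1,x^m-1)=x^{\gcd(k,m)}-1$ in $\Z[x]$ together with the UFD relation $\gcd\cdot\lcm = $ product: coprimality of $k,m$ gives $\gcd = x-1$, so $\lcm(x^k-1,x^m-1)=(x^k-1)(x^m-1)/(x-1)$, and since both $x^k-1$ and $x^m-1$ divide $x^n-1$ (because $k,m\mid n$), this lcm divides $x^n-1$. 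Your side remark about primitivity and Gauss's lemma correctly justifies passing between $\Q[x]$- and $\Z[x]$-gcds, and the Euclidean step $x^a-1=x^{a-b}(x^b-1)+(x^{a-b}-1)$ is the right engine for the exponent gcd. The paper instead expands everything into cyclotomic factors $\Phi_d$, invokes a lemma of Cheng--McKay--Wang to rewrite $\Phi_c(x^k)$, and then matches two index sets $R=S$ by a small combinatorial argument, eventually exhibiting $p(x)$ as the explicit product $\prod \Phi_r(x)$ over $r=ab$ with $a\mid m$, $b\mid k$, $a,b\neq 1$. The trade-off: your proof is shorter, self-contained, and needs only standard UFD/Euclidean facts, whereas the paper's proof delivers additional structural information (the exact cyclotomic factorization of $p(x)$) that it does not actually use for this lemma, and at the cost of importing an external result. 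For the statement as written, your approach is cleaner; the paper's is better if one later wants to reason about the factors of $p(x)$ themselves.

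One tiny point of hygiene: the symbol $p$ is already overloaded in this appendix (a prime appears as $p$ in the proof of Theorem~\ref{circlegcd}), and your $p(x)$ inherits that clash from the paper, so no correction is needed, but it is worth being alert that the $p$ in $p(x)$ and the $p$ in $\sigma_{p,q}$ are unrelated.
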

\begin{proof}
Our approach is to fully factor $\sigma_{1,k}$, $\sigma_{m,n}$
and $\sigma_{k,n}$ in $\Z[x]$, and compare the factors.  Consider the two
factorizations of $x^k -1$:
\[ x^k - 1 = \prod_{d\mid k} \Phi_d(x) = (1 - x)\sigma_{1,k}(x), \]
where the $\Phi_d(x)$ are the cyclotomic polynomials (see
\cite{lang}). Cancelling $x-1$ we have that $ \sigma_{1,k}(x)$ is the unique product
\begin{equation}\label{1kfactors} \sigma_{1,k}(x) = \prod_{d \mid k,\, d \neq 1} \Phi_d(x). \end{equation}
of distinct (non-repeating) irreducible polynomials over the UFD $\Z[x]$.

We next factorize $\sigma_{k,n}$ as follows:
\[ \sigma_{k,n}(x) = 1 + x^k + x^{2k} + \dots + x^{(m-1)k} =
\sigma_{1,m}(x^k) = \prod_{c \mid m,\, c \neq 1}
\Phi_c(x^k). \]

Lemma 1 of \cite{cmw95} states that when $\gcd(c,k) = 1$, we have
\[ \Phi_c(x^k) = \prod_{\lcm(r,k)=ck} \Phi_r(x), \]
so then  our complete factorization of $\sigma_{k,n}(x)$ is:
\[
\sigma_{k,n}(x) = \prod_{c\mid m,\, c \neq 1} \prod_{\lcm(r,k)=ck}
\Phi_r(x).
\]

We need an alternative characterization of this product: So let
%
$$\mbox{} R =\{r| \mbox{ there exists } c \neq 1 \mbox { with } c \mid m \mbox{ and } \lcm(r,k) = ck \}$$ $$ \mbox{ and let }
S =\{ ab | \ \ a|m \mbox{ and } b|k \mbox{ and } a \neq 1 \}.$$
We claim that $R = S$.
To show that $R \subseteq S$, let $r \in R$ together with a chosen $c\mid m$ with 
$ck = \lcm(r,k)$ Since $ \lcm(r,k) = rk/\gcd(r,k)$, then
$c = r/\gcd(r,k)$ and so $r = c \gcd(r,k)$. 
So if we let $a= c$, then $a \mid m$ and if $b=\gcd(r,k) \mid k$,
then since $c$ is assumed to be nontrivial, 
we have $a \neq 1$, and thus
$r = ab\in S$.

Now we show  $S \subseteq R$: Let $r \in S$ with
$r = ab$ with $a$ and $b$ factors of $m$ and $k$ respectively such that 
$a\neq 1$. Let $c = a$, and then we automatically have $c \mid m$ and
$c \neq 1$. It remains to show $\lcm(r,k)=ck$. Since $k$ and $m$ have no
common divisors, we have $\gcd(r,k) = b$. Then 
\[ \lcm(r,k) = \frac{rk}{\gcd(r,k)} = \frac{rk}{b} = \frac{r}{b}k = ak
= ck \]
as desired. 

This now allows us to write $\sigma_{k,n}(x)$ as:
\[ \sigma_{k,n}(x) = \mathop{\prod_{r =ab}}_{a \mid m, b
\mid k, a \neq 1} \Phi_r(x). \ \ \mbox{ Similarly } 
\sigma_{m,n}(x) = \mathop{\prod_{r =ab}}_{a \mid m, b
\mid k, b \neq 1} \Phi_r(x).
\]

By uniqueness of the factorizations above it is clear that any factor of $\sigma_{1,k}(x)$ appearing
in \eqref{1kfactors},  is also a factor of $\sigma_{m,n}(x)$ (one in
which $a = 1$). Thus we
have $\sigma_{1,k} \mid \sigma_{m,n}$, with quotient
\[ p(x) = \sigma_{m,n}(x) / \sigma_{1,k}(x) =
\mathop{\mathop{\prod_{r=ab}}_{a\mid k, b\mid m}}_{a \neq 1, b \neq
1} \Phi_r(x). \]

We have established \eqref{kndiv}, and by symmetry that 
\[ \sigma_{k,n}(x) / \sigma_{1,m}(x) = p(x) \]
This establishes \eqref{mndiv}. 

For statement (4), observe that 
\[ \sigma_{1,n}(x) = \prod_{r \mid n, r \neq 1} \Phi_r(x) =
\mathop{\prod_{r = ab}}_{a \mid k, b \mid m} \Phi_r(x). \]
As above,  we see that any factor of $\sigma_{1,k}(x)$ appearing
in \eqref{1kfactors},  is a factor in this last product (one in which $b= 1$). 
Similarly any factor of $\sigma_{1,m}(x)$ is a factor in this last product in which $a = 1$. 
Thus
\[ \frac{\sigma_{1,n}(x)} {\sigma_{1,k}(x) \sigma_{1,m}(x)} = p(x), \]
and this establishes \eqref{1ndiv}.
\end{proof}

Lemma \ref{cyclolemma} is interesting in its own right because it provides a strategy for factoring the algebraic boosts in periodic points theory (in that setting, the algebraic boost $\Reid(f^p) \to \Reid(f^q)$ is exactly $\sigma_{p,q}(f)$).  
As with maps $f$ and $g$ of $S^1$,  we can 
identify the $\iota_{p,q}$ with  multiplication by some integer, 
and for the rest of the section we identify each
$\iota_{p,q}$ with its corresponding integer.  
We do as follows:
Let $f=a$ and $g=b$ then 
\[ \iota_{p,q} = a^{q-p} \sigma_{p,q}(b/a). \]
Note that, even though we evaluate $\sigma_{p,q}$ at a non-integer, 
multiplication by $a^{q-p}$ causes the result to be integral. Applying all this evaluation to 
the above lemma gives the following factorization result for the $\iota_{r,s}$: 
\begin{lemma}\label{iotalemma}
When $\gcd(k,m)=1$ with $n=km$, 
there is $p \in \Z$ satisfying:
\begin{align*} 
\iota_{k,n} &= p \iota_{1,m} \\
\iota_{m,n} &= p \iota_{1,k} \\
\iota_{1,n} &= p \iota_{1,k} \iota_{1,m}
\end{align*}
\end{lemma}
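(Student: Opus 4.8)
The plan is to push the polynomial identities of Lemma~\ref{cyclolemma} through the substitution $x = b/a$ and keep careful track of the powers of $a$. Recall that with $f = a$ and $g = b$ one has $\iota_{p,q} = a^{q-p}\sigma_{p,q}(b/a)$; expanding, $a^{q-p}\sigma_{p,q}(b/a) = \sum_{i} a^{\,q-p-pi} b^{\,pi}$ is an honest polynomial in $a$ and $b$ with non-negative exponents, so every identity we produce is an identity in $\Z[a,b]$ (and in particular the degenerate case $a = 0$ needs no separate argument, being read off directly).

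First I would compute the degree of the polynomial $p(x) \in \Z[x]$ supplied by Lemma~\ref{cyclolemma}. Since $n = km$, comparing degrees in \eqref{kndiv} gives $\deg p(x) = (n-k)-(m-1) = km-k-m+1 = (k-1)(m-1)$, and the same value drops out of \eqref{mndiv} and \eqref{1ndiv}. The crucial observation is that Lemma~\ref{cyclolemma} furnishes \emph{one and the same} polynomial $p(x)$ in all three identities (its explicit cyclotomic form $\prod_r \Phi_r(x)$ is symmetric in $k$ and $m$), so the single integer
\[ p := a^{(k-1)(m-1)}\, p(b/a) \]
is a candidate that serves all three; and because $\deg p(x) = (k-1)(m-1)$, this $p$ is indeed an integer.

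It then remains to substitute and rebalance exponents. From \eqref{kndiv},
\[ \iota_{k,n} = a^{n-k}\,\sigma_{k,n}(b/a) = a^{n-k}\, p(b/a)\,\sigma_{1,m}(b/a) = \bigl(a^{(k-1)(m-1)}p(b/a)\bigr)\bigl(a^{m-1}\sigma_{1,m}(b/a)\bigr) = p\,\iota_{1,m}, \]
using $n-k = (k-1)(m-1)+(m-1)$ and $\iota_{1,m} = a^{m-1}\sigma_{1,m}(b/a)$. The identical bookkeeping applied to \eqref{mndiv} (with $n-m = (k-1)(m-1)+(k-1)$) yields $\iota_{m,n} = p\,\iota_{1,k}$, and applied to \eqref{1ndiv} (with $n-1 = (k-1)(m-1)+(k-1)+(m-1)$) yields $\iota_{1,n} = p\,\iota_{1,k}\,\iota_{1,m}$. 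The only nontrivial point in the whole argument is the degree count identifying a common $p$ and guaranteeing its integrality; everything past Lemma~\ref{cyclolemma} is a direct evaluation, so I do not expect any real obstacle here.
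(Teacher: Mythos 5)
Your argument is correct and follows essentially the same route the paper intends: the paper states Lemma~\ref{iotalemma} as an immediate consequence of evaluating Lemma~\ref{cyclolemma} at $x=b/a$ and clearing denominators by powers of $a$, and offers no further proof. You have simply supplied the bookkeeping the paper omits, in particular the degree count $\deg p(x) = (k-1)(m-1)$ which both identifies a common integer $p = a^{(k-1)(m-1)}p(b/a)$ in all three identities and guarantees its integrality.
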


We use these factorizations to prove the following lemma.
\begin{lemma}\label{equivalent}
Let $n=km$ with $\gcd(k,m) = 1$.
Given integers $a$ and $b$ and maps
$f=a$ and $g=b$, the following are equivalent:
\begin{enumerate} 
\item \label{reds}  Two arbitrary classes $[w]^m$ and $[z]^k$ at levels $m$ and $k$ 
respectively, which boost to the 
same class $[\iota_{m, n}w]^{n} = [\iota_{k, n}z]^{mn}$ reduce to level 1. 
\item \label{lcms} $\lcm(\iota_{m,n}, \iota_{k,n})= \iota_{1,n}$.
\item \label{gcds} $\gcd(\iota_{1,k},\iota_{1,m})=1$.
\end{enumerate}
\end{lemma}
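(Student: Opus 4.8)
The plan is to prove the cycle of implications \ref{reds} $\Rightarrow$ \ref{lcms} $\Rightarrow$ \ref{gcds} $\Rightarrow$ \ref{reds}, exploiting the factorization $\iota_{k,n}=p\,\iota_{1,m}$, $\iota_{m,n}=p\,\iota_{1,k}$, $\iota_{1,n}=p\,\iota_{1,k}\iota_{1,m}$ of Lemma \ref{iotalemma} throughout. For the implication \ref{gcds} $\Rightarrow$ \ref{lcms}: since $\gcd(\iota_{1,k},\iota_{1,m})=1$, we have $\lcm(\iota_{1,k},\iota_{1,m})=\iota_{1,k}\iota_{1,m}$, and then multiplying through by the common factor $p$ gives $\lcm(p\,\iota_{1,m},\,p\,\iota_{1,k})=p\,\iota_{1,k}\iota_{1,m}$, i.e.\ $\lcm(\iota_{m,n},\iota_{k,n})=\iota_{1,n}$, which is exactly \ref{lcms}. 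The only subtlety here is the elementary identity $\lcm(pa,pb)=p\cdot\lcm(a,b)$ when $\gcd(a,b)=1$ (more generally $\lcm(pa,pb)=p\cdot\lcm(a,b)$ always, but we only need the coprime case), which is a routine fact about integers.

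For \ref{lcms} $\Rightarrow$ \ref{gcds}: again write $\iota_{m,n}=p\,\iota_{1,k}$ and $\iota_{k,n}=p\,\iota_{1,m}$. Using $\lcm(pa,pb)=p\cdot\lcm(a,b)$ and the general relation $\gcd(a,b)\lcm(a,b)=ab$, the hypothesis $\lcm(p\,\iota_{1,k},p\,\iota_{1,m})=p\,\iota_{1,k}\iota_{1,m}$ forces $\lcm(\iota_{1,k},\iota_{1,m})=\iota_{1,k}\iota_{1,m}$, hence $\gcd(\iota_{1,k},\iota_{1,m})=1$. (One should be mildly careful that these are being computed as residues modulo $|a^n-b^n|$ versus as genuine integers; the cleanest route is to do all the arithmetic with the integer representatives $\iota_{r,s}\in\Z$ coming from the formula $\iota_{p,q}=a^{q-p}\sigma_{p,q}(b/a)$, and only pass to the Reidemeister group at the end.)

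For the geometric/algebraic equivalence \ref{reds} $\Leftrightarrow$ \ref{lcms}: the key translation is that, since $\pi_1(S^1)\cong\Z$ is abelian and all boosting functions are homomorphisms (multiplication by an integer on $\Z_{|a^n-b^n|}$, via Theorem \ref{abelianexactseq}), the statement ``classes $[w]^m$ and $[z]^k$ with $\iota_{m,n}(w)=\iota_{k,n}(z)$ in $\Reid(f_*^n,g_*^n)$ must both reduce to level $1$'' says exactly that $\im(\iota_{m,n})\cap\im(\iota_{k,n})\subseteq\im(\iota_{1,n})$, and since the reverse inclusion is automatic from Lemma \ref{diagrammatic} (or directly from $\iota_{1,n}=\iota_{m,n}\iota_{1,m}=\iota_{k,n}\iota_{1,k}$), this is the equality $\im(\iota_{m,n})\cap\im(\iota_{k,n})=\im(\iota_{1,n})$. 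In the cyclic group $\Z_N$ with $N=|a^n-b^n|$, the image of multiplication by $c$ is the subgroup of order $N/\gcd(N,c)$ generated by $\gcd(N,c)$, and the intersection of the subgroups generated by $\iota_{m,n}$ and $\iota_{k,n}$ is the subgroup generated by their lcm; so $\im(\iota_{m,n})\cap\im(\iota_{k,n})=\im(\iota_{1,n})$ is equivalent to $\lcm(\iota_{m,n},\iota_{k,n})\equiv\iota_{1,n}$ up to associates mod $N$, which is \ref{lcms}. I also need to check the ``reduce to level $1$'' half carefully: a class at level $m$ whose boost lands in $\im(\iota_{1,n})$ need not a priori come from a level-$1$ class, but using on-the-nose boosting (Lemma \ref{noseboost}) together with the factorization $\iota_{m,n}=\iota_{1,n}\cdot(\text{something})$ — more precisely $\iota_{1,n}=p\,\iota_{1,k}\iota_{1,m}$ and $\iota_{1,m}$ surjects appropriately — lets one lift; this bookkeeping with $p$ and the $\iota_{1,\ast}$ as honest integers is where the argument is most delicate.

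The step I expect to be the main obstacle is precisely this last reconciliation: translating the ``reduces to level $1$'' condition in \ref{reds} into the clean subgroup-intersection statement, because it requires knowing that a class at level $m$ whose image lies in $\im(\iota_{1,n})$ actually factors through $\iota_{1,m}$ (not merely that some class at level $1$ maps to the same thing at level $n$). On $S^1$ this follows because everything is multiplication on a cyclic group and one can track divisibilities explicitly via Lemma \ref{iotalemma}, but writing it without circularity — i.e.\ not secretly assuming \ref{gcds} — needs care. Once that translation is in hand, the remaining implications are pure elementary number theory about $\gcd$, $\lcm$, and the common factor $p$.
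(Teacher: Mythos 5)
Your proposal is correct and follows essentially the same route as the paper: translate condition \ref{reds} into the subgroup equality $\im(\iota_{m,n})\cap\im(\iota_{k,n})=\im(\iota_{1,n})$ in the cyclic Reidemeister group, then use the Lemma \ref{iotalemma} factorization $\iota_{m,n}=p\,\iota_{1,k}$, $\iota_{k,n}=p\,\iota_{1,m}$, $\iota_{1,n}=p\,\iota_{1,k}\iota_{1,m}$ to reduce \ref{lcms}$\Leftrightarrow$\ref{gcds} to the elementary identity $\lcm(pa,pb)=p\,\lcm(a,b)$. The ``delicate'' lift you flag in passing from \ref{reds} to the image statement is resolved more directly than your Lemma \ref{noseboost} workaround: the boosting maps $\iota_{m,n}$ on the finite cyclic Reidemeister groups are injective whenever the corresponding integers are nonzero (the paper simply invokes ``the $\iota$ are injective''), so from $\iota_{m,n}[w]^m=\iota_{1,n}[v]^1=\iota_{m,n}\iota_{1,m}[v]^1$ one cancels $\iota_{m,n}$ to get $[w]^m=\iota_{1,m}[v]^1$; there is no circularity because this injectivity does not depend on \ref{gcds}.
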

\begin{proof} Note that the first condition of the lemma is equivalent to saying that
$\im(\iota_{m,n}) \cap \im(\iota_{k,n})= \im(\iota_{1,n})$.  
Now the $\iota$ are injective, and 
$\im(\iota_{m,n})$ is a subgroup of ${\cal R}(a^n,b^n)$ of order $|b^m - a^m|$,
 generated by the integer $\iota_{m,n}$. Similarly
$\im(\iota_{k,n})$ is the subgroup of ${\cal R}(a^n,b^n)$ of order $|b^k - a^k|$, 
generated by the integer $\iota_{k,n}$. 
So $\im(\iota_{m,n}) \cap \im(\iota_{k,n})$ is the subgroup of ${\cal R}(a^n,b^n)$ generated 
by the integer 
$\lcm(\iota_{m,n}, \iota_{k,n})$. 
On the other hand $\im(\iota_{1,n})$ is the subgroup of ${\cal R}(a^n,b^n)$ generated by 
$\iota_{1,n}$. The first equivalence follows. 

To prove the equivalence of the second two statements,  consider the second statement.
By Lemma \ref{iotalemma},  the integers 
$\iota_{m,n}$ and $\iota_{k,n}$ have a
common factor $p$, with respective quotients $\iota_{1,k}$ and
$\iota_{1,m}$. Thus 
\[ \lcm(\iota_{m,n}, \iota_{k,n}) = p \lcm(\iota_{1,k},
\iota_{1,m}), \]
and the second statement is equivalent to $p \lcm(\iota_{1,k},\iota_{1,m}) =
\iota_{1,n}$. But Lemma \ref{iotalemma} also gives $\iota_{1,n} =
p\iota_{1,k}\iota_{1,m}$, and so the second statement is equivalent to $\lcm(\iota_{1,k}, \iota_{1,m}) =
\iota_{1,k}\iota_{1,m}$, which is to say that $\gcd(\iota_{1,k},
\iota_{1,m}) = 1$. 
\end{proof}

We are now ready to prove the main result of this section.
\begin{proof}[Proof of Theorem \ref{circlegcd}]
First we assume that $\gcd(a,b) \neq 1$, and show that $f$ and $g$ are not
essentially reducible to the gcd. If $a$ and $b$ have a nontrivial common
divisor, then $\iota_{1,m}$ 
and $\iota_{1,k}$ will share this divisor as well for any $m$ and
$k$. This is because each $\iota$ is a sum of terms, each of which is
divisible by $a$ or $b$. Thus we have
$\gcd(\iota_{1,k}, \iota_{1,m}) \neq 1$ for any $k$ and $m$. In
particular we can choose $k$ and $m$ to be relatively prime, and then
Lemma \ref{equivalent} will apply to show that $f$ and $g$ do not reduce to
the gcd at level $n=km$. 

For the converse, assume that $f$ and $g$ are not essentially
reducible to the gcd, and that $\gcd(a,b) = 1$. We deduce a contradiction.
    Since $\gcd(a,b) = 1$ if and only if $\gcd(a^d,b^d)= 1$ for all
positive integers $d$, then by the same argument used
in the proof of Theorem 
 \ref{gcdmatrix}, we may assume, without loss of generality, 
that the failure to reduce comes
at levels $k$ and $m$ with $\gcd(k,m)= 1$.  

By Lemma \ref{equivalent},  we have that $\iota_{1,k}$ and $\iota_{1,m}$
have a common prime factor $p$. It is easy to see that 
\[ (a-b)\iota_{1,k} = a^k - b^k, \text{ and } (a-b)\iota_{1,m} = a^m - b^m. \]
Thus $p$ divides $a^k - b^k$ and $a^m - b^m$, and so
\[ a^k = b^k \mod p, \text{ and } a^m = b^m \mod p. \]

Now since $\gcd(a,b) = 1$, the prime $p$ cannot divide both $a$
and $b$. Thus one of $a$ and $b$ is invertible mod $p$, without loss
of generality we assume that $b$ is invertible mod $p$. Then the above
can be written
\[ (a/b)^k = 1 \mod p, \text{ and } (a/b)^m = 1 \mod p. \]
Thus both $k$ and $m$ are divisible by the order of the element $a/b$
in the multiplicative group $\Z_p^*$. If this order is not 1, then
this will contradict the assumption that $\gcd(k,m) = 1$.

It remains to consider the case where the order of $a/b$ is 1, which
is to say that $a=b \mod p$. In this case, the definition of
$\iota_{1,k}$ simplifies modulo $p$:
\[ \iota_{1,k} = ka^{k-1} = kb^{k-1} \mod p, \]
and since $p \mid \iota_{1,k}$, we have $p \mid ka^{k-1}$ and $p \mid
kb^{k-1}$. Since $p$ is prime, either $p$ divides $k$, or $p$ divides
both of $a$ and $b$. But $\gcd(a,b) = 1$, and so we conclude that $p$
divides $k$. 
The same argument of the previous paragraph applied to $\iota_{1,m}$
shows that $p$ also divides $m$, which contradicts the assumption
that $\gcd(k,m)=1$.
\end{proof}

\end{document}